\documentclass[11pt]{amsart}
\usepackage[all]{xy}
\usepackage{amssymb}
\usepackage{enumitem}
\usepackage{amsthm}
\usepackage{colortbl}
\usepackage[dvipsnames]{xcolor}
\usepackage{hyperref}
\hypersetup{colorlinks=true,linkcolor=blue,citecolor=magenta}
\usepackage{amsmath}
\usepackage{amscd,enumitem}
\usepackage{verbatim}
\usepackage{eurosym}
\usepackage{float}
\usepackage{cleveref}
\usepackage{color}
\usepackage{rotating}
\usepackage{dcolumn}
\usepackage[mathscr]{eucal}
\usepackage[all]{xy}
\usepackage{hyperref}
\usepackage{bbm}
\usepackage{makecell}
\usepackage{wasysym}
\usepackage[margin=1 in]{geometry} 
\usepackage{tikz}
\usepackage{graphicx}
\newcommand\myhdots{\hbox to 1.5em{.\hss.\hss.}}
\usetikzlibrary{automata,arrows,positioning,calc}
\DeclareFontFamily{U}{wncy}{}
    \DeclareFontShape{U}{wncy}{m}{n}{<->wncyr10}{}
    \DeclareSymbolFont{mcy}{U}{wncy}{m}{n}
    \DeclareMathSymbol{\Sh}{\mathord}{mcy}{"58} 
\newtheorem*{thm*}{Theorem}
\newtheorem*{conj*}{Conjecture}

\newtheorem*{remark}{Remark}

\newtheorem{theorem}{Theorem}[section]

\newtheorem{lemma}[theorem]{Lemma}

\newtheorem{proposition}[theorem]{Proposition}

\newtheorem{corollary}[theorem]{Corollary}

\theoremstyle{definition}
\newtheorem{definition}[theorem]{Definition}
\newtheorem{example}[theorem]{Example}

\newcommand{\Q}{\mathbb{Q}}

\newcommand{\NN}{\mathcal{N}}

\newcommand{\pp}{\mathfrak{p}}

\newcommand{\height}{\mathrm{ht}}
\newcommand{\Tam}{\mathrm{Tam}}

\newcommand{\floor}[1]{\lfloor #1 \rfloor}
\newcommand{\ul}{\underline}
\newcommand\mydots{\vbox to 2em{.\vss.\vss.}}
\numberwithin{equation}{section}



\author{Yunseo Choi}
\address{Department of Mathematics and Physics, Harvard College, Cambridge, MA 02138}
\email{ychoi@college.harvard.edu}

\author{Sean Li}
\address{Department of Mathematics, Massachusetts Institute of Technology, Cambridge, MA 02139}
\email{seanjli@mit.edu}

\author{Apoorva Panidapu}
\address{Department of Mathematics, San Jose State University, San Jose, CA 95112, USA}
\email{apoorva.panidapu@sjsu.edu}

\author{Casia Siegel}
\address{Department of Mathematics, University of Virginia, Charlottesville, VA 22904, USA}
\email{cls8az@virginia.edu}

\title{Tamagawa Products for Elliptic Curves Over Number Fields}

\begin{document}

\maketitle

\begin{abstract}
In recent work, Griffin, Ono, and Tsai constructs an $L-$series to prove that the proportion of short Weierstrass elliptic curves over $\mathbb{Q}$ with trivial Tamagawa product is $0.5054\dots$ and that the average Tamagawa product is $1.8183\dots$. Following their work, we generalize their $L-$series over arbitrary number fields $K$ to be
\[L_{\mathrm{Tam}}(K, s):=\sum_{m=1}^{\infty}\frac{P_{\mathrm{Tam}}(K, m)}{m^s},\]
where $P_{\mathrm{Tam}}(K,m)$ is the proportion of short Weierstrass elliptic curves over $K$ with Tamagawa product $m$. We then construct Markov chains to compute the exact values of $P_{\mathrm{Tam}}(K,m)$ for all number fields $K$ and positive integers $m$. As a corollary, we also compute the average Tamagawa product $L_{\mathrm{Tam}}(K,-1)$. We then use these results to uniformly bound $P_{\mathrm{Tam}}(K,1)$ and $L_{\mathrm{Tam}}(K,-1)$ in terms of the degree of $K$. Finally, we show that there exist sequences of $K$ for which $P_{\mathrm{Tam}}(K,1)$ tends to $0$ and $L_{\mathrm{Tam}}(K,-1)$ to $\infty$, as well as sequences of $K$ for which $P_{\mathrm{Tam}}(K,1)$ and $L_{\mathrm{Tam}}(K,-1)$ tend to $1$.

\end{abstract}

\section{Introduction} 
\label{intro}

Although there are no elliptic curves $E/\mathbb{Q}$ with everywhere good reduction, \textit{Tamagawa trivial curves} satisfy $[E(\mathbb{Q}_p) : E_0(\mathbb{Q}_p)] = 1$ for all primes $p$, where $E_0(\mathbb{Q}_p)$ is the subgroup consisting of the nonsingular points of $E(\mathbb{Q}_p)$ after reduction modulo $p$. For example, the curve
$$ E/\mathbb{Q} : y^2 = x^3 + 3x + 1,$$
which has discriminant $-2^4 \cdot 3^3 \cdot 5$, satisfies $[E(\Q_2) : E_0(\Q_2)] = [E(\Q_3) : E_0(\Q_3)] = [E(\Q_5) : E_0(\Q_5)] = 1$. In recent work, Griffin, Ono, and Tsai \cite[Corollary 1.2]{og} prove that when the elliptic curves in short Weierstrass form  are ordered by height, the proportion of elliptic curves that are Tamagawa trivial is $0.5054\ldots$. 

For every elliptic curve over $\Q$, we associate the {\it Tamagawa product}
\[
\Tam(E/\Q) := \prod_{p \text{ prime}} c_p,
\]
where $c_p := [E(\mathbb{Q}_p) : E_0(\mathbb{Q}_p)]$ is the {\it Tamagawa number} at $p$. Then $E/\Q$ is Tamagawa trivial if and only if $\Tam(E/\Q) = 1$. It is known that $\Tam(E/\Q)$ can be arbitrarily large (see, for instance,~\cite[Table C.15.1]{Silverman}), and so it is natural to ask whether there is an average Tamagawa product for $E/\Q$. The numerics by Balakrishnan et al. ~\cite[Figure A.14]{Balakrishnan4} suggest that the average Tamagawa product over $\Q$ exists and is in the neighborhood of $1.82$. This speculation was confirmed by Griffin et al.~\cite[Theorem 1.3]{og}, who constructed a new $L$-function and proved that the exact average is $L_\Tam(-1) = 1.8183\dots$.

It is then natural to ask about the values of the same arithmetic statistics over an arbitrary number field $K$. To this end, we define the Tamagawa product $\Tam(E/K)$ for an elliptic curve $E/K$. We let $\pp$ be a prime ideal of $\mathcal{O}_K$, the ring of integers of $K$, that lies above a rational prime $p$. Recall that there is a unique extension $v := v_{\mathfrak{p}}$ to $K$ corresponding to $\pp$. We let $K_v$ be the completion of $K$ with respect to $v$. Then the Tamagawa product for elliptic curves $E/K$ is
\begin{equation*}
\Tam(E/K) := \prod_{\mathfrak{p}} c_\pp,
\end{equation*}
where $c_\pp := [E(K_v) : E_0(K_v)]$ is the Tamagawa number at $\pp$. 

Generalizing the work of Griffin et al.~\cite{og}, we compute the arithmetic statistics of Tamagawa products over arbitrary number fields $K$. Specifically, we compute the proportion of curves with fixed $\Tam(E/K)$ over short Weierstrass curves
\begin{equation*}
    E = E(a_4, a_6) : y^2 = x^3 + a_4 x + a_6,
\end{equation*}
where $a_4, a_6 \in \mathcal{O}_{K_v}.$ To compute the proportion of curves with fixed $\Tam(E/K)$, we require a consistent way to count sets of elliptic curves. To do so, we order $E/K$ by their naive height. Recall \cite{CREMONA200642} that the naive height of $E/K$ is
\[ \height(E/K) := \prod_{\pp \in M_K}\max\left\{4|a_4|_\pp^3, 27|a_6|^2_\pp, 1 \right\},\] 
where $M_K$ contains all Archimedean and non-Archimedean places on $K.$ To count the number of $E/K$ with height $\leq X$, we introduce: 
\begin{equation*}
    \NN(K, X) := \#\{E := E(a_4, a_6) : \height(E/K) \le X \}.
\end{equation*} Similarly, to count the number of $E/K$ with Tamagawa product $m$ and height $\leq X$, we define 
\begin{equation*}
    \NN_{m}(K, X) := \#\{E := E(a_4, a_6) : \height(E/K) \le X \textrm{ with } \Tam(E/K) = m\}.
\end{equation*}
We now formally define the proportion of elliptic curves $E(a_4, a_6)$ with Tamagawa number $m$ to be 
\begin{equation*}
\label{eq:count}
P_{\Tam}(K,m):=\lim_{X\rightarrow +\infty}\frac{\NN_{m}(K, X)}{\NN(K, X)}.
\end{equation*}

We compute the global statistic $P_{\Tam}(K,m)$ by computing the local statistics of Tamagawa numbers at each $\mathfrak{p}.$ Namely, we let $\delta_{K,\pp}(c)$ be the local proportion of elliptic curves with Tamagawa number $c$ at $\mathfrak{p}$ when the elliptic curves are ordered by height. The exact values of $\delta_{K,\pp}(c)$ are given in Propositions \ref{p=5_total}, \ref{p=3_total}, and \ref{p=2_total}. Using $\delta_{K,\pp}(c)$, we define an analogue of the $L-$function as presented in \cite{og}: 

\begin{equation*}
    L_{\Tam}(K,s)= \prod_{\mathfrak{p}} \left(\frac{\delta_{K,\mathfrak{p}}(1)}{1^s}+\frac{\delta_{K,\mathfrak{p}}(2)}{2^s}+\frac{\delta_{K,\mathfrak{p}}(3)}{3^s}+\dots\right). 
\end{equation*}
\begin{remark}
All of the counts in this paper assume that the elliptic curves are ordered by height. But the congruence conditions are over bounded powers of $\pi$ which are pairwise relatively prime for different prime ideals, so we can compute the $L$-series as the product above.
\end{remark}

Our first result is that $P_{\mathrm{Tam}}(K, m)$ are the Dirichlet coefficients of $L_{\mathrm{Tam}}(K, m)$.


\begin{theorem} 
\label{l-series}
If $K$ is a number field, then $P_\Tam(K,m)$ are the Dirichlet coefficients of
 \[L_{\Tam}(K,s)=\sum_{m=1}^{\infty}\frac{P_{\Tam}(K,m)}{m^s}.\]
\end{theorem}
\begin{remark}
\Cref{l-series} gives $P_\Tam(K,m)$ for all number fields $K$ and every positive integer $m$. In particular, the theorem makes no assumption on the class number $h_K$, the structure of the units in $\mathcal{O}_K^\times$, as well as the possible splitting types of primes in $K$. 
\end{remark}
\begin{corollary} \label{cor}
If $K$ is a number field, then the following are true.
\begin{enumerate}
    \item We have \[P_\Tam(K,1) = \prod_{\pp} \delta_{K,\pp}(1).\]
 
    \item The average Tamagawa product $L_\Tam(K,-1)$ is well-defined by absolute convergence.
\end{enumerate}
\end{corollary}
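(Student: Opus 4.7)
The plan is to read both statements off the Euler product in \Cref{l-series}.

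For Part (1), expanding the Euler product as a formal Dirichlet series and matching the coefficient of $1/1^s$ yields
\[ P_\Tam(K;1) = \prod_p \prod_{\pp \mid (p)} \delta_{K,\pp}(1). \]
The upper bound $P_\Tam(K;1) < 1$ is immediate: any single prime $\pp$ contributes a factor $\delta_{K,\pp}(1) < 1$ by the explicit tables, and all other factors are at most $1$. For strict positivity, I would verify from \Cref{p=5_total} that $1 - \delta_{K,\pp}(1) = O(N\pp^{-2})$ at primes $\pp$ of residue characteristic at least $5$. Intuitively, Tate's algorithm forces $c_\pp > 1$ to require at least two independent conditions on $(a_4, a_6) \bmod \pp$. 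Since $\sum_\pp N\pp^{-2} < \infty$ by comparison with $\zeta_K(2)$, the series $\sum_\pp (1 - \delta_{K,\pp}(1))$ converges, and hence the infinite product converges to a positive limit. The finitely many primes above $2$ and $3$ contribute positive finite factors by \Cref{p=3_total} and \Cref{p=2_total}.

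For Part (2), substituting $s = -1$ into the Euler product gives the formal identity
\[ L_\Tam(K;-1) = \prod_p \prod_{\pp \mid (p)} M_\pp, \quad \text{where} \quad M_\pp := \sum_{c \ge 1} c\, \delta_{K,\pp}(c). \]
All coefficients and local weights are nonnegative, so Tonelli's theorem identifies the Dirichlet sum $\sum_m m\, P_\Tam(K;m)$ with this infinite product as elements of $[0, \infty]$. Thus absolute convergence of the Dirichlet series at $s = -1$ reduces to finiteness of the product. I would extract from \Cref{p=5_total} the estimate $M_\pp = 1 + O(N\pp^{-2})$: the only Kodaira types yielding arbitrarily large Tamagawa numbers are the split multiplicative types $I_n$, whose probability decays at least as fast as $N\pp^{-n}$, so $\sum_{c \ge 2} (c-1)\, \delta_{K,\pp}(c)$ is dominated by a convergent geometric series with leading term of order $N\pp^{-2}$. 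Summing over $\pp$ yields $\sum_\pp (M_\pp - 1) < \infty$, and hence $\prod_\pp M_\pp$ is finite.

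The main obstacle is purely bookkeeping: confirming the uniform asymptotics $\delta_{K,\pp}(1) = 1 - O(N\pp^{-2})$ and $M_\pp - 1 = O(N\pp^{-2})$ from the explicit (and somewhat intricate) tables, with implicit constants independent of $K$. Once these bounds are in hand, both statements follow from the standard criterion that $\prod (1 + \epsilon_\pp)$ converges to a nonzero finite value precisely when $\sum |\epsilon_\pp|$ converges.
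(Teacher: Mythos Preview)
Your proposal is correct and follows essentially the same approach as the paper: both extract the product formula for $P_\Tam(K;1)$ from the Euler product in \Cref{l-series}, then use the bound $1-\delta_{K,\pp}(1)=O(N\pp^{-2})$ for $\pp\nmid(6)$ together with comparison to $\zeta_K(2)$ to establish positivity, and for $L_\Tam(K;-1)$ both use $M_\pp-1=O(N\pp^{-2})$ (the paper via the explicit bound $n\delta_{K,\pp}(n)<n/q^n$) to show the product converges. Your invocation of Tonelli to justify interchanging the Dirichlet sum with the product is a nice touch that the paper leaves implicit.
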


In the following example, we illustrate the results of \Cref{l-series} by computing $P_\Tam(K,1)$ and $L_\Tam(K,-1)$ for all imaginary quadratic fields $\Q(\sqrt{-D})$ with class number 1. For the values of $P_{\Tam}(\Q(\sqrt{-D}),m)$ with $m \geq 2$, refer to \Cref{examples}. For further examples, also refer to \Cref{examples}, where we compute $P_\Tam(K,m)$ and $L_\Tam(K,-1)$ for real quadratic fields $\Q(\sqrt{D})$ with squarefree $D < 10^4$ and a number field with Galois group $S_4$.

\begin{example}
\label{imag_fields}
Tables \ref{trivtable} and \ref{avgtable} illustrate the convergence to $P_\Tam(\Q(\sqrt{-D}), 1)$ and $L_{\Tam}(\Q(\sqrt{-D}), -1)$ for class number 1 quadratic number fields.
\end{example}

{\footnotesize
\begin{center}
\begin{table}[H]
\begin{tabular}{| c | c | c | c | c | c | c | c | c | c |}
\hline
\multicolumn{10}{|c|}{$\NN_{1}(\Q(\sqrt{-D}), X)/\NN(\Q(\sqrt{-D}), X)$} \\
\hline
 $X$ & $\sqrt{-1}$ & $\sqrt{-2}$ & $\sqrt{-3}$ & $\sqrt{-7}$ & $\sqrt{-11}$ & $\sqrt{-19}$ & $\sqrt{-43}$ & $\sqrt{-67}$ & $\sqrt{-163}$   \\ \hline
 
 
 
 $10^4$ & $0.542$ & $0.488$ & $0.663$ & $0.357$ & $0.609$ & $0.620$ & $0.657$ & $0.560$  & $0.450$ \\
 
 $10^5$ & $0.539$ & $0.460$ & $0.665$ & $0.359$ & $0.599$ & $0.678$ & $0.716$ & $0.711$ & $0.636$ \\ 
 
 $10^6$ & $0.528$ & $0.468$ & $0.660$ & $0.343$ & $0.586$ & $0.667$ & $0.726$ & $0.744$ & $0.728$ \\ 
 
 $\vdots$ & $\vdots$ &$\vdots$ &$\vdots$ &$\vdots$ &$\vdots$ &$\vdots$ &$\vdots$ &$\vdots$ &$\vdots$ \\ 
 
 $\infty$ & $0.529$ & $0.468$ & $0.661$ & $0.349$ & $0.581$ & $0.665$ & $0.733$ & $0.750$ & $0.763$ \\
 \hline
\end{tabular}
\caption{Convergence to $P_\Tam(\Q(\sqrt{-D}), 1)$.}
\label{trivtable}
\end{table}
\end{center}
}

{\footnotesize
\begin{center}
\begin{table}[H]
    \centering
    \begin{tabular}{| c | c | c | c | c | c | c | c | c | c |}
    \hline
\multicolumn{10}{|c|}{$\sum_{\height(\mathbb{Q}(\sqrt{-D}), E) \le X}\Tam(\Q(\sqrt{-D})), E)/N(\Q(\sqrt{-D}); X)$} \\
\hline
        $X$ & $\sqrt{-1}$ & $\sqrt{-2}$ & $\sqrt{-3}$ & $\sqrt{-7}$ & $\sqrt{-11}$ & $\sqrt{-19}$ & $\sqrt{-43}$ & $\sqrt{-67}$ & $\sqrt{-163}$   \\ \hline
  $10^4$ & $1.751$ & $2.054$ & $1.589$ & $2.570$ & $1.850$ & $1.718$ & $1.535$ & $1.698$ & $2.017$ \\
  $10^5$ & $1.720$ & $1.979$ & $1.538$ & $2.417$ & $1.763$ & $1.537$ & $1.393$ & $1.403$ & $1.612$ \\
   $10^6$ & $1.708$ & $1.946$ & $1.508$ & $2.418$ & $1.723$ & $1.519$ & $1.361$ & $1.333$ & $1.372$ \\
 
 $\vdots$ & $\vdots$ &$\vdots$ &$\vdots$ &$\vdots$ &$\vdots$ &$\vdots$ &$\vdots$ &$\vdots$ &$\vdots$ \\ 
 
 $\infty$ & $1.678$ & $1.904$ & $1.487$ & $2.376$ & $1.708$ & $1.480$ & $1.331$ & $1.300$ & $1.277$ \\
 \hline
    \end{tabular}
    \caption{Convergence to $L_\Tam(\Q(\sqrt{-D}), -1)$.}
    \label{avgtable}
\end{table}
\end{center}
}

In \Cref{trivtable}, $P_{\Tam}(\Q(\sqrt{-D}), 1)$ is noticeably smaller when $D=7$. On the other hand, in \Cref{avgtable}, $L_{\Tam}(\Q(\sqrt{-D}), -1)$ is noticeably larger when $D=7$. The variance in $P_\Tam(\Q(\sqrt{-D}),1)$ and $L_{\Tam}(\Q(\sqrt{-D}), -1)$ is due to the splitting type of small primes, since $\delta_{K,\mathfrak{p}}(1)$ is smaller and $\sum_{m=1}^\infty \delta_{K,\pp}(m)m$ is larger when $\mathfrak{p}$ has small norm (see Propositions \ref{p=5_total}, \ref{p=3_total}, and \ref{p=2_total}). Indeed, $2$ splits only in $\Q(\sqrt{-7})$. For general number fields $K$, the possible splitting types of primes are determined by $d := \deg K$. It is then natural to ask whether $P_{\Tam}(K, 1)$ and $L_{\Tam}(\Q(\sqrt{-D}), -1)$ can be uniformly bounded as a function of $d$. We answer this question in the following corollary, where $\zeta(s)$ is the Riemann zeta-function and $B_{n}$ is the $n^{\text{th}}$ Bernoulli number.

\begin{corollary}
\label{d_range}
If $K$ has degree $d$, then
\[(0.5054)^d < P_{\Tam}(\Q, 1)^d \leq P_{\Tam}(K,1) <  (-1)^{d+1}\frac{2(2d)!}{B_{2d}(2\pi)^{2d}} = \frac{1}{\zeta(2d)} \]
and 
\[ \frac{\zeta(2d)}{\zeta(4d)} = (-1)^d\frac{B_{2d}(4d)!}{B_{4d}(2d)!(2\pi)^{2d}} < L_\Tam(K, -1) \leq L_\Tam(\Q,-1)^d < (1.8184)^d.\]
\end{corollary}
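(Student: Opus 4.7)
The plan is to derive every inequality from the Euler factorization of \Cref{l-series},
\[
P_\Tam(K;1)=\prod_p\prod_{\pp\mid p}\delta_{K,\pp}(1),\qquad L_\Tam(K;-1)=\prod_p\prod_{\pp\mid p}\sum_{m\ge 1}m\,\delta_{K,\pp}(m),
\]
together with the elementary degree identity $\sum_{\pp\mid p}e_\pp f_\pp=d$, which gives the crucial inequality $\sum_{\pp\mid p}f_\pp\le d$ at every rational prime $p$. The numerical anchors $P_\Tam(\Q;1)>0.5054$ and $L_\Tam(\Q;-1)<1.8184$ are imported from Griffin, Ono, and Tsai \cite{og}, and the Bernoulli reformulations come from Euler's formula $\zeta(2k)=(-1)^{k+1}B_{2k}(2\pi)^{2k}/(2(2k)!)$.

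For the outer bounds $P_\Tam(\Q;1)^d\le P_\Tam(K;1)$ and $L_\Tam(K;-1)\le L_\Tam(\Q;-1)^d$, I plan to establish the pointwise submultiplicative inequalities
\[
\delta_{K,\pp}(1)\ \ge\ \delta_{\Q,p}(1)^{f_\pp},\qquad \sum_{m\ge 1}m\,\delta_{K,\pp}(m)\ \le\ \Bigl(\sum_{m\ge 1}m\,\delta_{\Q,p}(m)\Bigr)^{f_\pp}
\]
for every prime $\pp$ of $K$ above $p$. Given these, multiplying over $\pp\mid p$ and invoking $\sum f_\pp\le d$ (along with $\delta_{\Q,p}(1)\le 1$ and $\sum m\,\delta_{\Q,p}(m)\ge 1$) converts them into the desired per-$p$ inequalities, and a final product over $p$ closes the argument.

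For the inner bounds involving $\zeta(2d)$, the strategy hinges on the local estimate $\delta_{K,\pp}(1)\le 1-N\pp^{-2}$, read off from \Cref{p=5_total}, \Cref{p=3_total}, and \Cref{p=2_total}. Applied directly this yields $P_\Tam(K;1)\le 1/\zeta_K(2)$, and the elementary prime-by-prime comparison $\prod_{\pp\mid p}(1-p^{-2f_\pp})\le 1-p^{-2d}$ (each factor is at most one and some $f_\pp\le d$) then gives $P_\Tam(K;1)<1/\zeta(2d)$. For the lower bound on $L_\Tam(K;-1)$, the identity
\[
\sum_{m\ge 1}m\,\delta_{K,\pp}(m)\ =\ 1+\sum_{m\ge 2}(m-1)\delta_{K,\pp}(m)\ \ge\ 2-\delta_{K,\pp}(1)\ \ge\ 1+N\pp^{-2}
\]
combined with the companion comparison $\prod_{\pp\mid p}(1+p^{-2f_\pp})\ge 1+p^{-2d}$ yields $L_\Tam(K;-1)\ge\zeta_K(2)/\zeta_K(4)\ge\zeta(2d)/\zeta(4d)$, with strictness coming from the positive slack at every $\pp$ for which $\delta_{K,\pp}(m)>0$ for some $m\ge 3$.

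The hard part will be verifying the local inequalities $\delta_{K,\pp}(1)\ge\delta_{\Q,p}(1)^{f_\pp}$, its $L_\Tam$ counterpart, and $\delta_{K,\pp}(1)\le 1-N\pp^{-2}$ directly from the Tamagawa distribution tables. For residue characteristic $p\ge 5$, both sides are explicit rational functions of $q=p^{f_\pp}$ and the inequalities reduce to polynomial manipulations that can be checked uniformly. For $p=2,3$, however, Tate's algorithm introduces additional dependence on the ramification index $e_\pp$ and on congruences between $a_4$ and $a_6$, so a finer casework will be required; this is the main technical burden, while everything else is a routine manipulation of Euler products.
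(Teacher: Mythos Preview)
Your approach to the outer bounds $P_\Tam(\Q;1)^d\le P_\Tam(K;1)$ and $L_\Tam(K;-1)\le L_\Tam(\Q;-1)^d$ via the submultiplicativity inequalities $\delta_{K,\pp}(1)\ge\delta_{\Q,p}(1)^{f_\pp}$ is essentially the paper's argument, just made more explicit: the paper asserts that the per-prime product $\prod_{\pp\mid p}\delta_{K,\pp}(1)$ is minimized at complete splitting and refers to the tables.

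There is, however, a genuine error in your plan for the inner bounds. The local estimate $\delta_{K,\pp}(1)\le 1-N\pp^{-2}$ that you say can be ``read off'' from \Cref{p=5_total}, \Cref{p=3_total}, and \Cref{p=2_total} is \emph{false} for every $\pp$ of residue characteristic $\ge 3$. From \Cref{p=5_total} one computes directly that $\delta_{K,\pp}(1)-(1-q^{-2})$ has numerator $3q^9+3q^8+5q^7+4q^6+5q^5+3q^4+6q^3+12q^2+12q+6>0$, so in fact $\delta_{K,\pp}(1)>1-q^{-2}$ for all $\pp\nmid(6)$; the same happens for $\pp\mid(3)$ (e.g.\ at $q=3$, $e=1$ one gets $\delta_{K,\pp}(1)\approx 0.9056>8/9$). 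Consequently both chains
\[
P_\Tam(K;1)\le\prod_\pp(1-N\pp^{-2})\quad\text{and}\quad 2-\delta_{K,\pp}(1)\ge 1+N\pp^{-2}
\]
break at their first step, and your route to $P_\Tam(K;1)<1/\zeta(2d)$ and $L_\Tam(K;-1)>\zeta(2d)/\zeta(4d)$ collapses.

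The paper's fix is precisely to acknowledge this deficit: for $p\ge 3$ it uses only the weaker bound $\delta_{K,\pp}(1)\le 1-q^{-2}+q^{-3}$, and then exploits the much stronger estimate $\delta_{K,\pp}(1)\le 1-q^{-1}+q^{-2}$ available at $\pp\mid(2)$ to absorb the accumulated $+q^{-3}$ surplus. The inequality
\[
\Bigl(1-\tfrac{1}{2^d}+\tfrac{1}{2^{2d}}\Bigr)\prod_{p\ge 3}\Bigl(1-\tfrac{1}{p^{2d}}+\tfrac{1}{p^{3d}}\Bigr)\ \le\ \prod_p\Bigl(1-\tfrac{1}{p^{2d}}\Bigr)
\]
is then a genuine (if elementary) computation, and the $L_\Tam$ lower bound is handled by the dual trade-off. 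Your Euler-product comparisons $\prod_{\pp\mid p}(1\mp p^{-2f_\pp})\lessgtr 1\mp p^{-2d}$ are correct and could still be used downstream, but you need this $p=2$ versus $p\ge 3$ balancing act first.
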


As $d\to \infty$ in \Cref{d_range}, the given lower and upper bounds for $P_{\Tam}(K,1)$ tend to 0 and 1, respectively. 
We can then ask whether $P_{\Tam}(K,1)$ can be arbitrarily close to $0$ or arbitrarily close to $1$ as $d \to \infty$. More formally, let 
\begin{equation*}
    t^{-}(d) := \inf_{\deg K = d} \left\{ P_{\Tam}(K,1) \right\} \quad \textrm{ and } \quad t^{+}(d) := \sup_{\deg K = d}\left\{ P_{\Tam}(K,1) \right\}
\end{equation*}
to be the infimum and supremum of the Tamagawa trivial proportion over number fields $K$ with degree $d$. 

Similarly, from \Cref{d_range}, as $d\to \infty$, the given lower and upper bounds for $L_\Tam(K,-1)$ tend to 1 and $\infty$, respectively. Therefore, we similarly define
\begin{equation*}
    \mu^-(d) := \inf_{\deg K = d} \{L_\Tam(K,-1)\} \quad \text{and} \quad \mu^+(d) := \sup_{\deg K=d} \{L_\Tam(K,-1)\}
\end{equation*}
to be the infimum and supremum of the average Tamagawa product over number fields $K$ with degree $d$.

Ono \cite{onoconj} conjecture that as $d \to \infty,$ the limit infimum of $t^{-}(d)$ is 0, the limit supremum of $t^{+}(d)$ is 1, the limit infimum of $\mu^{-}(d)$ is 1, and the limit supremum of $\mu^{+}(d)$ is $\infty$. We confirm the conjecture in the following theorem.

\begin{theorem}
\label{d1d2}
 We have
\[
\liminf_{d\rightarrow +\infty} t^-(d) = 0 \quad \textrm{ and } \quad \limsup_{d\rightarrow +\infty} t^+(d) = 1,
\]
and
\[\liminf_{d\to+\infty} \mu^-(d) = 1 \quad \text{and} \quad \limsup_{d\to+\infty} \mu^+(d) = \infty.\]
\end{theorem}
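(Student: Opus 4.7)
The plan is to prove each of the four statements by exhibiting, for arbitrarily large $d$, an explicit number field of degree $d$ realizing the asserted extreme. By \Cref{l-series}, both $P_\Tam(K;1)=\prod_\pp \delta_{K,\pp}(1)$ and $L_\Tam(K;-1)=\prod_\pp m(\pp)$, with $m(\pp):=\sum_c c\,\delta_{K,\pp}(c)\geq 1$, are Euler products whose factors depend only on the residue characteristic of $\pp$ and on $q:=N(\pp)$. The explicit local densities in \Cref{p=5_total}, \Cref{p=3_total}, and \Cref{p=2_total} can be read off to produce uniform estimates
\[1-\delta_{K,\pp}(1)=O(q^{-2}),\qquad m(\pp)-1=O(q^{-2}),\]
independent of residue characteristic; these asymptotics drive every estimate below.

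For $\liminf_d t^-(d)=0$ and $\limsup_d \mu^+(d)=\infty$, I consider the compositum
\[K_n:=\Q\bigl(\sqrt{-q_1},\dots,\sqrt{-q_n}\bigr),\]
where $q_1<q_2<\cdots$ are distinct primes with $q_i\equiv 7\pmod 8$. The integers $-q_i$ are multiplicatively independent modulo $(\Q^\times)^2$, so $[K_n:\Q]=2^n$. Since $-q_i\equiv 1\pmod 8$, the prime $2$ splits in each $\Q(\sqrt{-q_i})$; being unramified there, $2$ splits completely in the compositum $K_n$ into $2^n$ primes of norm $2$. Each such prime contributes the factor $\delta_{\Q,2}(1)<1$ to $P_\Tam$ and $m_{\Q,2}>1$ to $L_\Tam$, while every other local factor satisfies $\delta_{K_n,\pp}(1)\leq 1$ and $m(\pp)\geq 1$. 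Hence $P_\Tam(K_n;1)\leq \delta_{\Q,2}(1)^{2^n}\to 0$ and $L_\Tam(K_n;-1)\geq m_{\Q,2}^{2^n}\to\infty$, forcing $t^-(2^n)\to 0$ and $\mu^+(2^n)\to\infty$.

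For $\limsup_d t^+(d)=1$ and $\liminf_d \mu^-(d)=1$, I build, for each $d$, a degree-$d$ field $K_d$ in which every rational prime $p\leq d^2$ is inert. For each such $p$ pick an irreducible monic $\bar f_p\in\F_p[x]$ of degree $d$, and use the Chinese remainder theorem to lift to a monic $f\in\Z[x]$ of degree $d$ with $f\equiv\bar f_p\pmod p$ for every $p\leq d^2$. Then $f$ is irreducible over $\Q$, and since each $\bar f_p$ is separable we have $p\nmid\mathrm{disc}(f)$, so Kummer--Dedekind applies and each $p\leq d^2$ is inert in $K_d:=\Q[x]/(f)$ with residue field of size $p^d$. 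Using the inequality $1-\prod(1-\epsilon_\pp)\leq\sum\epsilon_\pp$, the Euler product yields
\[1-P_\Tam(K_d;1)\,\leq\,\sum_{p\leq d^2}O(p^{-2d})\ +\ \sum_{p>d^2}\sum_{\pp\mid p}O\bigl(N(\pp)^{-2}\bigr)\,\leq\, O(2^{-2d})+O\!\left(d\sum_{p>d^2}p^{-2}\right)=O(1/d),\]
which vanishes as $d\to\infty$. The identical estimate with $m(\pp)-1$ in place of $1-\delta_{K_d,\pp}(1)$ gives $L_\Tam(K_d;-1)\to 1$.

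The main technical work is extracting the uniform polynomial decay $1-\delta_{K,\pp}(1),\ m(\pp)-1=O(q^{-2})$ from the explicit data in \Cref{p=5_total}, \Cref{p=3_total}, and \Cref{p=2_total}: the formulas at small residue characteristics differ from the generic case, so this requires a short case analysis. The other ingredients --- multiplicative independence of the square roots, the splitting computation in the compositum, and the Kummer--Dedekind/CRT construction --- are standard, and once the $O(q^{-2})$ tails are in hand the four asymptotics follow immediately from the Euler product of \Cref{l-series}.
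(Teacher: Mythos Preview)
Your argument is correct in substance. For $t^-$ and $\mu^+$ your multiquadratic construction is essentially the paper's: you take $q_i\equiv 7\pmod 8$ so that $-q_i\equiv 1\pmod 8$, while the paper takes primes $p_i\equiv 1\pmod 8$ with positive square roots; in both cases $2$ splits in every quadratic subfield and hence completely in the compositum, and the remainder of the estimate is identical.

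For $t^+$ and $\mu^-$ you take a genuinely different route. The paper uses the cyclotomic fields $\Q(\zeta_a)$ with $a$ prime, controlling local norms via the decomposition law (the inertial degree of $p\neq a$ is $\mathrm{ord}_a(p)\ge\max\{2,\log_p a\}$) and treating the totally ramified prime above $a$ separately. Your CRT/Kummer--Dedekind construction instead forces every $p\le d^2$ to be inert and then bounds the tail over $p>d^2$ directly. Your approach is more elementary, avoids any special arithmetic of cyclotomic fields, and produces a witness in \emph{every} degree $d$ rather than only along $d=a-1$; the paper's choice has the advantage of using explicit, well-known fields. Both arguments ultimately feed the same Euler-product tail estimate from \Cref{l-series}.

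One point needs correction: your asserted uniform bound $1-\delta_{K,\pp}(1)=O(q^{-2})$ (and likewise $m(\pp)-1=O(q^{-2})$) \emph{fails} for $\pp\mid 2$, where one only has $O(q^{-1})$. Indeed, a short Weierstrass model has $\Delta=-16(4a_4^3+27a_6^2)$, so $\pi\mid\Delta$ automatically when $\pp\mid 2$ and no such curve has type $I_0$; the leading $c_\pp=1$ contribution is type $II$ with density $(q-1)/q$ (Table~\ref{p2psi1} and \Cref{p=2_total}), whence $\delta_{K,\pp}(1)=1-q^{-1}+O(q^{-2})$. The paper itself invokes only the weaker $\delta_{K,\pp}(1)\ge 1-q^{-1}$ at this step. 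This does not damage your proof, since in your $K_d$ the unique prime above $2$ is inert of norm $2^d$ and contributes $O(2^{-d})$ regardless; but you should replace the phrase ``independent of residue characteristic'' by the correct statement that the bound is $O(q^{-2})$ for $\pp\nmid 2$ and $O(q^{-1})$ for $\pp\mid 2$.
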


\begin{remark}
The proof of \Cref{d1d2} is constructive. Namely, we provide a family of multiquadratic fields $K$ for which $P_{\Tam}(K,1) \to 0$ and $L_{\Tam}(K,-1) \to \infty$, and a family of cyclotomic fields $K$ for which $P_{\Tam}(K,1) \to 1$ and $L_{\Tam}(K,-1) \to 1$. In \Cref{examples}, we compute $P_{\Tam}(K, 1)$ and $L_{\Tam}(K, -1)$ for example fields within these families.
\end{remark}

The remainder of the paper is structured as follows: In \Cref{prelim}, we introduce Tate's algorithm, a recursive procedure that computes the local invariants for elliptic curves, including the Tamagawa number of an elliptic curve $E/K$ at $\mathfrak{p}$. Running Tate's algorithm at $\mathfrak{p} \nmid (6)$ is relatively straightforward, but additional challenges arise at $\mathfrak{p} \mid (3)$ and $\mathfrak{p} \mid (2)$ since $E/K$ is cubic in $x$ and quadratic in $y$. Therefore, we begin by running Tate's algorithm at $\mathfrak{p} \nmid (6)$ in \Cref{TateP5}, and then run Tate's algorithm for $\mathfrak{p} \mid (3)$ and $\mathfrak{p} \mid (2)$ in Sections \ref{TateP3} and \ref{TateP2}, respectively. In \Cref{proofs}, we prove our main theorems. In \Cref{examples}, we compute $P_{\Tam}(K,m)$ and $L_{\Tam}(K, -1)$ for example number fields to illustrate \Cref{l-series}. For a classification of non-minimal short Weierstrass models at primes above $2$ and $3$, refer to \Cref{appendix}. For the exact proportions of Tamagawa numbers for primes above $2$ and $3$ with large ramification indices, refer to an extended version of the paper \cite{choi2021tamagawa}. 

\section*{Acknowledgements}
We are grateful for the generous support of the National Science Foundation (Grants DMS 2002265 and DMS 205118), National Security Agency (Grant H98230-21-1-0059), the Thomas Jefferson Fund at the University of Virginia, and the Templeton World Charity Foundation. The authors would like to thank Ken Ono and Wei-Lun Tsai for their mentorship as well as Noam Elkies for useful conversations. 

\section{Tate's Algorithm Over $\mathcal{O}_K$}
\label{prelim}

Tate's algorithm (see \cite[Section 4.9]{SilvermanAdvanced}) is an iterative process that returns the Kodaira type and the Tamagawa number of an elliptic curve $E/K$ at $\mathfrak{p}$, which allows us to compute $\delta_{K,\mathfrak{p}}(c).$ A single iteration of the algorithm consists of 11 steps. Select a uniformizer $\pi$ and denote the corresponding valuation on $K_v$ by $v_\pi$. If the model for $E$ has minimal $v_\pi(\Delta)$ over the $\pi-$integral models for $E$, then the algorithm terminates during the first ten steps, which correspond to each of the ten Kodaira types. We refer to such elliptic curves as \emph{$\pp$-minimal models}. However, if $E$ is not $\pp-$minimal, then, at Step 11 of Tate's algorithm, we scale $(x , y) \mapsto (\pi^2 x, \pi^3 y).$ The substitutions from Step 1 through 10 of Tate's algorithm guarantee that $E$ is $\pi-$integral after Step 11. A non-minimal model $E$ then loops back into Step 1 of Tate's algorithm. Tate's algorithm eventually terminates as the scaling at Step 11 decreases $v_{\pi}(\Delta)$ by $12$. At the step in which the algorithm terminates, the Tamagawa number of $E/K$ at $\mathfrak{p}$ is determined. After determining the proportion of elliptic curves with a fixed Tamagawa number that terminate at each step, we sum these proportions over all steps to compute $\delta_{K,\pp}(c)$.

In \cite{og}, Griffin et al.~classify elliptic curves into cases depending on $v_{\pi}(a_4), v_{\pi}(a_6),$ and $v_{\pi}(\Delta).$ They then apply distinct linear shifts to the curves in each case and parametrize $a_4$ and $a_6$ in terms of these shifts prior to running Tate's algorithm. Finally, they run the algorithm on each case separately. In our paper, we do not classify elliptic curves into cases before running Tate's algorithm. Instead, for each $\mathfrak{p},$ we run Tate's algorithm simultaneously for all $\mathfrak{p}-$minimal elliptic curves in the short Weierstrass form. These $\mathfrak{p}-$minimal elliptic curves are guaranteed to terminate during the first iteration of the algorithm. But when $E$ is non-minimal, $E$ passes through Step 11, then loops back into Step 1. For prime ideals $\mathfrak{p} \nmid (6),$ a non-minimal $E(a_4, a_6)$ is still in the short Weierstrass form after Step 11. But when $\mathfrak{p} \mid (3),$ the coefficient $a_2$ may be non-zero after Step 11, since $E$ is cubic in $x$. Therefore, when $\pp \mid (3)$, we must study Tate's algorithm over 
\begin{equation*}
\label{E246}
    E(a_2, a_4, a_6) : \ \ y^2 = x^3 + a_2x^2 + a_4x + a_6
\end{equation*} to understand how non-minimal curves loop back into Tate's algorithm. When $\pp \mid (2)$, the coefficients $a_1$ and $a_3$ may be nonzero after Step 11, since $E$ is quadratic in $y$. Likewise, we must study Tate's algorithm over 
\begin{equation*}
\label{E1346}
    E(a_1,a_3,a_4,a_6) :\ \ y^2 +a_1xy+a_3y = x^3 + a_4x + a_6. 
\end{equation*} Therefore, to study how Tate's algorithm acts up on non-minimal $E(a_4, a_6)$, we should study how Tate's algorithm acts upon $E(a_1, a_2, a_3, a_4, a_6).$ 

We compute $\delta_{K,\mathfrak{p}}(c)$ by first studying the $\mathfrak{p}-$minimal curves and then the non-minimal curves. To study the $\mathfrak{p}-$minimal curves, we define $\delta'_{K,\pp}(T, c; \alpha_1, \alpha_2, \alpha_3)$ to be the proportion of $\pp-$minimal models $E(a_1,a_2,a_3,a_4,a_6)$ with $v_\pi(a_i) = \alpha_i$ for $i=1,2,3$, Kodaira type $T$, and Tamagawa number $c$. In \Cref{p=5_pipelines}, \Cref{p=3_pipelines}, and \Cref{p=2_pipelines}, we compute $\delta'_{K,\pp}(T, c;\infty,\infty,\infty)$ for $\mathfrak{p} \nmid (6)$, $\delta'_{K, \mathfrak{p}} (T, c; \infty, \alpha_2, \infty)$ for $\pp \mid (3)$, and $\delta'_{K, \mathfrak{p}}(T, c; \alpha_1,\infty,\alpha_3)$ for $\pp \mid (2)$, respectively. These values of $\delta'$ accounts for the potentially nonzero coefficients after Step 11.

Then we study the form of non-minimal curves after Step 11. When $\mathfrak{p} \nmid (6),$ the elliptic curves that loop back are still in the short Weierstrass form, which we visualize in a simple Markov chain in \Cref{markovp5}. When $\mathfrak{p} \mid (3)$ (resp.~$\mathfrak{p} \mid (2)$) however, a non-minimal elliptic curve after Step 11 may not be short anymore, and so the underlying Markov chain structure is more complex as in \Cref{markovp3} (resp.~\Cref{markovp2e1}). Families, which are sets of elliptic curves that act as nodes in these Markov chain, are defined in \Cref{family3} and \Cref{family2}. The edges are determined for $\mathfrak{p} \mid(3)$ (resp. $\mathfrak{p} \mid (2)$) in \Cref{p=3_markov} (resp. Lemma \ref{p=2_markov}).

Our analysis of Tate's algorithm boils down to studying congruences in terms of the coefficients $a_1, a_2, a_3, a_4, a_6$ modulo bounded powers of $\pi$. We often note that, when certain quantities like $a_2$, $a_4$, or $a_6 \pmod{\pi}$ are fixed, there are a fixed number of choices for $a_4$ and $a_6$ modulo bounded powers of $\pi$. As such, an important quantity throughout our calculation is the ideal norm $q := N_{K/\Q}(\pp)$, or the number of distinct residues in $\mathcal{O}_{K_v}$ modulo $\pi$. To further illustrate the connection between Tate's algorithm and these modular congruences, we generalize the results of Griffin et al.~\cite[Lemmas 2.2, 2.3]{og} by classifying and counting the non-minimal short Weierstrass models at primes $\pp \mid (6)$ in \Cref{appendix}.

\section{Classification for $\mathfrak{p} \nmid (6)$}\label{TateP5}

In this section, we calculate $\delta_{K,\mathfrak{p}}(c)$ for $\mathfrak{p}$ above $p \geq 5.$ We realize upon running Tate's algorithm for non-minimal short Weierstrass models (see \Cref{p=5_pipelines}) that the coefficients $a_1,$ $a_2,$ and $a_3$ remain invariant at $0$. (This confirms the converse of non-minimality for $\operatorname{char} k \neq 2,3$ in~\cite{Silverman}.) Hence, we define $\varphi_{K,\mathfrak{p}}(T, c_{\mathfrak{p}}) := \delta'_{K,\mathfrak{p}}(T, c_{\mathfrak{p}}; \infty, \infty, \infty)$. We first calculate the values of $\varphi_{K,\mathfrak{p}}(T, c_\mathfrak{p})$ and the structure of the associated Markov chain.

\begin{lemma}
\label{p=5_pipelines}
Suppose that $\mathfrak{p} \nmid (6)$ is a prime ideal in $K$. Consider the family of Weierstrass models $E(a_4,a_6) : y^2 = x^3 + a_4x + a_6$. Then the local densities $\varphi_{K,\mathfrak{p}}(T, c)$ are provided in \Cref{pgeq5_varphi}.
\end{lemma}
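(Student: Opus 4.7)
The plan is to run Tate's algorithm directly on $E(a_4, a_6) : y^2 = x^3 + a_4 x + a_6$ for a single $\mathfrak{p} \nmid (6)$, since by definition $\varphi_{K,\mathfrak{p}}(T,c) = \delta'_{K,\mathfrak{p}}(T,c;\infty,\infty,\infty)$ counts exactly the $\pp$-minimal short models in which $a_1 = a_2 = a_3 = 0$. Because the residue characteristic is at least $5$, every translation Steps 1--10 perform on such a model preserves the vanishing of $a_1, a_2, a_3$, so one pass through the algorithm is enough; the non-minimal short models (characterized by $v_\pi(a_4) \geq 4$ and $v_\pi(a_6) \geq 6$) feed back into the Markov chain of \Cref{p=5_total}. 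The Kodaira types are separated by the standard inequalities among $v_\pi(a_4)$, $v_\pi(a_6)$, and $v_\pi(\Delta)$ with $\Delta = -16(4a_4^3 + 27a_6^2)$, giving a finite case split together with the two infinite families $I_n$ and $I_n^*$.

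To convert each case into a density I would use the product Haar measure on $\mathcal{O}_{K_v} \times \mathcal{O}_{K_v}$, under which an exact condition $v_\pi(a_i) = \alpha$ contributes a factor $q^{-\alpha}(1 - q^{-1})$ where $q := N_{K/\mathbb{Q}}(\pp)$. For the $I_n$ family the Tamagawa number is $n$ in the split multiplicative case and $\gcd(n, 2)$ in the non-split case; after translating the node to the origin, this dichotomy reduces to one nonzero quadratic-residue condition on an explicit residue-field element, contributing the ratio $(q-1)/(2q)$ to each branch. For $I_0^*$ the Tamagawa number takes values in $\{1, 2, 4\}$ governed by the factorization pattern of a residue-field cubic, for which I would invoke the standard counts of monic cubics over $\mathbb{F}_q$ by splitting type. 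The $I_n^*$ family is analogous to $I_n$. Summing across cases produces a closed-form rational function in $q$ for each pair $(T, c)$, which is the content of \Cref{pgeq5_varphi}.

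The principal bookkeeping obstacle is the pair of infinite families $I_n$ and $I_n^*$: for each $n$ the density scales like $q^{-n}$ times a $q$-dependent constant and a quadratic-residue factor, and the entries of \Cref{pgeq5_varphi} must be read off uniformly in $n$ because both $c$ and the split/non-split decoration depend on $n$. The one real subtlety is confirming that the split/non-split test for $I_n$ is determined by the leading residue-field terms of $a_4, a_6$ and is independent of higher $\pi$-adic digits; this holds because the two tangent directions at the node are already distinct modulo $\pi$. Since no step uses information about $K$ beyond the single integer $q$, the resulting table is uniform in $K$ and depends on $\pp$ only through its norm.
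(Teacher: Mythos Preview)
Your proposal is correct and follows essentially the same approach as the paper: both run Tate's algorithm step by step on the short Weierstrass model, convert each termination condition into a Haar-measure density in $q$, handle the $I_n$ and $I_n^*$ families via a quadratic-residue dichotomy, and count $I_0^*$ Tamagawa numbers by the splitting type of a traceless cubic over $\mathbb{F}_q$. The only minor imprecision is your claim that the intermediate translations preserve $a_2 = 0$; in fact the shifts at Steps 2 and 7 can introduce an $a_2$-term, but only at steps where the algorithm terminates, so non-minimal models do emerge from Step 11 still in short Weierstrass form, which is all that is needed.
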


\setlength{\tabcolsep}{6pt} 
\renewcommand{\arraystretch}{2.}
\begin{center}
\begin{table}[h!]
\begin{tabular}{|c|c|c|||c|c|c|||c|c|c|}
 \hline
Type & $c_\pp$ & $\varphi_{K,\mathfrak{p}}(T,c_{\mathfrak{p}})$ & Type & $c_\pp$ & $\varphi_{K,\mathfrak{p}}(T,c_{\mathfrak{p}})$ &Type &$c_\pp$ &$\varphi_{K,\mathfrak{p}}(T,c_{\mathfrak{p}})$ \\ \hline \hline \hline
$I_0$ & $1$ & $\frac{q-1}{q}$ & $I_0^*$ & $1$ & $\frac{1}{3}\frac{(q^2-1)}{q^7}$ &$III$ & $2$ & $\frac{(q-1)}{q^4}$\\ \hline
$I_1$ & $1$ & $\frac{(q-1)^2}{q^3}$ & $I_0^*$ & $2$ & $\frac{1}{2}\frac{(q-1)}{q^6}$ &  $III^*$ & $2$ & $\frac{(q-1)}{q^9}$\\ \hline
$I_2$ & $2$ & $\frac{(q-1)^2}{q^4}$ & $I_0^*$ & $4$ & $\frac{1}{6}\frac{(q-1)(q-2)}{q^7}$  & $IV$ & $1$ & $\frac{1}{2}\frac{(q-1)}{q^5}$\\ \hline
$I_{n\geq3}$ & $\varepsilon(n)$ & $\frac{1}{2}\frac{(q-1)^2}{q^{n+2}}$ & $I^*_{n\geq 1}$ & $2$ & {\color{black}$\frac{1}{2}\frac{(q-1)^2}{q^{7+n}}$} & $IV$ & $3$ & $\frac{1}{2}\frac{(q-1)}{q^5}$\\ \hline
$I_{n\geq3}$ & $n$ & $\frac{1}{2}\frac{(q-1)^2}{q^{n+2}}$ & $I^*_{n\geq 1}$ & $4$ & {\color{black}$\frac{1}{2}\frac{(q-1)^2}{q^{7+n}}$} & $IV^*$ & $1$ & $\frac{1}{2}\frac{(q-1)}{q^8}$\\ \hline
$II$ & $1$ & $\frac{(q-1)}{q^3}$ & $II^*$ & $1$ & $\frac{(q-1)}{q^{10}}$   & $IV^*$ & $3$ & $\frac{1}{2}\frac{(q-1)}{q^8}$  \\ \hline
\end{tabular}
\smallskip
\caption{The values of $\varphi_{K,\mathfrak{p}}(T, c)$ for $\pp \nmid (6)$. (Note: $\varepsilon(n):=((-1)^n+3)/2.$)}
\label{pgeq5_varphi}
\end{table}
\end{center}

\begin{proof}
We run through Tate's algorithm over $E(a_4, a_6)$ to compute $\varphi_{K, \mathfrak{p}}(T, c)$. Recall that $q = N_{K/\Q}(\pp)$ is the ideal norm of $\pp$. 

\noindent \textbf{Step 1.} $E$ terminates if $\pi \nmid \Delta =  - 16 (4 a_4^3+27 a_6^2)$ or when $(a_4, a_6) \not\equiv (-3w^2, 2w^3) \pmod{\pi}.$ Therefore, we have $q^2-q$ choices of $(a_4, a_6)$ modulo $\pi.$ As a result, $\varphi_{K,\mathfrak{p}}(I_{0}, 1) = \frac{q-1}{q}.$   

\noindent\textbf{Step 2.} Suppose that $E$ is singular at $(u,0)$ after reduction. We shift $x \mapsto x + u$; the new model is
\begin{equation*}
    y^2 = (x+u)^3 + a_4(x+u) + a_6 \label{shift1p5}.
\end{equation*}
We stop if $u \not\equiv 0 \pmod{\pi}$. We check that exactly half of the choices of $u$ result in $T^2 - 3u$ splitting. By Hensel's lemma, these curves have a $\frac{(q-1)^2}{q^{n+2}}$ chance of satisfying $n = v_{\pi}(\Delta)$. Hence, we have $\delta_{K,\mathfrak{p}}(I_n, n) = \delta_{K,\mathfrak{p}}(I_n, \varepsilon(n)) = \frac{(q-1)^2}{2q^{n+2}}$. Henceforth, assume $u = 0$, which implies $\pi \mid a_4, a_6$.

\noindent\textbf{Step 3.} We stop if $\pi^2 \nmid a_6.$ There is one choice for $a_4 \pmod{\pi}$ and $q-1$ choices for $a_6 \pmod{\pi^2}$, whence $\delta_{K,\mathfrak{p}}(II, 1) = \frac{q-1}{q^3}.$ Henceforth, assume $\pi^2 \mid a_6$.

\noindent\textbf{Step 4.} We stop when $\pi^2 \nmid a_4$. Thus, we have $q-1$ choices for $a_4 \pmod{\pi^2}$ and one choice for $a_6 \pmod{\pi^2}$, so $\delta_{K,\mathfrak{p}}(III, 1) = \frac{q-1}{q^4}.$ Henceforth, assume $\pi^2 \mid a_4$.

\noindent\textbf{Step 5.} We stop at Step 5 if $\pi^3 \nmid a_6.$ Thus, we have one choice for $a_4 \pmod{\pi^2}$ and $q-1$ choices for $a_6 \pmod{\pi^3}$. The Tamagawa number is $3$ if $Y^2 - (\pi^{-2} a_6)^2$ splits modulo $\pi$, and $1$ otherwise. Hence, $\delta_{K,\mathfrak{p}}(IV, 1) = \delta_{K,\mathfrak{p}}(IV, 3)= \frac{q-1}{2q^5}.$ Henceforth, assume $\pi^3 \mid a_6$.

\noindent\textbf{Step 6.} In Step 6 to 8, we study the polynomial $P(T) = T^3 + \frac{a_4}{\pi^2} T + \frac{a_6}{\pi^3} =: T^3 + A_4 T + A_6$. Note that $A_4$ and $A_6$ are equally distributed among the residues in $k$. The cubic $P(T)$ has discriminant $4A_4^3 + 27A_6^2$. We stop if $P(T)$ has three distinct roots, i.e., if $4A_4^3 + 27A_6^2 \not\equiv 0 \pmod{\pi}$. This accounts for $q^2 - q$ residue pairs as $(A_4, A_6) \not\equiv (-3w^2, 2w^3)$ modulo $\pi$.

We now classify $P(T)$ based on the number of its roots in $k$. First, if $c_{\mathfrak{p}} = 1$, then $P(T)$ must be irreducible. The linear map $\operatorname{Tr} : \mathbb{F}_{q^3} \to \mathbb{F}_q$ is surjective, so there are $q^3 / q = q^2$ traceless elements of $\mathbb{F}_{q^3}$, one of which is in $\mathbb{F}_q$. Thus, there are $\frac{q^2-1}{3}$ such cubics. Next, if $c_{\mathfrak{p}} = 2$, then $P(T)$ factors into a linear term and an irreducible quadratic. There are $\frac{q^2-q}{2}$ irreducible quadratics and upon fixing this quadratic, the linear term is fixed as $P(T)$ is traceless. Finally, if $c_{\mathfrak{p}} = 4$, then $P(T)$ has three roots in $k$. There are $\binom{q}{3}$ ways to select three roots and $1/q$ of them have trace 0, so we have $\frac{(q-1)(q-2)}{6}$ such $P(T)$. In all, we have $\delta_{K,\mathfrak{p}}(I_0^*, 1) = \frac{q^2-1}{3q^7}$, $\delta_{K,\mathfrak{p}}(I_0^*, 2) = \frac{q-1}{2q^6}$, and $\delta_{K,\mathfrak{p}}(I_0^*, 4) = \frac{(q-1)(q-2)}{6q^7}$.

\noindent\textbf{Step 7.} We stop when $4A_4^3 + 27A_6^2 \equiv 0 \pmod{\pi}$ but $(A_4,A_6) \not\equiv (0,0) \pmod{\pi}$. Let $r := \sqrt{A_4/3}$ that serves as the double root of $P(T)$. We accordingly shift $x \mapsto x+\pi r$: 
\begin{equation}
    y^2 = (x+\pi r)^3 + a_4(x+\pi r) + a_6 = x^3 + a_2'x^2 + a_6'.\label{shift2p5}
\end{equation}
Ultimately, the Kodaira type depends on $n := v_\pi(a_6') - 3$, which occurs with proportion $\frac{(q-1)^2}{q^{n+7}}$. The Tamagawa number depends on whether $\frac{a_6'}{\pi^{v_\pi(a'_6)}}$ is a quadratic residue, which happens half of the time. Hence, $\delta_{K,\mathfrak{p}}(I_n^*, 2) = \delta_{K,\mathfrak{p}}(I_n^*, 4) = \frac{(q-1)}{2q^{n+7}}$.

\noindent\textbf{Step 8.} $P(T)$ is traceless. Therefore, its triple root must be $0$, so henceforth $\pi^3 \mid a_4$ and $\pi^4 \mid a_6$. We stop if $Y^2 - \pi^{-4} a_6$ has distinct roots, i.e., if $\pi^{-4} a_6 \not\equiv 0\pmod{\pi}$. We have one choice for $a_4 \pmod{\pi^3}$ and $q-1$ choices for $a_6 \pmod{\pi^5}$, half of which cause $Y^2 - \pi^{-4}a_6$ to split. Thus, $\varphi_{K,\mathfrak{p}}(IV^*, 1) = \varphi_{K,\mathfrak{p}}(IV^*, 3) = \frac{q-1}{2q^8}$.

\noindent\textbf{Step 9.} We terminate if $\pi^4 \nmid a_4$. There are $q-1$ choices for $a_4 \pmod{\pi^4}$ and one choice for $a_6 \pmod{\pi^5}$, whence $\varphi_{K,\mathfrak{p}}(III^*, 2) = \frac{q-1}{q^9}$.

\noindent\textbf{Step 10.} We stop if $\pi^6 \nmid a_6$. There is one choice for $a_4 \pmod{\pi^4}$ and $q-1$ choices for $a_6 \pmod{\pi^6}$, whence $\varphi_{K,\mathfrak{p}}(II^*, 1) = \frac{q-1}{q^{10}}$.

\noindent\textbf{Step 11.} For $E$ not to be $\mathfrak{p}-$minimal, it must be that $\pi^4 \mid a_4$ and $\pi^6 \mid a_6$, so the proportion of non-minimal $E$ is $\frac{1}{q^{10}}.$
\end{proof}

\begin{remark}
The above local proportions $\varphi_{K,\pp}(T, c)$ exactly match the local proportions $\delta'_p(T, c)$ for $p \geq 5$ in Griffin et al.~\cite[Table 5]{og} after replacing the rational prime $p$ with the ideal norm $q$.
\end{remark}

From \Cref{p=5_pipelines}, a non-minimal curve $E(a_4,a_6)$ transforms into $E'(a_4,a_6) := E(\pi^{-4}a_4, \pi^{-6}a_6)$. As we run through the non-minimal $E$, the family of $E'$ is equivalent to that of $E$. Thus, we may rerun \Cref{p=5_pipelines} on the new family $E'$. \Cref{markovp5} illustrates the resultant Markov chain.

\begin{figure}
\begin{center}
\begin{tikzpicture}[->, >=stealth', auto, semithick, node distance=3.5cm]
\tikzstyle{every state}=[draw, rectangle, minimum height = 1cm, minimum width = 1cm, fill=white,draw=black,thick,text=black,scale=1]
\node[state]    (A)                     {Terminate};
\node[state]    (D)[left of=A]   {};

\path
(D) edge                node{$1 - \frac{1}{q^{10}}$}           (A)

(D) edge[loop above]    node{$\frac{1}{q^{10}}$}     (D);

\end{tikzpicture}
\end{center}

\caption{The Markov chain structure for short Weierstrass curves when $\mathfrak{p} \nmid (6)$.}
\label{markovp5}
\end{figure}
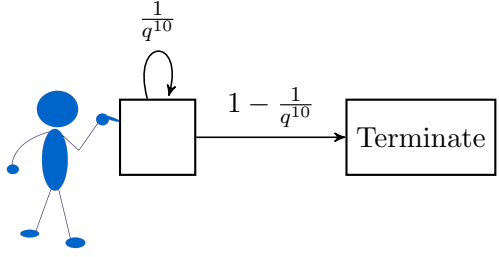

\begin{proposition}
\label{p=5_total}
If $\mathfrak{p} \nmid (6)$ is a prime ideal in $K$ and $c \geq 1,$ then letting $q := N_{K/\mathbb{Q}}(\mathfrak{p})$ we have
$$
\delta_{K,\mathfrak{p}}(c)=
\begin{cases}
\displaystyle\ 1-\frac{q(6q^7+9q^6+9q^5+7q^4+8q^3+7q^2+9q+6)}{6(q+1)^2(q^8+q^6+q^4+q^2+1)} \ \ \ \ &{\text if }\ c=1,\\[14pt]
\displaystyle \ \ \ \frac{q(2q^7+2q^6+q^5+q^4+2q^3+q^2+2q+2)}{2(q+1)^2(q^8+q^6+q^4+q^2+1)} \ \ \ \  &{\text if}\  c=2,\\[14pt]
\displaystyle\ \ \ \frac{q^2(q^4+1)}{2(q+1)(q^8+q^6+q^4+q^2+1)} \ \ \ \ &{\text if}\ c=3,\\[14pt]
\displaystyle\ \ \ \frac{q^3(3q^2-{2q}-1)}{6(q+1)(q^8+q^6+q^4+q^2+1)} \ \ \ \ &{\text if}\  c=4,\\[14pt]
\displaystyle \ \ \ \frac{q^{10}-2q^9+q^8}{2q^{c}(q^{10}-1)} \ \ \ \ &{\text if }\ c\geq 5.\\
\end{cases}
$$
\end{proposition}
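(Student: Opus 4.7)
The plan is to combine the local proportions from Lemma \ref{p=5_pipelines} with the Markov chain in Figure \ref{markovp5}. A short Weierstrass model is non-minimal at $\mathfrak{p}$ precisely when $\pi^4 \mid a_4$ and $\pi^6 \mid a_6$, which occurs with probability $q^{-10}$, and the substitution $(x,y)\mapsto(\pi^2 x,\pi^3 y)$ at Step 11 carries a non-minimal model to another short Weierstrass model whose coefficients are again uniformly distributed over $\mathcal{O}_{K_v}$. Consequently, the probability of terminating during the $(k{+}1)^{\text{st}}$ iteration of Tate's algorithm with Tamagawa number $c$ is $q^{-10k}\sum_T \varphi_{K,\mathfrak{p}}(T,c)$, and summing the resulting geometric series gives
\[
\delta_{K,\mathfrak{p}}(c) \;=\; \frac{q^{10}}{q^{10}-1}\sum_T \varphi_{K,\mathfrak{p}}(T,c).
\]
The proposition thus reduces to aggregating the appropriate entries of Table \ref{pgeq5_varphi}.

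For $c \geq 5$, only the row $I_c$ of Table \ref{pgeq5_varphi} contributes (by inspection of the Tamagawa-number column), so the single entry $(q-1)^2/(2q^{c+2})$ multiplied by $q^{10}/(q^{10}-1)$ rearranges directly to the claimed $(q^{10}-2q^9+q^8)/(2q^c(q^{10}-1))$. The cases $c=3$ and $c=4$ are similarly short, with only three contributions each ($I_3$, $IV$, $IV^*$ for $c=3$; $I_4$, $I_0^*$, $I^*_{n\ge 1}$ for $c=4$), the only infinite sum being $\sum_{n\ge 1} q^{-n} = 1/(q-1)$ from the $I^*_{n\ge 1}$ family in the $c=4$ case. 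The cases $c=1$ and $c=2$ are the most laborious, aggregating contributions from eight and six Kodaira types respectively; here the parity conditions encoded in $\varepsilon(n) = ((-1)^n+3)/2$ restrict $I_{n\ge 3}$ to odd or even $n$, producing geometric series of ratio $q^{-2}$ whose value $q^2/(q^2-1) = q^2/[(q-1)(q+1)]$ introduces the additional factor of $(q+1)$ in the denominator.

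After substituting these closed forms and using the factorization $q^{10}-1 = (q-1)(q+1)(q^8+q^6+q^4+q^2+1)$, the denominators $(q+1)(q^8+q^6+q^4+q^2+1)$ and $(q+1)^2(q^8+q^6+q^4+q^2+1)$ of the statement emerge naturally, the extra $(q+1)$ factor in the $c=1,2$ cases arising from the parity-restricted $q^{-2}$ geometric sums. The factors of $6$ in the $c=1$ and $c=4$ denominators trace back to the denominator $3$ in $\varphi_{K,\mathfrak{p}}(I_0^*,1) = (q^2-1)/(3q^7)$ and the denominator $6$ in $\varphi_{K,\mathfrak{p}}(I_0^*,4) = (q-1)(q-2)/(6q^7)$, combined with $\tfrac12$ factors from the $I_n$ and $I_n^*$ rows. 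The main obstacle is not conceptual but algebraic bookkeeping: one must collect all contributions over a common denominator and verify the polynomial identity in the numerator for each of $c=1,2,3,4$. Each such identity is elementary and can be checked by hand or by computer algebra, completing the proof.
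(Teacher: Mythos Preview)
Your proposal is correct and follows essentially the same approach as the paper: combine the local proportions $\varphi_{K,\mathfrak{p}}(T,c)$ from Lemma~\ref{p=5_pipelines} with the geometric series $\sum_{k\ge 0} q^{-10k}=q^{10}/(q^{10}-1)$ coming from the Markov chain in Figure~\ref{markovp5}, then sum over Kodaira types with the given Tamagawa number. The paper's own proof is in fact terser than yours, stating only the formula $\delta_{K,\mathfrak{p}}(c)=(1+q^{-10}+q^{-20}+\cdots)\sum_{c_\pp=n}\varphi_{K,\mathfrak{p}}(T,c_\pp)$ and leaving the case-by-case aggregation and algebraic simplification implicit.
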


\begin{proof}
Following the expressions for $\delta_{K, \pp}(c)$ in \Cref{p=5_pipelines}  and the Markov chain as illustrated in \Cref{markovp5}, we have that
\begin{equation}
\delta_{K,\mathfrak{p}}(c) = \left(1 +\frac{1}{q^{10}}+ \frac{1}{q^{20}}+ \ldots\right) \sum_{c_\pp = n} \varphi_{K,\mathfrak{p}}(T, c_\pp). \qedhere
\end{equation}
\end{proof}

\begin{remark}
When $K = \Q$, then $\mathfrak{p} = (p)$ for rational primes $p$ and we recover the formulae over rationals found in~\cite{og}. Conversely, the expressions for the proportions over general $K$ match that of $\Q$, except the rational prime $p$ is replaced with the prime ideal norm $q$.
\end{remark}

\section{Classification for $\mathfrak{p} \mid (3)$}
\label{TateP3}

In this section, we derive $\delta_{K,\mathfrak{p}}(c)$ for $\mathfrak{p} \mid (3).$ Unlike in \Cref{TateP5}, Tate's algorithm may introduce a non-zero $a_2-$coefficient on non-minimal $E(a_4, a_6)$ that loops back into the algorithm. Therefore, to determine $\delta_{K,\mathfrak{p}}(c),$ we study the action of Tate's algorithm on a larger class of elliptic curves---namely, on $E(a_2, a_4, a_6)$, as defined in \Cref{E246}. The short Weierstrass elliptic curves are exactly the $E(a_2,a_4,a_6)$ with $\alpha_2 = \infty$. Naturally, we group elliptic curves into families depending on $\alpha_2$ as follows.

\begin{definition}\label{family3}
The \emph{3-family} $F(\alpha_2)$ refers to the set of models
\[F(\alpha_2) := \{E(a_2, a_4, a_6) : v_\pi(a_2) = \alpha_2; \ a_4, a_6 \ \text{integral}\}.\]
The 3-family $F(\geq \alpha_2)$ refers to the set $\bigsqcup_{\alpha \geq \alpha_2} F(\alpha)$.
\end{definition}


For brevity, we refer to 3-families as families for this section. We first run Tate's algorithm to calculate $\chi_{K,\mathfrak{p}}(T, c; \alpha_2) := \delta'_{K,\pp}(T, c;\infty, \alpha_2, \infty)$, which is the proportion of $\pp-$minimal models with Kodaira type $T$ and Tamagawa number $c$ for each family $F(\alpha_2)$.

\begin{lemma}
\label{p=3_pipelines}

Suppose that $\mathfrak{p} \subseteq K$ is above $3$ with ramification index $e$. Then for $F(\alpha_2)$, the local densities $\chi_{K,\mathfrak{p}}(T, c; \alpha_2)$ is as provided in Table~\ref{p3chi}.

{\tiny 
\setlength{\tabcolsep}{6pt} 
\renewcommand{\arraystretch}{2.}
\begin{center}
\begin{table}[ht!]
\begin{tabular}{|c|c||c|c||c|c|c|}
 \hline
  & & \multicolumn{2}{c||}{$e=1$}  &  \multicolumn{3}{c|}{$e\geq 2$}   \\ \hline
 Type & $c_\pp$ & $\alpha_2 = 0$ & $\alpha_2 \geq 1$ & $\alpha_2 = 0$ & $\alpha_2 = 1$ & $\alpha_2 \geq 2$ \\ \hline \hline
 $I_0$ & $1$ & $(q-1)/q$ & $(q-1)/q$ &$(q-1)/q$  & $(q-1)/q$ & $(q-1)/q$ \\ \hline
 $I_1$ & $1$ & $(q-1)/q^2$ & $0$ & $(q-1)/q^2$ & $0$ & $0$ \\ \hline
 $I_2$ & $2$ & $(q-1)/q^3$ & $0$ & $(q-1)/q^3$ & $0$ & $0$ \\ \hline
 $I_{n\ge 3}$ & $n$ & $(q-1)/2q^{n+1}$ & $0$ & $(q-1)/2q^{n+1}$ & $0$ & $0$ \\ \hline
 $I_{n\ge 3}$ & $\varepsilon(n)$ & $(q-1)/2q^{n+1}$ & $0$ & $(q-1)/2q^{n+1}$ & $0$ & $0$ \\ \hline
 $II$ & $1$ & $0$ & $(q -1)/q^2$ & $0$ & $(q-1)/q^2$ & $(q-1)/q^2$ \\ \hline
 $III$ & $2$ & $0$ & $(q-1)/q^3$ & $0$ & $(q-1)/q^3$ & $(q-1)/q^3$ \\ \hline
 $IV$ & $1$ & $0$ & $(q-1)/2q^4$ & $0$ & $(q-1)/2q^4$ & $(q-1)/2q^4$ \\ \hline
 $IV$ & $3$ & $0$ & $(q-1)/2q^4$ & $0$ & $(q-1)/2q^4$ & $(q-1)/2q^4$ \\ \hline
 $I_0^*$ & $1$ & $0$ & $(q^2-1)/3q^6$ & $0$ & $1/3q^5$  & $(q-1)/3q^5$   \\ \hline
 $I_0^*$ & $2$ & $0$ & $(q-1)/2q^5$ & $0$ &  $(q-1)/2q^6$ & $(q-1)/2q^5$ \\ \hline
 $I_0^*$ & $4$ & $0$ & $(q-1)(q-2)/6q^6$ & $0$ & $(q-3)/6q^6$  & $(q-1)/6q^5$ \\ \hline
 $I_{n \geq 1}^*$ & $2$ & $0$ & $(q-1)^2/2q^{6+n}$ & $0$ & $(q-1)/2q^{5+n}$ & $0$ \\ \hline
 $I_{n \geq 1}^*$ & $4$ & $0$ & $(q-1)^2/2q^{6+n}$ & $0$ & $(q-1)/2q^{5+n}$ & $0$ \\ \hline
 $IV^*$ & $1$ & $0$ & $(q-1)/2q^7$ & $0$ & $0$ & $(q-1)/2q^6$ \\ \hline
 $IV^*$ & $3$ & $0$ & $(q-1)/2q^7$ & $0$ & $0$ & $(q-1)/2q^6$ \\ \hline
 $III^*$ & $2$ & $0$ & $(q-1)/q^8$ & $0$ & $0$ & $(q-1)/q^7$ \\ \hline
 $II^*$ & $1$ & $0$  & $(q-1)/q^9$ & $0$ & $0$ & $(q-1)/q^8$ \\ \hline

\end{tabular}
\smallskip
\caption{The values of $\chi_{K,\mathfrak{p}}(T, c; \alpha_2)$ for $\mathfrak{p} \mid (3).$}
\label{p3chi}
\end{table}
\end{center}
}
\end{lemma}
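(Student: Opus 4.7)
The plan is to run Tate's algorithm step by step on the enlarged family $E(a_2, a_4, a_6) \in F(\alpha_2)$ defined in \Cref{family3}, counting $\pi$-adic digit configurations of $(a_2, a_4, a_6)$ that cause termination at each Kodaira type with each Tamagawa number. The overall mechanics parallel those of \Cref{p=5_pipelines}, but three new complications arise: (i) since $3$ is no longer a unit at $\mathfrak{p}$, we cannot absorb $a_2$ into $x$ by completing the cube, so $a_2$ persists as a live parameter throughout the algorithm; (ii) the ramification index $e = v_\pi(3)$ alters the valuations of the $27 b_6^2$ and $9 b_2 b_4 b_6$ terms in $\Delta$; and (iii) the valuation $\alpha_2 = v_\pi(a_2)$ directly controls the cubic $P(T) = T^3 + \bar a_2 T^2 + \overline{\pi^{-1} a_4}\, T + \overline{\pi^{-2} a_6}$ examined at Step 6, necessitating the family-by-family split.

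I first handle $F(0)$. When $\bar a_2 \in k^\times$, the reduction $\bar E$ is either smooth or has a nodal singularity (never a cusp), so only the multiplicative Kodaira types $I_n$ can appear. A Step 2 shift placing the node at the origin, combined with Hensel's lemma, reproduces Cases 1--2 of \Cref{p=5_pipelines}, and the $1{:}1$ split-vs-nonsplit ratio of the node's tangent directions yields the $\varepsilon(n)$ versus $n$ branching of the Tamagawa number. The answer depends only on $q$, not on $e$, because no factor of $3$ enters the mod-$\pi$ discriminant.

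For $F(\alpha_2)$ with $\alpha_2 \geq 1$, the reduction at the origin can be cuspidal and all additive Kodaira types become accessible in principle. I run through Steps 3--10 in turn, tallying at each step the $(a_4, a_6)$-configurations modulo the appropriate power of $\pi$ satisfying the termination condition. For $e = 1$ with $\alpha_2 \geq 1$, the coefficient $3$ itself has $v_\pi(3) = 1$, so the $a_2 T^2$ term of $P(T)$ vanishes modulo $\pi$ regardless of whether $\alpha_2 = 1$ or $\alpha_2 \geq 2$; the dichotomy collapses, and the densities match those of \Cref{p=5_pipelines} after accounting for the free $a_2$-digit in $F(\geq 1)$.

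The main obstacle is the split between $\alpha_2 = 1$ and $\alpha_2 \geq 2$ when $e \geq 2$. Here I exploit two identities in characteristic $3$: first, $(T-r)^3 = T^3 - r^3$ is traceless, so any cubic with a triple root has zero $T^2$-coefficient; and second, if $P(T)$ is traceless with any double root $r$, then $P(T) = (T-r)^2(T+2r) = T^3 + 2r^3$, so $r$ is in fact a triple root. When $\alpha_2 = 1$ and $e \geq 2$, $\bar a_2 \in k^\times$, so $P(T)$ cannot have a triple root: this entirely blocks the $IV^*$, $III^*$, $II^*$ branches (which correspond to Cases 8--10 of \Cref{p=5_pipelines} and require a triple root at $0$), while the $I_n^*$ branches for $n \geq 1$ remain accessible via the double-root shift of Case 7. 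When $\alpha_2 \geq 2$ and $e \geq 2$, $P(T)$ is traceless, so every double root is automatically a triple root: the $I_n^*$ chain for $n \geq 1$ collapses, but $IV^*$, $III^*$, $II^*$ become accessible with densities obtained by residue counting along the lines of \Cref{p=5_pipelines}, with the exponents of $q$ shifted to reflect the participation of $a_2$ in the $\Delta$-invariants. The remaining entries of \Cref{p3chi} follow by similar residue counting, and internal consistency is verified by summing each column of $\chi_{K,\mathfrak{p}}(T, c; \alpha_2)$ against the non-minimal proportion to be characterized in \Cref{p=3_markov}.
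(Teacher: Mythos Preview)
Your approach is essentially the paper's: run Tate's algorithm case by case on $F(\alpha_2)$, and your key structural insight for $e\geq 2$ --- that the Step-6 cubic has unit trace when $\alpha_2=1$ and zero trace when $\alpha_2\geq 2$, together with the characteristic-$3$ identities forcing any traceless double root to be triple --- is exactly right and cleanly explains the blocking patterns in the last two columns of Table~\ref{p3chi}.

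There is, however, a genuine gap in your $e=1$ analysis. You write $P(T) = T^3 + \bar a_2 T^2 + \overline{\pi^{-1}a_4}\,T + \overline{\pi^{-2}a_6}$ and conclude that the $T^2$-term vanishes whenever $\alpha_2\geq 1$, so that the dichotomy collapses to a traceless situation matching \Cref{p=5_pipelines}. But this form of $P(T)$ omits the Step-2 translation $x\mapsto x+u$ placing the singular point at the origin (here $u^3\equiv -a_6\pmod{\pi}$, with $u$ free as the units digit of $a_6$ varies). After that shift the trace is $A_2 = (3u+a_2)/\pi$, and when $e=1$ the factor $3/\pi$ is a \emph{unit}; hence $A_2$ ranges uniformly over $k$ as $u$ varies, regardless of $\alpha_2$. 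This is the actual reason the $\alpha_2=1$ and $\alpha_2\geq 2$ columns merge for $e=1$: both give a uniformly distributed trace, not a vanishing one. If you carry your stated reasoning through, your own characteristic-$3$ identity (traceless double root $\Rightarrow$ triple root) would force $\chi_{K,\mathfrak{p}}(I_n^*,c;\geq 1)=0$ for $e=1$, contradicting the nonzero entry $(q-1)^2/2q^{6+n}$ in Table~\ref{p3chi}. The apparent agreement with \Cref{p=5_pipelines} up to a factor of $q$ is real, but it arises because averaging over all traces in characteristic $3$ reproduces the traceless splitting statistics of characteristic $\neq 3$; the traceless count \emph{in} characteristic $3$ gives the $e\geq 2$, $\alpha_2\geq 2$ column instead, which is different.
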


\begin{proof}

We run Tate's algorithm over $F(\alpha_2)$ to compute $\chi_{K,\mathfrak{p}}(T, c;\alpha_2).$

\noindent \textbf{Step 1.} Suppose that $E$ terminates at Step 1. If $E \in F(0)$, then $\Delta \equiv -a_2^3 a_6 + a_2^2 a_4^2 - a_4^3 \pmod{\pi}$. Thus, for $\pi \nmid \Delta,$ there are $q-1$ choices for $a_6$ modulo $\pi$ for fixed $a_2$ and $a_4$. If $E \in F(1)$ or $E \in F(\geq 2)$, then $\Delta \equiv -a_4^3 \pmod{\pi}.$ Therefore, there are $q-1$ choices for $a_4$ modulo $\pi$ for fixed $a_2$ and $a_6$. Thus, $\chi_{K,\mathfrak{p}}(I_{0}, 1;0) = \chi_{K,\mathfrak{p}}(I_{0}, 1; 1) = \chi_{K,\mathfrak{p}}(I_{0}, 1;\geq 2) = \frac{q-1}{q}.$ Moving forward, for curves that pass Step 1, in $F(0),$ the units digit of $a_6$ is fixed for fixed $a_4$ and $a_2$, and in $F(\geq 1)$ the units digit of $a_4$ is $0$.

\noindent
\textbf{Step 2.} Suppose that the singular point of $E$ after reduction by $\pi$ is at $(u,0)$. Then, the translation $x \mapsto x + u$ yields the new model
\begin{equation}
    y^2 = (x+u)^3 + a_2(x+u)^2 + a_4(x+u) + a_6.
\end{equation}
Curves in $F(0)$ always terminate as $\pi \nmid b_2 = a_2 + 3u$. By Hensel's lemma, exactly $\frac{q-1}{q^{n+1}}$ of curves within $F(0)$ satisfy $v_\pi(\Delta) = n$ as $a_2$ varies. We also check that half of these values make $a_2$ a quadratic residue modulo $\pi$. Thus, $\chi_{K,\mathfrak{p}}(I_n, n; 0) = \chi_{K,\mathfrak{p}}(I_n, \varepsilon(n); 0) = \frac{q-1}{2q^{n+1}}$. 

On the other hand, if $E \in F(\geq 1)$, we always pass this step, so $\chi_{K,\mathfrak{p}}(I_n, n; 0) = \chi_{K,\mathfrak{p}}(I_n, \varepsilon(n); 0) = 0$. Since $(u,0)$ lies on the curve after reduction and $v_\pi(a_4) \geq 1$ from Step $1$, note $u^3 + a_6 \equiv 0 \pmod{\pi}$.

\noindent \textbf{Step 3.} If we stop at Step 3, $\pi^2 \nmid a_6 + a_4 u + a_2 u^2 + u^3.$ From Step $2,$ we have that $\pi \mid a_6 + a_4u + a_2u^2 + u^3.$ After fixing $u,a_2,a_4$, we have $q-1$ choices for $a_6 \pmod{\pi^2}$. As a result, $\chi (II, 1; \geq 1) = \frac{q-1}{q^2}$.

{
\noindent
\textbf{Step 4.} To terminate at Step 4, we must have $\pi^3 \nmid 4(a_2+3u)(a_6+a_4u+a_2u^2+u^3) - (3u^2+3a_2u+a_4)^2$. From Steps 2 and 3 respectively, we have that $\pi \mid a_2+3u$ and $\pi^2 \mid a_6 + a_4u + a_2u + u^3$. As such, we stop if $\pi^2 \nmid 3u^2 + 3a_2u + a_4$. We find that for fixed $a_2$ and $u$, there are $q(q-1)$ choices for $a_4 \pmod{\pi^2}$ and one choice for $a_6 \pmod{\pi^2}$. Hence, $\chi_{K,\mathfrak{p}}(III, 2; \geq 1) = \frac{q-1}{q^3}$.

\noindent \textbf{Step 5.} To stop at Step $5,$ $\pi \nmid \pi^{-2}(a_6 + a_4 u + a_2 u^2 + u^3)$. For fixed $a_2$ and $u,$ we have $q$ choices for $a_4 \pmod{\pi^2}$ and $q-1$ choices for $a_6 \pmod{\pi^3}$, half of which make $\pi^{-2}(a_6 + a_4 u + a_2 u^2 + u^3)$ a quadratic residue. Hence, $\chi_{K,\mathfrak{p}}(IV, 1; \geq 1) = \chi_{K,\mathfrak{p}}(IV, 3; \geq 1) = \frac{q-1}{2q^4}.$ Moving forward, we have $q$ choices for $a_4 \pmod{\pi^2}$ and $a_6 \pmod{\pi^3}$ for each choice of $a_2$, $u$, and the $\pi^2-$digit of $a_4$.

\noindent \textbf{Step 6.} We begin by writing $P(T) = T^3 + \frac{3u + a_2}{\pi} T^2 + \frac{3u^2 + 2 a_2 u + a_4}{\pi^2} T + \frac{u^3 + a_2 u^2 + a_4 u + a_6}{\pi^3} =: T^3 + A_2 T^2 + A_4 T + A_6 \pmod{\pi}.$ Note that fixing $A_4, A_6 \pmod{\pi}$ and a choice of $a_2$, $u$, and the $\pi^3-$digit of $a_4$ uniquely determines $a_4 \pmod{\pi^3}$ and $a_6 \pmod{\pi^4}$.

Suppose that we fix $A_2$ and consider $P(T)$ across $(A_4, A_6)$ modulo $\pi.$ For $P(T)$ to have $3$ distinct roots, $P(T)$ and $P'(T) = 2 A_2 x + A_4 \pmod{\pi}$ must not have a shared root. In other words, $A_4^3 + 2 A_2^2 A_4^2 + A_6 A_2^3 \not\equiv 0 \pmod{\pi}.$ As such, for a fixed $A_2$ modulo $\pi,$ there are $q$ choices for $A_4$ and $q-1$ choices of $A_6$ modulo $\pi$ such that $P(T)$ has three distinct roots. Suppose further that all three of $P(T)$'s roots are in $\mathbb{F}_{q}.$ If $P(T)$ is traceless, i.e., $A_2 \equiv 0 \pmod{\pi},$ there are $q$ choices for the first root and $q-1$ choices for the second root. Then, the third root is guaranteed to be different from the first two. If $P(T)$ has a non-zero trace, however, then there are $q$ choices for the first root and $q-3$ choices for the second root to guarantee that the fixed third root is distinct from the first two. Now, suppose that $P(T)$ has exactly one root in $\mathbb{F}_{q}.$ Then, once we fix an irreducible quadratic, the linear term is fixed. Therefore, no matter the trace, the number of $P(T)$ with exactly one root in $\mathbb{F}_{q}$ across $(A_4, A_6)$ modulo $\pi$ is $\frac{q^2-q}{2}.$ The last case is when $P(T)$ is irreducible. In $\mathbb{F}_{q^3},$ there are $\frac{q^2}{3}$ elements of a given trace as the linear map $\operatorname{Tr} : \mathbb{F}_{q^3} \to \mathbb{F}_q$ is surjective. The elements of $\mathbb{F}_{q}$ all have trace 0. Therefore, the number of irreducible $P(T)$ with zero trace is $\frac{q^2-q}{3}$, and the number of that with a non-zero trace is $\frac{q^2-q}{3}$. 

We first discuss $e=1$. Here, a fixed $A_2$ modulo $\pi$ uniquely determines $u$---the units digit of $a_6.$ We thus conclude from the aforementioned analysis that for $e=1$, $\chi_{K,\mathfrak{p}}(I_0^{*}, 1; \geq 1) = \frac{q^2-1}{3 q^6},$ $\chi_{K,\mathfrak{p}}(I_0^{*}, 2; \geq 1) = \frac{q-1}{2 q^5},$ and $\chi_{K,\mathfrak{p}}(I_0^{*}, 4; \geq 1) = \frac{(q-1)(q-2)}{6 q^6}.$ We now repeat for $e \geq 2$ and $E \in F(1)$. Here, the trace is fixed and necessarily non-zero. As such, $\chi_{K,\mathfrak{p}}(I_0^{*}, 1; 1) = \frac{1}{3 q^5},$ $\chi_{K,\mathfrak{p}}(I_0^{*}, 2; 1) = \frac{q-1}{2 q^6},$ and $\chi_{K,\mathfrak{p}}(I_0^{*}, 4; 1) = \frac{q-3}{6 q^6}.$ Finally, we discuss $e\geq 2$ and $E \in F(\geq 2).$ Here, the trace is necessarily zero. Thus, $\chi_{K,\mathfrak{p}}(I_0^{*}, 1; \geq 2) = \frac{q-1}{3 q^5},$ $\chi_{K,\mathfrak{p}}(I_0^{*}, 2; \geq 2) = \frac{q-1}{2 q^5},$ and $\chi_{K,\mathfrak{p}}(I_0^{*}, 4; \geq 2) = \frac{q-1}{6 q^5}.$ 

\noindent \textbf{Step 7.} If $E$ stops at Step 7, $P(T)$ has a double root that is not a triple root. This implies that $A_2 \not\equiv 0 \pmod{\pi}$, but that $A_2^3 + 2A_2^2A_4^2 + A_6A_2^3 \equiv 0 \pmod{\pi}$. For $A_2 \not\equiv 0 \pmod{\pi},$ $\alpha_2 = 1$ and $e \geq 2$, or $\alpha_2 \geq 1$ and $e=1$. Hence, $\chi_{K,\mathfrak{p}}(I_n^*, 2; \geq 1) = \chi_{K,\mathfrak{p}}(I_n^*, 4; \geq 1) = \frac{(q-1)^2}{2q^{6+n}}$ for $e=1$. If $E \in F(1)$ and $e \geq 2$, $E$ terminates at Step 7. Now suppose that $E \in F(\geq 2)$ and $e=1$. Then, for $A_2^3 + 2A_2^2A_4^2 + A_6A_2^3 \equiv 0 \pmod{\pi},$ we have $q-1$ choices for $A_6$ for each $A_4$. We then shift the double root of $P(T)$ to $0.$ The constant term of the shifted model is surjective over $a_6$. Therefore, the proportion of $E$ with valuation $3+n$ is $\frac{q-1}{q^n}$, half of which has $n=2$ and half of which has $n = 4$. Hence, $\chi_{K,\mathfrak{p}}(I_n^*, 2; 1) = \chi_{K,\mathfrak{p}}(I_n^*, 4; 1) = \frac{q-1}{2q^{5+n}}$ and $\chi_{K,\mathfrak{p}}(I_n^*, 2; \geq 2) = \chi_{K,\mathfrak{p}}(I_n^*, 4; \geq 2) = 0$ for $e \geq 2$.

\noindent \textbf{Step 8.} If $E$ reaches Step 8, $P(T)$ has a triple root. If so, then $e\geq 1$ and $E \in F(1)$, or $e\geq 2$ and $E \in F(\geq 2).$ Let the triple root of $P(T)$ be $v$, so $v^3 \equiv A_6 \pmod{\pi}$. We shift the triple root of $P(T)$ to $0$ via $x \mapsto x+v\pi$. Letting $s := u + v\pi$, $E$ becomes
\begin{equation}
    y^2 = (x+s)^3 + a_2(x+s)^2 + a_4(x+s) + a_6.\label{shift2p3}
\end{equation}
We stop if $\pi \nmid \pi^{-4}(s^3 + a_2s^2 + a_4s + a_6)$. We count by fixing $a_2$ and $s \pmod{\pi^2}$, the latter of which we have $q$ choices for $e = 1$ and $q^2$ choices for $e \geq 2$. Resultantly, there is one choice for $a_4 \pmod{\pi^3}$ and $q-1$ choices for $a_6 \pmod{\pi^5}$, half of which make $\pi^{-4}(s^3 + a_2s^2 + a_4s + a_6)$ a quadratic residue. Hence, we have $\chi_{K,\mathfrak{p}}(IV^*, 1; \geq 1) = \chi_{K,\mathfrak{p}}(IV^*, 3; \geq 1) = \frac{q-1}{2q^7}$ for $e=1$, and $\chi_{K,\mathfrak{p}}(IV^*, 1; \geq 2) = \chi_{K,\mathfrak{p}}(IV^*, 3; \geq 2) = \frac{q-1}{2q^6}$ for $e \geq 2$.

\noindent \textbf{Step 9.} Suppose that $E$ reaches Step 9. We stop if $\pi^4 \nmid 3s^2 + 2a_2s + a_4$. As in Step 8, we first choose $a_2$ and $s \pmod{\pi^2}$, the latter of which we have $q$ choices for $e = 1$ and $q^2$ choices for $e \geq 2$. These choices allow for $q-1$ choices of $a_4 \pmod{\pi^4}$ and a unique choice of $a_6 \pmod{\pi^5}$ Hence, we have $\chi_{K,\mathfrak{p}}(III^*, 2; \geq 1) = \frac{q-1}{q^8}$ for $e=1$, and $\chi_{K,\mathfrak{p}}(III^*, 2; \geq 2) = \frac{q-1}{q^7}$ for $e \geq 2$.

\noindent \textbf{Step 10.} We stop at Step 10 if $\pi \nmid \pi^{-5}(s^3 + a_2s^2 + a_4s + a_6)$. Once again, we first choose $a_2$ and $s$, the latter of which we have $q$ choices for $e = 1$ and $q^2$ choices for $e \geq 2$. These choices give a unique choice of $a_4 \pmod{\pi^4}$ and $q-1$ choices of $a_6 \pmod{\pi^6}$. Hence, we have $\chi_{K,\mathfrak{p}}(II^*, 1; \geq 1) = \frac{q-1}{q^9}$ for $e=1$, and $\chi_{K,\mathfrak{p}}(II^*, 1; \geq 2) = \frac{q-1}{q^8}$ for $e \geq 2$.

\noindent \textbf{Step 11.} Suppose $E$ is non-minimal. If $s$ is fixed, then $(a_4, a_6)$ is fixed up to modulo $\pi^4$ and $\pi^6$ respectively. When $e= 1$ and $E \in F(\geq 1)$, there are $q$ choices of $s$ modulo $\pi^2$. Therefore, $\frac{1}{q^9}$ of the curves are non-minimal. When $e \geq 2$ and $E \in F(\geq 2)$, there are $q^2$ choices for $s.$ Hence, $\frac{1}{q^8}$ curves are non-minimal.}
\end{proof}

We now justify how we reclassify the non-minimal models of one family as another family of curves. We first note that the models in $F(\geq e)$ and $F(\infty)$ have the same local properties in the following sense. 

\begin{remark}
Because linear transformations do not change the local data of an elliptic curve, instead of computing local densities on short Weierstrass models $F(\infty)$, we may compute the local densities on the set
\begin{equation}
\{\left(E : y^2 = (x+t)^3 + a_4(x+t) + a_6\right) : t, a_4, a_6 \ \text{integral} \}.
\label{eq:reshiftp3}
\end{equation}
Since $a_4$ and $a_6$ are drawn uniformly from the residues modulo $\pi^k$ for all $k$ and the $x^2$-coefficient varies uniformly across multiples of $3$, the set in \cref{eq:reshiftp3} is precisely $F(\geq e)$.    
\end{remark}

 Thus, moving forward, if $\alpha_2 \geq e$, then we work with the curves in $F(\geq e)$ instead of $F(\alpha_2)$. We first show that when $\alpha_2 < e$, the local densities at the non-minimal models of a family $F(\alpha_2)$ exactly match the local densities at the family $F(\alpha_2-2)$. To do this, we establish a map which sends a non-minimal model in $F(\alpha_2)$ to another isomorphic model in $F(\alpha_2-2)$, induced by the transformation at Step 11. Likewise, we demonstrate that the local densities at the non-minimal models of $F(\geq e)$ match the local densities at $F(\geq e-2)$. 
 
\begin{lemma} \label{p=3_markov}
We have surjective, $q^2$-to-$1$ maps 
\begin{itemize}
    \item between the set of non-minimal models in $F(\alpha_2)$ and the set $F(\alpha_2-2)$ for each $\alpha_2 < e$ and
    \item between the set of non-minimal models in $F(\geq e)$ and the set $F(\geq e-2)$
\end{itemize}
that each sends $E$ to its transformation $E'$ after passing Step 11.
\end{lemma}

\begin{proof}
First, suppose that $\alpha_2 < e$. Then fix a set of $q^2$ representatives to be the residues modulo $\pi^2$. Recall that for non-minimal $E(a_2, a_4, a_6)$, Tate's algorithm produces a unique residue $s \pmod{\pi^2}$ for which
\begin{equation}
    y^2 = (x+s)^3 + a_2(x+s)^2 + a_4(x+s) + a_6
\end{equation}
has the coefficient of $x^i$ divisible by $\pi^i$ for $i=2,4,6$. Hence, each non-minimal model $E(a_2, a_4, a_6) \in F(\alpha_2)$ is sent to 
\begin{equation}
    E\left(\frac{3s+a_2}{\pi^2}, \frac{3s^2+2a_2s+a_4}{\pi^4}, \frac{s^3+a_2s^2+a_4s+a_6}{\pi^6}\right) \in F(\alpha_2-2),
\end{equation} and the map is well-defined.

Conversely, given a model $E'(a_2', a_4', a_6') \in F(\alpha_2-2)$, choosing $s$ uniquely determines $a_2, a_4, a_6$, and moreover $v_\pi(a_2) = \alpha_2$. Hence, $E'$ has exactly $q^2$ preimages in $F(\alpha_2)$, as we had sought.

The existence proof of a surjective, $q^2$-to-$1$ map between the set of non-minimal models in $F(\geq e)$ and the set $F(\geq e-2)$ is analogous to that of the case $\alpha_2 < e$ and is thus omitted.
\end{proof}

We now complete our classification for $\mathfrak{p} \mid (3)$ in \Cref{p=3_total}. A key ingredient is the underlying Markov chain structure that helps us study how non-minimal curves loop back into Tate's algorithm. Lemma \ref{p=3_markov} allows us to identify the non-minimal curves of $F(0), F(1), \dots, F(\geq e)$ to be identified with other families. In particular, we see from \Cref{p=3_markov} that the non-minimal curves in $F(\alpha_2)$ for $\alpha < e$ are always transformed into curves in $F(\alpha_2-2)$, which as shown in Step 11 of the proof of ~\Cref{p=3_pipelines} occurs with probability $\frac{1}{q^8}$. We also see from \Cref{p=3_markov} that the non-minimal curves in $F(\geq e)$ with probability $\frac{1}{q^2}$ loop back to itself, with probability  $\frac{q-1}{q^2}$ loop to $F(e-1),$ and with probability $\frac{q-1}{q}$ transform to $F(e-2)$. Thus, $F(\geq e)$ maps into the set $F(\geq e)$ with proportion $\frac1{q^8} \cdot \frac1{q^2} = \frac1{q^{10}}$, the set $F(e-1)$ with proportion $\frac1{q^8} \cdot \frac{q-1}{q^2} = \frac{q-1}{q^{10}}$, and the set $F(e-2)$ with proportion $\frac1{q^8} \cdot \frac{q-1}{q} = \frac{q-1}{q^9}$. Finally, \Cref{p=3_pipelines} determines the local densities at each $F(\alpha_2)$ for $\alpha_2 = 1$, $2$, \dots, $e-1$, and the set $F(\geq e)$. We collate all of this information in Figure~\ref{markovp3}.

\begin{figure}
\begin{minipage}{.3\textwidth}
\centering
\scalebox{0.9}{\begin{tikzpicture}[->, >=stealth', auto, semithick, node distance=3.4cm]
\tikzstyle{every state}=[draw, rectangle, minimum height = 1cm, minimum width = 1cm, fill=white,draw=black,thick,text=black,scale=1]

\node[state]    (A)                     {Terminate};
\node[state]    (B)[below left of=A]   {$F(0)$};
\node[state]    (C)[above left of=A]   {$F(\ge 1)$};
\path
(B) edge                node{$1$}           (A)
(C) edge[loop above]    node{$\frac{1}{q^{10}}$}     (C)
    edge[bend right]    node{$\frac{q-1}{q^{10}}$}      (B)
    edge
    node{$1 - \frac{1}{q^{9}}$}         (A);
\end{tikzpicture}}
\end{minipage}
\quad \quad
\begin{minipage}{.3\textwidth}
\centering
\scalebox{0.77}{\begin{tikzpicture}[->, >=stealth', auto, semithick, node distance=1.8cm]
\tikzstyle{every state}=[draw, rectangle, minimum height = 1cm, minimum width = 1cm, fill=white,draw=black,thick,text=black,scale=1]

\node[state]    (A)                     {Terminate};
\node[state]    (D)[above left of=A, xshift=-2cm, yshift=1cm]   {$F(e-3)$};
\node[state]    (B)[above of=D]   {$F(e-2)$};
\node[state]    (F)[above of=B]   {$F(e-1)$};
\node[state]    (G)[above of=F]   {$F(\ge e)$};
\node[state]    (C)[below left of=A, xshift=-2cm, yshift=-1cm]   {$F(2)$};
\node[state]    (I)[below of=C]   {$F(1)$};
\node[state]    (J)[below of=I]   {$F(0)$};
\path (D) -- node[auto=false]{\Huge\mydots} (C);

\path
(I) edge[bend right=45, swap]                node{$1$}           (A)
(J) edge[bend right=65, swap]               node{$1$}           (A)

(G) edge[loop above]    node{$\frac{1}{q^{10}}$}     (G)
    edge[bend left]    node{$\frac{q-1}{q^{10}}$}      (F)
    edge[bend left=80]     node{$\frac{q-1}{q^{9}}$}      (B)
    edge[bend left=63]
    node{$1 - \frac{1}{q^{8}}$}         (A)
(B) edge[bend left=15]
    node{$1 - \frac{1}{q^{8}}$}         (A)
(D) edge[bend right] 
    node{$1 - \frac{1}{q^{8}}$}         (A)
    
(F) edge[bend right=85]    node{$\frac{1}{q^{8}}$}      (D)
    edge[bend left=40]
    node{$1 - \frac{1}{q^{8}}$}         (A)
(C) edge[bend right=85]  node{$\frac{1}{q^{8}}$}      (J)
    edge[bend left, swap]
    node{$1 - \frac{1}{q^{8}}$}         (A);

\path (B)  edge[bend right=60] node{$\frac{1}{q^{8}}$} ([xshift=-12mm]$ (C) !.60! (D) $);

\path ([xshift=-12mm]$ (C) !.40! (D) $) edge[bend right=60] node{$\frac{1}{q^{8}}$} (I);

\end{tikzpicture}}
\end{minipage}
\caption{The Markov chain structure when $\mathfrak{p} \mid (3)$ for $e = 1$ (left) and $e \ge 2$ (right).}
\label{markovp3}
\end{figure}
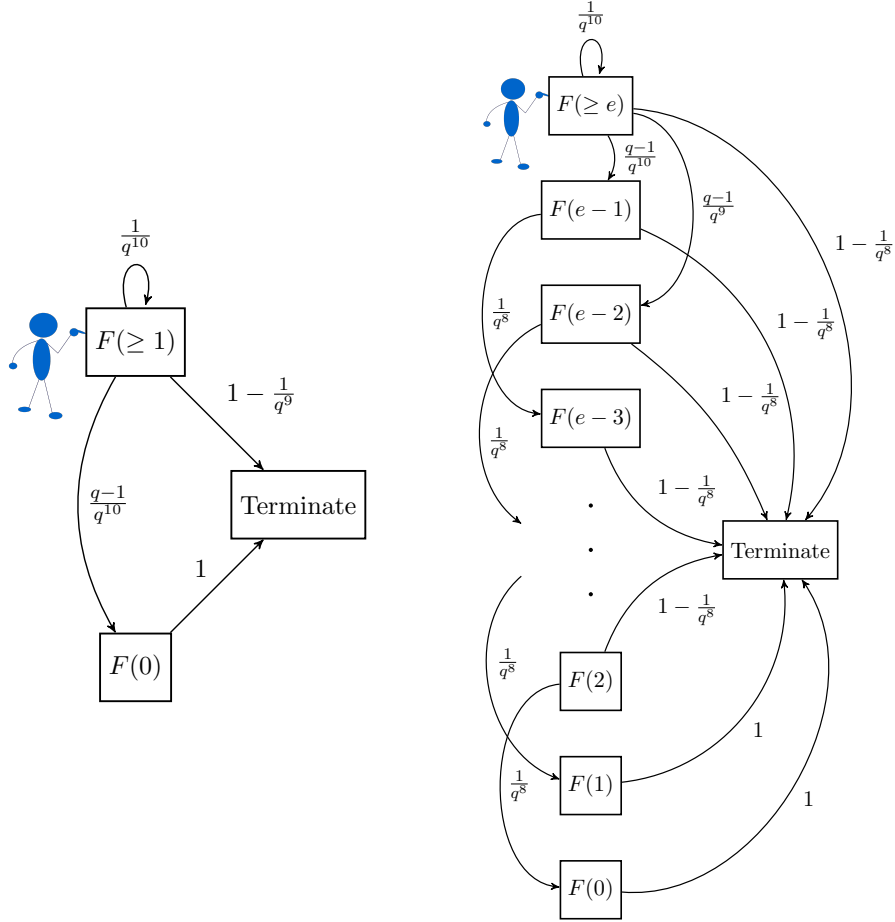

\begin{proposition}
\label{p=3_total}
If $\mathfrak{p} \mid (3)$ is a prime ideal in $K$ and $c \geq 1,$ then letting $q := N_{K/\mathbb{Q}}(\mathfrak{p})$ and $e = 1$ we have
\[
\delta_{K,\mathfrak{p}}(c)=
{\small
\begin{cases}
\displaystyle\ 1 - \frac{(q - 1)(6q^{10} + 9q^{9} + 7q^{8} + 8q^7 + 7q^6 + 9q^5 + 6q + 3)}{6q^2(q + 1)(q^{10} - 1)} \ \ \ \ &{\text if }\ c=1,\\[14pt]
\displaystyle \ \ \ \frac{(q - 1)(2q^{11} + 2q^{10} + q^9 + 2q^8 + q^7 + 2q^6 + 2q^5 + 2q^2 - 1)}{2q^3(q + 1)(q^{10}-1)}  &{\text if}\  c=2,\\[14pt]
\displaystyle\ \ \  \frac{(q - 1)(q^{10} + q^7 + q - 1)}{2q^4(q^{10}-1)}   \ \ \ \ &{\text if}\ c=3,\\[14pt]
\displaystyle\ \ \  \frac{(q - 1)(q^{10} + q^9 + 3q - 3)}{6q^5(q^{10}-1)}  \ \ \ \ &{\text if}\ c=4,\\[14pt]
\displaystyle \ \ \  \frac{(q - 1)^2}{2q^{c+1}(q^{10}-1)}   \ \ \ \ &{\text if }\ c\geq 5.\\
\end{cases}}
\]

If $e \ge 2$ is even, we have
\[
\delta_{K,\mathfrak{p}}(c)=
{\small
\begin{cases}
 \displaystyle\ 1 - (q - 1) \Bigg[\displaystyle\ \frac{\left(6q^{14} + 9q^{13} + 13q^{12} + 16q^{11} + 22(q^{10} + q^{9} + q^{8})\right)}{6(q + 1)(q^2 + 1)(q^4 + 1)(q^{10} - 1)} + O\left(\frac{1}{q^{10}}\right) \Bigg]&{\text if}\  c=1,\\\\[6pt]
\displaystyle \ \ \ \frac{(q - 1)(2q^{13} + 3q^{11} + 5q^{9} + 5q^{7} + 5q^{5} + 3q^{3} + 2q)}{2(q^2 + 1)(q^4 + 1)(q^{10} - 1)} + O\left( \frac{1}{q^{4e+3}}\right)\ \ \ \ &{\text if}\  c=2,\\\\[6pt]
\displaystyle\ \ \  \frac{(q - 1)(q^{4e+8} + q^{4e+6} + q^{4e+4} + q^{4e+2} + q^{4e} - q^6 + q^4 - q^2)}{2q^{4e - 2}(q^4 + 1)(q^{10}-1)}   \ \ \ \ &{\text if}\ c=3,\\\\[6pt]
\displaystyle\ \ \  \frac{(q-1)(q^{11} + q^{9} + q^{7} + q^{5} + q^{3})}{6(q^2 + 1)(q^4 + 1)(q^{10}-1)} + O\left(\frac{1}{q^{4e+4}} \right)  \ \ \ \ &{\text if}\  c=4,\\\\[6pt]
\displaystyle \ \ \  \frac{(q - 1)^2}{2q^{4e + c - 8}(q^{10}-1)}   \ \ \ \ &{\text if }\ c\geq 5. \\
\end{cases}}
\]

If $e > 2$ is odd, we have
$$
\delta_{K,\mathfrak{p}}(c)=
{\small
\begin{cases}
 \displaystyle\ 1 - (q - 1) \Bigg[\displaystyle\ \frac{\left(6q^{14} + 9q^{13} + 13q^{12} + 16q^{11} + 22(q^{10} + q^{9} + q^{8}) \right)}{6(q + 1)(q^2 + 1)(q^4 + 1)(q^{10} - 1)} + O\left(\frac{1}{q^{10}} \right) \Bigg]\ \ \ \ &{\text if}\  c=1, \\\\[6pt] 
\displaystyle \ \ \ \frac{(q - 1)(2q^{13} + 3q^{11} + 5q^{9} + 5q^{7} + 5q^{5} + 3q^{3})}{2(q^2 + 1)(q^4 + 1)(q^{10} - 1)} + O\left(\frac{1}{q^{4e-2}} \right) \ \ \ \ &{\text if}\  c=2, \\\\[6pt]
\displaystyle\ \ \  \frac{(q - 1)(q^{10} + q^{8} + q^{6} + q^{4} + q^{2})}{2(q^4 + 1)(q^{10}-1)} + O\left(\frac{1}{q^{4e-1}} \right) \ \ \ \ &{\text if}\  c=3, \\\\[6pt]
\displaystyle\ \ \  \frac{(q-1)(q^{11} + q^{9} + q^{7} + q^{5} + q^{3})}{6(q^2 + 1)(q^4 + 1)(q^{10}-1)} + O\left(\frac{1}{q^{4e+1}} \right) \ \ \ \ &{\text if}\  c=4, \\\\[6pt]
\displaystyle \ \ \  \frac{(q - 1)^2}{2q^{4e + c - 3}(q^{10}-1)}   \ \ \ \ &{\text if }\ c\geq 5.\\
\end{cases}
}
$$

The exact proportions are given in the extended version of the paper \cite{choi2021tamagawa}.
\end{proposition}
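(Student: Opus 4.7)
The plan is to combine the Markov chain in \Cref{markovp3} with the local termination densities of \Cref{p=3_pipelines}. By the re-shift identity in \cref{eq:reshiftp3}, short Weierstrass models have the same local data as unit mass placed on $F(\geq e)$, so I would initialize the chain there. Writing $v_{\alpha_2}$ for the total proportional mass that flows through family $F(\alpha_2)$, summed across all iterations of Tate's algorithm, the identity
\[
\delta_{K,\mathfrak{p}}(c) \;=\; \sum_{\alpha_2} v_{\alpha_2} \sum_T \chi_{K,\mathfrak{p}}(T, c; \alpha_2)
\]
reduces the proposition to solving for the $v_{\alpha_2}$ and then collecting terms via \Cref{p3chi}.

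To compute the $v_{\alpha_2}$, I would read off the transition probabilities in \Cref{markovp3}. Only $F(\geq e)$ carries a self-loop, at rate $1/q^{10}$, so the geometric sum gives $v_{\geq e} = q^{10}/(q^{10}-1)$. For $e=1$ the chain collapses to $\{F(0),F(\geq 1)\}$ and $v_0 = (q-1)/(q^{10}-1)$. For $e \geq 2$, mass flows from $F(\geq e)$ into $F(e-1)$ at rate $(q-1)/q^{10}$ and into $F(e-2)$ at rate $(q-1)/q^{9}$, while each intermediate family $F(\alpha_2)$ with $2 \leq \alpha_2 < e$ feeds only $F(\alpha_2-2)$ at rate $1/q^{8}$. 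Unwinding the resulting descent recursion yields
\[
v_{e-1-2k} \;=\; \frac{q-1}{q^{10}-1}\, q^{-8k}, \qquad v_{e-2-2k} \;=\; \frac{q(q-1)}{q^{10}-1}\, q^{-8k}.
\]
The parity of $e$ determines which of the two chains terminates at $F(0)$ versus $F(1)$, producing the split of the conclusion into the $e=1$, $e$ even, and $e$ odd cases.

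The remaining step is assembly: multiply each $v_{\alpha_2}$ by $\sum_T \chi_{K,\mathfrak{p}}(T,c;\alpha_2)$ using \Cref{p3chi} and sum. The column structure of \Cref{p3chi} forces a natural three-way split, in which $F(\geq e)$ together with every intermediate $F(\alpha_2)$ for $2 \leq \alpha_2 < e$ shares the $\alpha_2 \geq 2$ column, $F(1)$ uses the special $\alpha_2 = 1$ column, and $F(0)$ admits only $I_n$-type contributions. For $n \geq 5$ this is straightforward: since the $I_n^*$ Tamagawa numbers lie in $\{2,4\}$, only the $I_n$ Kodaira type contributes, and its density lives entirely on the $\alpha_2 = 0$ column, so only $v_0$ enters, matching the closed forms stated in the proposition. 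The main obstacle is the algebraic bookkeeping for $n \in \{1,2,3,4\}$, where several Kodaira types contribute across multiple families; the finite sums $\sum_k q^{-8k}$ of length governed by $\lfloor e/2 \rfloor$ produce the $O(1/q^{4e+\cdots})$ remainders in the stated formulas, precisely encoding the $F(0)$ and $F(1)$ contributions suppressed by $q^{-4\lfloor e/2\rfloor}$. Placing everything over the common denominator $(q^2+1)(q^4+1)(q^{10}-1)$ and performing routine simplification then yields the closed forms claimed, with the fully expanded expressions collected in \Cref{charts}.
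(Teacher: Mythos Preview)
Your proposal is correct and follows essentially the same approach as the paper: initialize mass at $F(\geq e)$, compute the visitation weights $v_{\alpha_2}$ from the Markov chain in \Cref{markovp3} (your geometric sum $v_{\geq e}=q^{10}/(q^{10}-1)$ and descent recursion match the paper's explicit sums exactly), and then assemble $\delta_{K,\mathfrak{p}}(c)$ by weighting the columns of \Cref{p3chi}. Your parity analysis for which chain hits $F(0)$ versus $F(1)$, and your explanation of why only $v_0$ contributes for $n\geq 5$, are precisely the casework the paper carries out.
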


\begin{proof}

Refer to Figure~\ref{markovp3}. Because we begin with a short Weierstrass form, we start at the node $F(\geq e)$. Fix a Kodaira type $T$ and Tamagawa number $c$ over which we compute the local density of curves with this data. We perform casework on the family we terminate in.

First, suppose that $e = 1$. The proportion of curves that terminate in $F(\geq 1)$ with Kodaira type $T$ and Tamagawa number $c$ is 
\[\left(1+ \frac{1}{q^{10}} + \frac{1}{q^{20}} + \ldots\right)\chi_{K,\mathfrak{p}}(T, c; \geq 1).\] 
On the other hand, the proportion of curves that terminate in $F(0)$ with our prescribed local data is 
\[\left(1+ \frac{1}{q^{10}} + \frac{1}{q^{20}} + \ldots\right)\frac{q-1}{q^{10}}\chi_{K,\mathfrak{p}}(T, c; 0).\]

Now, suppose that $e \geq 2$. The proportion of curves that terminate in $F(2), F(3), \dots, F(\geq e)$ with Kodaira type $T$ and Tamagawa number $c$ is
\[\left( 1 + \frac{1}{q^{10}} + \frac{1}{q^{20}} + \dots \right) \left(1 + \sum_{k=0} ^{\floor{(e-1)/2}} \frac{q-1}{q^{8k+10}} + \sum_{k=0} ^{\floor{(e-2)/2}} \frac{q-1}{q^{8k+9}} \right)\chi_{K,\mathfrak{p}}(T, c; \geq2).\]
Second, the proportion of curves that terminate in $F(1)$ with our prescribed local data is 
\[\begin{cases}\left(1+\frac{1}{q^{10}} + \frac{1}{q^{20}} \dots\right) \frac{q-1}{q^{4e+2}} \chi_{K,\mathfrak{p}}(T, c; 1) & \text{if }2 \mid e \\
\left(1+\frac{1}{q^{10}} + \frac{1}{q^{20}} \dots\right)\frac{q-1}{q^{4e-3}}\chi_{K,\mathfrak{p}}(T, c; 1) & \text{if }2 \nmid e\end{cases}.\] Finally, the proportion of curves that terminate in $F(0)$ with our prescribed local data is \[\begin{cases}\left(1+\frac{1}{q^{10}} + \frac{1}{q^{20}} \dots\right)\frac{q-1}{q^{4e+1}}\chi_{K,\mathfrak{p}}(T, c; 0) & \text{if }2 \mid e \\
\left(1+\frac{1}{q^{10}} + \frac{1}{q^{20}} \dots\right)\frac{q-1}{q^{4e+6}}\chi_{K,\mathfrak{p}}(T, c; 0) & \text{if }2 \nmid e
\end{cases}.\]

The values $\delta_{K,\mathfrak{p}}(T, c)$ are provided in \Cref{p=3_pipelines}. We sum these proportions over all Kodaira types $T$ with Tamagawa number $c$ to get the total density $\delta_{K,\mathfrak{p}}(c)$.
\end{proof}

\section{Classification for $\mathfrak{p} \mid (2)$}
\label{TateP2}

In this section, we calculate $\delta_{K,\mathfrak{p}}(c_{\mathfrak{p}})$ for $\mathfrak{p} \mid (2).$ Unlike in \Cref{TateP5} and in \Cref{TateP3}, Tate's algorithm may introduce a non-zero $a_1,$ $a_2,$ and $a_3$ coefficient on non-minimal $E(a_4, a_6)$ that loops back into the algorithm. However, even if $E$ is transformed after Step 11 into $E'(a_1', a_2', a_3', a_4', a_6')$, as in \Cref{E1346}, with $a_2' \neq 0$, the translation $x \mapsto x - a_2/3$ eliminates the $a_2$ coefficient of $E'$ without changing the local data of the curve. Therefore, to study how $\mathfrak{p}-$non-minimal elliptic curves loop back into Tate's algorithm, we study the action of Tate's algorithm on the larger class of elliptic curves $E = E(a_1, a_3, a_4, a_6)$, defined as in \Cref{E1346}. By convention, we re-eliminate the $a_2$ coefficient after passing Step 11 before re-running Tate's algorithm. 

Upon running Tate's algorithm, we find that the sets of elliptic curves $E(a_1, a_3, a_4, a_6)$ across $(a_4, a_6)$ with fixed $a_1$ and $a_3$ behave similarly if they have the same $\alpha_1 := v_\pi(a_1)$ and $\alpha_3 := v_\pi(a_3)$. Moreover, the short Weierstrass elliptic curve are exactly the $E(a_1,a_3,a_4,a_6)$ with $\alpha_1 = \alpha_3 = \infty$. We thus group elliptic curves entering Tate's algorithm into families depending on $\alpha_1$ and $\alpha_3$ as follows.

\begin{definition}\label{family2}
The \emph{2-family} $F(\alpha_1, \alpha_3)$ refers to the set of models
\[F(\alpha_1, \alpha_3) := \{E(a_1, a_3, a_4, a_6) : v_\pi(a_1) = \alpha_1, v_{\pi}(a_3) = \alpha_3; \ a_4, a_6 \ \text{integral}\}.\]
The 2-family $F(\geq \alpha_1, \geq \alpha_3)$ refers to the set $\bigsqcup_{\alpha \geq \alpha_1} \bigsqcup_{\beta \geq \alpha_3} F(\alpha, \beta)$.
\end{definition}

For brevity, we refer to 2-families as families for the rest of this section. The rest of the section is structured similarly to \Cref{TateP3}. In \Cref{p=2_pipelines}, we run Tate's algorithm to calculate $\psi_{K,\pp}(T, c;\alpha_1,\alpha_3) := \delta'_{K,\pp}(T, c;\alpha_1,\infty,\alpha_3)$, which is the proportion of $\pp-$minimal models with Kodaira type $T$ and Tamagawa number $c$ for each family $F(\alpha_1,\alpha_3)$. Then, we show in Lemma \ref{p=2_markov} that the non-minimal models from certain families may themselves be viewed as a family. The analysis of non-minimal models in this section is more involved than the analysis in \Cref{TateP3} for two reasons: there are now two relevant valuations $\alpha_1$ and $\alpha_3$, and we have to incorporate the shift $x \mapsto x - a_2/3$ after Step 11. Finally, in \Cref{p=2_total}, we leverage these lemmas to form a Markov chain whose nodes are families and whose edges represent the reclassification of non-minimal models, which we use to compute the local proportion $\delta_{K,\pp}(c)$. 

\begin{lemma}
\label{p=2_pipelines}

Suppose that $\mathfrak{p} \subseteq K$ is above $2$ with ramification index $e$. Then for $F(\alpha_1, \alpha_3)$, the local densities $\psi_{K,\mathfrak{p}}(T, c_{\mathfrak{p}}; \alpha_1, \alpha_3)$ is as provided in Table~\ref{p2psi1} for $e= 1, 2$ and Table~\ref{p2psi2} for $e \geq 3$. 

{\tiny 
\setlength{\tabcolsep}{6pt} 
\renewcommand{\arraystretch}{2.}
\begin{center}
\begin{table}[ht!]
\begin{tabular}{|c|c||c|c|c||c|c|c|c|c|}
 \hline
  & & \multicolumn{3}{c||}{$e=1$}  &  \multicolumn{5}{c|}{$e=2$} \\ \hline
 Type & $c_\pp$ & $\alpha_1 = 0$ & \makecell{$\alpha_1 \geq 1$\\$\alpha_3 = 0$} & \makecell{$\alpha_1 \geq 1$ \\ $\alpha_3 \geq 1$} & $\alpha_1=0$ & \makecell{$\alpha_1 \geq 1$\\$\alpha_3 = 0$} & \makecell{$\alpha_1 = 1$\\$\alpha_3 \geq 1$} & \makecell{$\alpha_1 \geq 2$ \\ $\alpha_3 = 1$} & \makecell{$\alpha_1 \geq 2$ \\ $\alpha_3 \geq 2$} \\ \hline \hline
 $I_0$ & 1 & $\frac{q-1}{q}$ & $1$ & $0$ & $\frac{q-1}{q}$ & $1$ & $0$ & $0$ & $0$ \\\hline
 $I_1$ & 1 & $\frac{q-1}{q^2}$ & 0 & 0 & $\frac{q-1}{q^2}$ & 0 & 0 & 0 & 0 \\\hline
 $I_2$ & 2 & $\frac{q-1}{q^3}$ & 0 & 0 & $\frac{q-1}{q^3}$ & 0 & 0 & 0 & 0 \\\hline
 $I_{n\ge3}$ & n & $\frac{q-1}{2q^{n+1}}$ & 0 & 0 & $\frac{q-1}{2q^{n+1}}$ & 0 & 0 & 0 & 0 \\\hline
 $I_{n\ge3}$ & $\varepsilon(n)$ & $\frac{q-1}{2q^{n+1}}$ & 0 & 0 & $\frac{q-1}{2q^{n+1}}$ & 0 & 0 & 0 & 0 \\\hline
 $II$ & 1 & 0 & 0 & $\frac{q-1}q$ & 0 & 0 & $\frac{q-1}q$ & $\frac{q-1}q$ & $\frac{q-1}q$ \\ \hline 
 $III$ & 2 & 0 & 0 & $\frac{q-1}{q^2}$ & 0 & 0 & $\frac{q-1}{q^2}$ & $\frac{q-1}{q^2}$ & $\frac{q-1}{q^2}$ \\ \hline
 $IV$ & 1 & 0 & 0 & $\frac{q-1}{2q^3}$ & 0 & 0 & $\frac{q-1}{2q^3}$ & $\frac{1}{2q^2}$ & 0 \\ \hline
 $IV$ & 3 & 0 & 0 & $\frac{q-1}{2q^3}$ & 0 & 0 & $\frac{q-1}{2q^3}$ & $\frac{1}{2q^2}$ & 0 \\ \hline
 $I_0^*$ & 1 & 0 & 0 & $\frac{q^2-1}{3q^5}$ & 0 & 0 & $\frac{q^2-1}{3q^5}$ & 0 & $\frac{q^2-1}{3q^4}$ \\ \hline
 $I_0^*$ & 2 & 0 & 0 & $\frac{q-1}{2q^4}$ & 0 & 0 & $\frac{q-1}{2q^4}$ & 0 & $\frac{q-1}{2q^3}$ \\ \hline
 $I_0^*$ & 4 & 0 & 0 & $\frac{(q-1)(q-2)}{6q^5}$ & 0 & 0 & $\frac{(q-1)(q-2)}{6q^5}$ & 0 & $\frac{(q-1)(q-2)}{6q^4}$ \\ \hline
 $I_{n\geq 1}*$ & 2 & 0 & 0 & $\frac{(q-1)^2}{2q^{5+n}}$ & 0 & 0 & $\frac{(q-1)^2}{2q^{5+n}}$ & 0 & $\frac{(q-1)^2}{2q^{4+n}}$ \\ \hline
 $I_{n \geq 1}*$ & 4 & 0 & 0 & $\frac{(q-1)^2}{2q^{5+n}}$ & 0 & 0 & $\frac{(q-1)^2}{2q^{5+n}}$ & 0 & $\frac{(q-1)^2}{2q^{4+n}}$ \\ \hline
 $IV*$ & 1 & 0 & 0 & $\frac{q-1}{2q^6}$ & 0 & 0 & $\frac{q-1}{2q^6}$ & 0 & $\frac{q-1}{2q^5}$ \\ \hline
 $IV*$ & 3 & 0 & 0 & $\frac{q-1}{2q^6}$ & 0 & 0 & $\frac{q-1}{2q^6}$ & 0 & $\frac{q-1}{2q^5}$ \\ \hline
 $III*$ & 2 & 0 & 0 & $\frac{q-1}{q^7}$ & 0 & 0 & $\frac{q-1}{q^7}$ & 0 & $\frac{q-1}{q^6}$ \\ \hline 
 $II*$ & 1 & 0 & 0 & $\frac{q-1}{q^8}$ & 0 & 0 & $\frac{q-1}{q^8}$ & 0 & $\frac{q-1}{q^7}$ \\ \hline 

\end{tabular}
\smallskip
\caption{The values of $\psi_{K,\mathfrak{p}}(T, c;\alpha_1, \infty, \alpha_3)$ for $\pp \mid (2).$}
\label{p2psi1}
\end{table}
\end{center}
}

{\tiny 
\setlength{\tabcolsep}{6pt} 
\renewcommand{\arraystretch}{2.}
\begin{center}
\begin{table}[ht!]
\begin{tabular}{|c|c||c|c|c|c|c|c|c|}
 \hline
  \multicolumn{9}{|c|}{$e\geq 3$} \\ \hline
 Type & $c_\pp$ & $\alpha_1=0$ & \makecell{$\alpha_1 \geq 1$\\$\alpha_3 = 0$} & \makecell{$\alpha_1 = 1$\\$\alpha_3 \geq 1$} & \makecell{$\alpha_1 \geq 2$ \\ $\alpha_3 = 1$} & \makecell{$\alpha_1 = 2$ \\ $\alpha_3 \geq 2$} & \makecell{$\alpha_1 \geq 3$ \\ $\alpha_3 = 2$} & \makecell{$\alpha_1 \geq 3$ \\ $\alpha_3 \geq 3$} \\ \hline \hline
 $I_0$ & 1 & $\frac{q-1}q$ & 1 & 0 & 0 & 0 & 0 & 0 \\\hline
 $I_1$ & 1 & $\frac{q-1}{q^2}$ & 0 & 0 & 0 & 0 & 0 & 0 \\\hline
 $I_2$ & 2 & $\frac{q-1}{q^3}$ & 0 & 0 & 0 & 0 & 0 & 0 \\\hline
 $I_{n\ge3}$ & n & $\frac{q-1}{2q^{n+1}}$ & 0 & 0 & 0 & 0 & 0 & 0 \\\hline
 $I_{n\ge3}$ & $\varepsilon(n)$ & $\frac{q-1}{2q^{n+1}}$ & 0 & 0 & 0 & 0 & 0 & 0 \\\hline
 $II$ & 1 & 0 & 0 & $\frac{q-1}q$ & $\frac{q-1}q$ & $\frac{q-1}q$ & $\frac{q-1}q$ & $\frac{q-1}q$\\\hline
 $III$ & 2 & 0 & 0 & $\frac{q-1}{q^2}$ & $\frac{q-1}{q^2}$ & $\frac{q-1}{q^2}$ & $\frac{q-1}{q^2}$ & $\frac{q-1}{q^2}$\\\hline
 $IV$ & 1 & 0 & 0 & $\frac{q-1}{2q^3}$ & $\frac{1}{2q^2}$ & 0 & 0 & 0\\\hline 
 $IV$ & 3 & 0 & 0 & $\frac{q-1}{2q^3}$ & $\frac{1}{2q^2}$ & 0 & 0 & 0\\\hline
 $I_0^*$ & 1 & 0 & 0 & $\frac{q^2-1}{3q^5}$ & 0 & $\frac{q^2-1}{3q^4}$ & $\frac{q^2-1}{3q^4}$ & $\frac{q^2-1}{3q^4}$\\\hline
 $I_0^*$ & 2 & 0 & 0 & $\frac{q-1}{2q^4}$ & 0 & $\frac{q-1}{2q^3}$ & $\frac{q-1}{2q^3}$ & $\frac{q-1}{2q^3}$\\\hline
 $I_0^*$ & 4 & 0 & 0 & $\frac{(q-1)(q-2)}{6q^5}$ & 0 & $\frac{(q-1)(q-2)}{6q^4}$ & $\frac{(q-1)(q-2)}{6q^4}$ & $\frac{(q-1)(q-2)}{6q^4}$\\\hline
 $I_{n \geq 1}*$ & 2 & 0 & 0 & $\frac{(q-1)^2}{2q^{5+n}}$ & 0 & $\frac{(q-1)^2}{2q^{4+n}}$ & $\frac{(q-1)^2}{2q^{4+n}}$ & $\frac{(q-1)^2}{2q^{4+n}}$\\\hline
 $I_{n \geq 1}*$ & 4 & 0 & 0 & $\frac{(q-1)^2}{2q^{5+n}}$ & 0 & $\frac{(q-1)^2}{2q^{4+n}}$ & $\frac{(q-1)^2}{2q^{4+n}}$ & $\frac{(q-1)^2}{2q^{4+n}}$\\\hline
 $IV*$ & 1 & 0 & 0 & $\frac{q-1}{2q^6}$ & 0 & $\frac{q-1}{2q^5}$ & $\frac{1}{2q^4}$ & 0\\\hline
 $IV*$ & 3 & 0 & 0 & $\frac{q-1}{2q^6}$ & 0 & $\frac{q-1}{2q^5}$ & $\frac{1}{2q^4}$ & 0\\\hline
 $III*$ & 2 & 0 & 0 & $\frac{q-1}{q^7}$ & 0 & $\frac{q-1}{q^6}$ & 0 & $\frac{q-1}{q^5}$ \\\hline
 $II*$ & 1 & 0 & 0 & $\frac{q-1}{q^8}$ & 0 & $\frac{q-1}{q^7}$ & 0 & $\frac{q-1}{q^6}$ \\\hline

\end{tabular}
\smallskip
\caption{The values of $\psi_{K,\mathfrak{p}}(T, c; \alpha_1, \infty, \alpha_3)$ for $\pp \mid (2).$}
\label{p2psi2}
\end{table}
\end{center}
}
\end{lemma}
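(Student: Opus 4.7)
The plan is to run Tate's algorithm step-by-step on the family $F(\alpha_1, \alpha_3)$, mirroring \Cref{p=5_pipelines} and \Cref{p=3_pipelines}. At each step, I would count the proportion of residues of $a_4$ and $a_6$ modulo a bounded power of $\pi$ that cause the algorithm to terminate with the prescribed Kodaira type and Tamagawa number. The added difficulty compared to $\pp \mid (3)$ is twofold: two coefficients $a_1$ and $a_3$ now influence the flow rather than one $a_2$, and the ramification index $e = v_\pi(2)$ creates branching subcases that are absent above odd primes. This accounts for why \Cref{p2psi1} and \Cref{p2psi2} split into rows for $e = 1, 2$ and $e \geq 3$, with several columns for the $(\alpha_1, \alpha_3)$ pairs.

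For Steps 1 through 5 (types $I_0, I_{n \geq 1}, II, III, IV$), I would compute $\Delta$ and the $b$-invariants modulo $\pi$ as explicit polynomials in $a_1, a_3, a_4, a_6$. When $\alpha_1 = 0$, the curve has multiplicative reduction whenever $\pi \nmid \Delta$; a Hensel count then shows that the proportion of residues of $(a_4, a_6)$ yielding $v_\pi(\Delta) = n$ is $(q-1)/q^{n+1}$, with half giving a split multiplicative fiber, matching the $I_n$ rows. When $\alpha_1 \geq 1$ and $\alpha_3 = 0$, the invariant $b_6 = a_3^2 + 4 a_6$ is a unit in residue characteristic 2, forcing $\pi \nmid \Delta$ and placing every such curve in $I_0$. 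When both $\alpha_1, \alpha_3 \geq 1$, the algorithm always falls through Step 1, and I would shift to the singular point via $(x, y) \mapsto (x + u, y + v)$ with integral $u, v$, then count terminations at Steps 3, 4, 5 by constraining a successively larger $\pi$-adic digit of $a_6$ or $a_4$.

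The main obstacle is Steps 6 through 10 (types $I_0^*, I_{n \geq 1}^*, IV^*, III^*, II^*$), where the cubic $P(T) = T^3 + (b_2/4) T^2 + (b_4/2) T + (b_6/4)$ governs the flow. The subtlety is that in residue characteristic 2 the expressions $b_2/4, b_4/2, b_6/4$ involve division by $2$, whose $\pi$-adic valuation is $e$, so the valuations of these coefficients depend sensitively on how $\alpha_1, \alpha_3$ compare to $e$. I would perform casework on whether $\alpha_1 < e$, $\alpha_1 = e$, or $\alpha_1 > e$ (and similarly for $\alpha_3$), since this determines whether the $a_1, a_3$ contributions to the coefficients of $P(T)$ are units, divisible by $\pi$, or vanish modulo $\pi$. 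In each such subcase I would factor $P(T)$ over the residue field, applying the enumeration of irreducible, linear-times-irreducible-quadratic, and totally split cubics used in \Cref{p=5_pipelines} and \Cref{p=3_pipelines}, adjusted for the traceless or fixed-trace subfamilies that arise whenever $\alpha_1, \alpha_3$ pin down a coefficient of $P(T)$. After shifting the double root to zero, Steps 7 through 10 then proceed by counting residues of $a_6$ modulo bounded powers of $\pi$ in the same Hensel style as in the preceding lemmas, producing the $(q-1)^2/(2 q^{n+5})$ and related densities in the tables.

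Finally, Step 11 corresponds to the non-minimal models: these are exactly the curves that pass every prior step, which fixes the $\pi$-adic digits of $a_4$ and $a_6$ up to modulo $\pi^4$ and $\pi^6$ respectively. The non-minimal density within each family equals $1$ minus the sum of $\psi_{K,\mathfrak{p}}(T, c; \alpha_1, \alpha_3)$ over all Kodaira types, and the image of the non-minimal models under the Step 11 transformation (followed by the re-shift $x \mapsto x - a_2/3$) will feed into the Markov chain analysis carried out in Lemmas \ref{p=2_markov}, \ref{p=2_markov2}, \ref{p=2_markov3}, and \ref{p=2_markov4}.
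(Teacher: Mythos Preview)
Your plan matches the paper's proof in structure and method: both run Tate's algorithm step by step over $F(\alpha_1,\alpha_3)$, counting residues of $(a_4,a_6)$ modulo bounded powers of $\pi$, with the casework on $\alpha_1,\alpha_3$ versus $e$ exactly as you describe.

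One technical correction worth flagging: your Step~6 cubic $P(T) = T^3 + (b_2/4)T^2 + (b_4/2)T + (b_6/4)$ is the odd-characteristic formulation and is not how the paper (or Tate's algorithm in residue characteristic~2) proceeds. Division by $2$ is not available, so instead the paper performs the characteristic-2 substitution $y \mapsto y + tx + v$ with $t^2 \equiv s \pmod{\pi}$ and $\beta^2$ a chosen square root mod~$\pi$, obtaining the model in \eqref{shift2p2}; the cubic $P(T)$ is then read off as $T^3 + \frac{2t^2 - a_1 t}{\pi}T^2 + \cdots$ from the scaled coefficients of that shifted model. The trace coefficient $A_2 \equiv a_1 t / \pi \pmod{\pi}$ is where the dependence on $\alpha_1$ enters, and this is what drives the $\alpha_1 = 1$ versus $\alpha_1 \geq 2$ split in the $I_0^*$ and $I_n^*$ rows. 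Your casework on $\alpha_1$ relative to $e$ is the right instinct, but the mechanism is this square-root shift rather than the $b$-invariants.
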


{
\begin{proof}

We run through Tate's algorithm to compute $\psi_{\mathfrak{p},K}(T, c_{\mathfrak{p}}; \alpha_1, \alpha_3).$ Recall that $q = N_{K/\Q}(\pp)$ is the norm of $\pp$.

\noindent\textbf{Step 1.} $E$ terminates at Step 1 if $\pi \nmid \Delta$. By definition, $\Delta \equiv b_2^2b_8 + b_6^2 + b_2b_4b_6 \pmod{\pi},$ where $b_2 \equiv a_1^2 \pmod{\pi}$, $b_4 \equiv a_1a_3 \pmod{\pi},$ $b_6 \equiv a_3^2 \pmod{\pi},$ and $b_8 \equiv a_1^2a_6 - a_1a_3a_4 - a_4^2 \pmod{\pi}.$ Therefore, $\Delta \equiv a_1^6a_6 - a_1^5a_3a_4 - a_1^4a_4^2 + a_3^4 + a_1^3a_3^3 \pmod{\pi}.$ If $\alpha_1 \geq 1,$ then $\Delta \equiv a_3^4 \pmod{\pi}.$ As such, $\psi_{K,\mathfrak{p}}(I_0, 1; \alpha_1, \alpha_3) = 1$ if $\alpha_3 = 0$ and $0$ if $\alpha_3 \geq 1$. Now, suppose that $\alpha_1 = 0,$ in which case $\Delta$ is then linear in terms of $a_6$. Therefore, for each $a_1$ and $a_3,$ there is one choice of $a_6$ modulo $\pi$ such that $E$ terminates at Step 1. We thus have $\psi_{K,\mathfrak{p}} (I_0, 1; 0 , \geq 0) = \frac{q-1}{q}.$ 

\noindent\textbf{Step 2.} Suppose that the singular point of $E$ is at $(s,u)$ after reduction by $\pi$; accordingly, we shift the singular point to $(0, 0)$ by $(x,y) \mapsto (x+s, y+u)$. Our model is now: 
\begin{equation}
\label{eq:p=2_step2}
    (y+u)^2 + a_1(x+s)(y+u) + a_3(y+u) = (x+s)^3 + a_4(x+s) + a_6.
\end{equation}
To stop at Step 2, we require that $\pi \nmid b_2 = a_1^2 + 12 s$. Therefore, if $\alpha_1 = 0$, then we always stop. By Hensel's lemma, exactly $\frac{q-1}{q^{n+1}}$ of curves have $v_{\pi}(\Delta) = n$. Also, for exactly half of $E,$ $T^2 +  a_1T - 3s$ splits in $k$. Hence, $\psi_{K,\mathfrak{p}}(I_n, n; 0, \geq 0) = \psi_{K,\mathfrak{p}}(I_n, \varepsilon(n); 0, \geq 0) =\frac{q-1}{2q^{n+1}}$. If $\alpha_1 \geq 1$, then $\alpha_3 \geq 1$ by Step 1. In this case, we always pass. Thus, $\psi_{K,\mathfrak{p}}(I_n, c_{\mathfrak{p}}; \alpha_1, \alpha_3) = 0$ for all $\alpha_1,\alpha_3\geq 1$. Henceforth, $\alpha_1, \alpha_3 \geq 1$. By taking partial derivatives of \Cref{eq:p=2_step2}, we find that $s^2 \equiv a_4 \pmod{\pi}$ and $u^2 \equiv a_6 \pmod{\pi}.$

\noindent\textbf{Step 3.} Suppose that $E$ reaches Step 3. We stop if $\pi \nmid 
\pi^{-1}(s^3 + a_4s + a_6 - u^2 - a_1su - a_3u)$. For fixed $a_1$, $a_3$, and $a_4,$ there are $q(q-1)$ choices of $a_6$ modulo $\pi^2$. Hence, $\psi_{K,\mathfrak{p}}(II, 1; \alpha_1, \alpha_3) = \frac{q-1}{q}$ for each $\alpha_1,\alpha_3 \geq 1.$ 

\noindent\textbf{Step 4.} $E$ terminates at this step if $\pi^3 \nmid (3s)(2u + a_1s+a_3)^2-(3s^2 + a_4 - a_1 u)^2.$ By Step 2, we know that $\pi^2 \mid (3s)(2u + a_1s+a_3)^2-(3s^2 + a_4 - a_1 u)^2.$ Thus, we want $\pi \nmid \pi^{-2}(3s (2u+a_1s+a_3)^2 + (3s^2 + a_4 - a_1u)^2)$. Therefore, for fixed $a_1$ and $a_3,$ there are $q(q-1)$ choices for $a_4$ modulo $\pi^2$ and $q$ choices for $a_6$ modulo $\pi^2.$ Thus, $\psi_{K,\mathfrak{p}}(III, 2; \alpha_1,\alpha_3) = \frac{q-1}{q^2}$ for $\alpha_1,\alpha_3 \geq 1.$ 

\noindent\textbf{Step 5.} For an elliptic curve to terminate at Step 5, it must be that $\pi \nmid \pi^{-1}(2u + a_3 + a_1s)$. If $e=1$, for fixed $a_1$ and $a_3,$ we have $q$ choices of $a_4$ modulo $\pi^2$ and $q-1$ choices of $a_6$ modulo $\pi^2$. Now, the Tamagawa number depends on whether the polynomial $Y^2 + \frac{2u+a_3+a_1}{\pi} Y -\frac{a_6 + a_4 s +s^3 - u^2 -a_1su-a_3u}{\pi^2}$ modulo $\pi$ factors over $\mathbb{F}_{q}.$ To count, we will fix $a_1,$ $a_3,$ and $a_4$ and count over the $q(q-1)$ possible $a_6$ modulo $\pi^3$. Suppose that the polynomial has a root in $\mathbb{F}_{q}$ and fix one of the roots. Then, since the trace is fixed, the other root is fixed. Because $\pi^{-1}(2u+a_3+a_1)$ is non-zero modulo $\pi,$ the two roots must be distinct. Therefore, there are $\frac{q(q-1)}{2}$ choices of $a_6$ modulo $\pi^3$ that each results in Tamagawa number 1 and 3. Thus, $\psi_{K,\mathfrak{p}}(IV, 1; \geq 1, \geq 1) = \psi_{K,\mathfrak{p}}(IV, 3; \geq 1, \geq 1) = \frac{q-1}{2q^3}$ when $e=1.$ Now, suppose that $e \geq 2.$ First, if $\alpha_1 =1,$ then we have $q-1$ choices of $a_4$ modulo $\pi^2$ and $q$ choices for $a_6$ modulo $\pi^2.$ With the same argument as above, we conclude that for $\frac{q^2}{2}$ choices of $a_6$ modulo $\pi^3,$ the Tamagawa number is 1, and for the same number of choices, the Tamagawa number is 3. Now, suppose that $\alpha_1 \geq 2.$ If $\alpha_3 = 1,$ then the elliptic curves always terminate at this step. Again, we have that for half of the choices of $a_6,$ the Tamagawa number is 1 and that for the other half, the Tamagawa number is 3. Therefore, we have that $\psi_{K,\mathfrak{p}}(IV, 1; \geq 2, 1) = \psi_{K,\mathfrak{p}}(IV, 3; \geq 2, 1) = \frac{1}{2q^2}.$  Conversely, if $\alpha_3 \geq 2,$ then no curves terminate at this step. Therefore, $\psi_{K,\mathfrak{p}}(IV, 1; \geq 2, \geq 2) = \psi_{K,\mathfrak{p}}(IV, 3; \geq 2, \geq 2) = 0.$ 

\noindent\textbf{Step 6.} Let $t^2 \equiv s \pmod{\pi}$ and $\beta^2 \equiv \pi^{-2}(s^3+a_4s+a_6-u^2-a_1su-a_3u) \pmod{\pi}$, and define $v := u + \beta \pi$. Following the shifts outlined in Step 6 of Tate's algorithm, we have \begin{equation}
    (y + tx + v)^2 + a_1(x+t^2)(y+tx+v) + a_3(y+tx+v) = (x+t^2)^3 + a_4(x+t^2) + a_6.\label{shift2p2}
\end{equation} 
We study $P(T) = T^3 + \frac{2t^2 - a_1t}{\pi}T^2 + \frac{3t^4 + a_4 - 2tv - a_1t^3 - a_1v - a_3t}{\pi^2}T + \frac{t^6 + a_4t^2 + a_6 - v^2 - a_1t^2v - a_3v}{\pi^3}.$ Define $A_2, A_4,$ and $A_6$ such that $P(Y) \equiv T^3 + A_2 T^2 + A_4 T + A_6 \pmod{\pi}$. Now, suppose that we fix $a_4$ modulo $\pi^2$ and $a_6$ modulo $\pi^3.$ There then exists a bijective map between the $\pi$ possible values of $a_4$ modulo $\pi^3$ and $A_4$ modulo $\pi$ and between the $\pi$ possible values of $a_6$ modulo $\pi^4$ and $A_6$ modulo $\pi.$ For $E$ to terminate at Step 6, $P(T)$ must have three distinct roots. If so, $P(T)$ and $P'(T) \equiv 3 T^2 + A_4 \equiv 0 \pmod{\pi}$ should not have shared roots. Therefore, for $P(T)$ to have three distinct roots, $A_2 A_4 \not\equiv A_6 \pmod{\pi}.$ Thus, for each $A_2$ modulo $\pi,$ there are $q(q-1)$ choices of $(A_4, A_6)$ modulo $\pi.$ We also note that when $P(T)$ has three distinct roots, none of the roots can be $A_2$ as if so, the remaining two roots must be the same---a contradiction to $P(T)$ having distinct roots. 

We now fix $A_2$ and count the number of $P(T)$ with three distinct roots that have three, one, and no roots in $\mathbb{F}_q$ over $(A_4, A_6)$ modulo $\pi.$ Because we fix $A_2$ modulo $\pi,$ the trace of $P(T)$ is fixed. We first count the number of $P(T)$ that have all three roots in $\mathbb{F}_q$ with fixed trace. We have $q-1$ choices for the first root, $q-2$ choices for the second root, and a fixed choice for the third root, because as long as none of the roots are congruent to $A_2$ modulo $\pi,$ the three roots are distinct. Therefore, for fixed $A_2,$ there are $\frac{(q-1)(q-2)}{6}$ choices of $(A_4, A_6)$ modulo $\pi$ that allows for $P(T)$ to have three distinct roots, all of which are in $\mathbb{F}_q.$ We now proceed to count the number of $P(T)$ with three distinct roots with exactly one root in $\mathbb{F}_q$ with fixed $a_2.$ We start by choosing one of $\frac{q^2-q}{2}$ irreducible quadratics. The root in $\mathbb{F}_q$ is then fixed as the trace of $P(T)$ is fixed. Therefore, there are a total of $\frac{q^2-q}{2}$ choices of $P(T)$ with three distinct roots, exactly one root of which is in $\mathbb{F}_q.$ Lastly, we count the number of irreducible cubics with three distinct roots. Out of the $q^3$ elements in $\mathbb{F}_{q^3},$ $q$ are in $\mathbb{F}_q.$ Because the traces are equally distributed, for a fixed trace, there are $\frac{q^3-q}{q} = q^2-1$ elements with that fixed trace. Since $P(T)$ is a cubic, there are $\frac{q^2-1}{3}$ irreducible cubics with trace $A_2$. 

Now, suppose that $e = 1.$ From Steps 1 and 2, we have that $\alpha_1, \alpha_3 \geq 1.$ Now, for each fixed $(a_4, \frac{a_1}{\pi})$ modulo $\pi,$ we have $\frac{(q-1)(q-2)}{6}$ $P(T)$ with three distinct roots all in $\mathbb{F}_q$, $\frac{q(q-1)}{2}$ $P(T)$ with exactly one of the three distinct roots in $\mathbb{F}_q,$ and $\frac{q^2-1}{2}$ $P(T)$ with three distinct roots, none of which are in $\mathbb{F}_q.$ Thus, we have that for $\psi_{K,\mathfrak{p}}(I_0^*, 4; \geq 1, \geq 1) = \frac{(q-1)(q-2)}{6q^5},$ $\psi_{K,\mathfrak{p}}(I_0^*, 2; \geq 1, \geq 1) = \frac{(q-1)}{2q^4},$ and $\psi_{K,\mathfrak{p}}(I_0^*, 1; \geq 1, \geq 1) = \frac{q^2-1}{3q^5}.$ Suppose that $e \geq 2.$ If $\alpha_1 = 1,$ then $\alpha_3 \geq 1$ from Step 1. Then, $A_2$ modulo $\pi$ forms a bijective map with $\frac{a_1}{\pi}$ modulo $\pi.$ Therefore, by our aforementioned counting of $P(T)$ with three distinct roots, a fixed number of which are in $\mathbb{F}_q,$ we have that for $\psi_{K,\mathfrak{p}}(I_0^*, 4; 1, \geq 1) = \frac{(q-1)(q-2)}{6q^5},$ $\psi_{K,\mathfrak{p}}(I_0^*, 2; 1, \geq 1) = \frac{(q-1)}{2q^4},$ and $\psi_{K,\mathfrak{p}}(I_0^*, 1; 1, \geq 1) = \frac{q^2-1}{3q^5}.$ If $\alpha_1 \geq 2,$ then $\alpha_3 \geq 2$ from Step 6. Then, $A_2 \equiv 0 \pmod{\pi}.$ Therefore, by our aforementioned counting of $P(T)$ with a fixed number of roots in $\mathbb{F}_q,$ we have that for $\psi_{K,\mathfrak{p}}(I_0^*, 4; \alpha_1,\alpha_3) = \frac{(q-1)(q-2)}{6q^4},$ $\psi_{K,\mathfrak{p}}(I_0^*, 2; \alpha_1,\alpha_3) = \frac{(q-1)}{2q^3},$ and $\psi_{K,\mathfrak{p}}(I_0^*, 1; \alpha_1, \alpha_3) = \frac{q^2-1}{3q^4}$ for $\alpha_1,\alpha_3 \geq 2.$

\noindent\textbf{Step 7.} $E$ terminates at Step 7 if $A_2A_4 \equiv A_6 \pmod{\pi}$ and $(A_4, A_6) \not\equiv (A_2^2, A_2^3) \pmod{\pi}$. We study the interaction between quadratics $R(Y) = Y^2 + a'_{3,*}Y - a'_{6,*}$ and $S(X) = a'_{2,*} X^2 + a'_{4,*} X + a'_{6,*}$, translating the curve as we move between them. Note that by varying $a_6$, the quantity $a'_{6,*}$ is surjective modulo $\pi$. As before, since $a_2, a_3, a_4, a_6$ are equidistributed, by Hensel's lemma, there are $\frac{q-1}{q^n}$ residues for which we have Kodaira type $I_n$, and moreover, half of these cause the quadratic in question to split. Hence, $ \psi_{K,\mathfrak{p}}(I_n^*, 1; \geq 1, \geq 1) =  \psi_{K,\mathfrak{p}}(I_n^*, 3; \geq 1, \geq 1) = \frac{q-1}{2q^{5+n}}$ for $e=1$ and $ \psi_{K,\mathfrak{p}}(I_n^*, 1; \geq 2, \geq 2) = \psi_{K,\mathfrak{p}}(I_n^*, 3; \geq 2, \geq 2) = \frac{q-1}{2q^{4+n}}$ for $e \geq 2$.

\noindent \textbf{Step 8.} Suppose that $E$ reaches Step 8. Then $(A_2, A_4, A_6) \equiv (A_2, A_2^2, A_2^3) \pmod{\pi}$. Perform $x\mapsto x + \pi A_2 = x + (2t^2 - a_1t)$ and let $v' := v + 2t^3 - a_1t^2$ to get the penultimate model
\begin{equation}
    (y+tx+v')^2 + a_1(x+3t^2-a_1t)(y+tx+v') + a_3(y+tx+v') = (x+3t^2-a_1t)^3 + a_4(x+3t^2-a_1t) + a_6.
\end{equation}
We stop if $\pi \nmid \pi^{-2}(2v'+3a_1t^2 - a_1^2 t + a_3)$. We notice that if we fix $a_1,$ $a_4,$ and $a_6,$ then $a_3$ modulo $\pi^2$ is fixed such that $2v'+3a_1t^2 - a_1^2 t + a_3$ is a multiple of $\pi^2.$ We then notice that $\pi^{-2}(2v'+3a_1t^2 - a_1^2 t + a_3)$ modulo $\pi$ forms a bijective map with the $q$ possible values of $a_3$ modulo $\pi^3.$  

First, suppose that $e= 1.$ From Steps 1 and 2, we have that $\alpha_1, \alpha_3 \geq 1.$ For each $a_3$ modulo $\pi^2,$ we see that for $\frac{q}{2}$ possible values of $a_3$ modulo $\pi^3,$ $E$ terminates with Tamagawa number $1$ and that for $\frac{q}{2}$ choices for $a_3$ moudlo $\pi^3,$ $E$ terminates with Tamagawa number 3. Therefore, when $e \geq 1,$ $\psi_{K,\mathfrak{p}}(IV^*, 1; \geq 1, \geq 1) = \psi_{K,\mathfrak{p}}(IV^*, 3; \geq 1, \geq 1) = \frac{q-1}{2q^6}.$ For the same reasons, we conclude that when $e=2$ and $\alpha_1 = 1$ and $\alpha_3 \geq 1,$  $\psi_{K,\mathfrak{p}}(IV^*, 1; 1, \geq 1) = \psi_{K,\mathfrak{p}}(IV^*, 3; 1, \geq 1) = \frac{q-1}{2q^6}.$ We also see in the same way that $\psi_{K,\mathfrak{p}}(IV^*, 1; \geq 2, \geq 2) = \psi_{K,\mathfrak{p}}(IV^*, 3; \geq 2, \geq 2) = \frac{q-1}{2q^5}.$

Now, suppose that $e \geq 3.$ When $\alpha_1=1$ and $\alpha_3 \geq 1,$ we conclude as we did in the previous paragraph that $\psi_{K,\mathfrak{p}}(IV^*, 1; 1, \geq 1) = \psi_{K,\mathfrak{p}}(IV^*, 3; 1, \geq 1) = \frac{q-1}{2q^6}.$ We also see in the same way that $\psi_{K,\mathfrak{p}}(IV^*, 1; \geq 2, \geq 2) = \psi_{K,\mathfrak{p}}(IV^*, 3; \geq 2, \geq 2) = \frac{q-1}{2q^6}.$ Similarly, when $\alpha_1=2$ and $\alpha_3 \geq 2,$ we have that $\psi_{K,\mathfrak{p}}(IV^*, 1; 2, \geq 2) = \psi_{K,\mathfrak{p}}(IV^*, 3; 2, \geq 2) = \frac{q-1}{2q^5}.$ But when $\alpha_1 \geq 3$ and $\alpha_3 = 2,$ then $E$ necessarily terminates at Step 8. Then depending on $\frac{a_3}{\pi^2},$ $E$ has Tamagawa number 1 and 3 with equal proportions. Therefore, we have that $\psi_{K,\mathfrak{p}}(IV^*, 1; \geq 3, 2) = \psi_{K,\mathfrak{p}}(IV^*, 3; \geq 3, 2) = \frac{1}{2q^4}.$ If $\alpha_1, \alpha_3 \geq 3,$ however, no $E$ terminates at this step. Therefore, $\psi_{K,\mathfrak{p}}(IV^*, 1; \geq 3, 2) = \psi_{K,\mathfrak{p}}(IV^*, 3; \geq 3, 2) = 0.$

\noindent \textbf{Step 9.} Let $w^2 \equiv \pi^{-4}((3t^2-a_1t)^3 + a_4(3t^2 - a_1t) + a_6 - v'^2 + a_1^2 t v' -a_3v') \pmod{\pi}$ and let $w  := \pi^2 w' + v'.$ Then, we have the final model 
\begin{equation}
\label{p=2_final_eq}
    (y+tx+w)^2 + a_1(x+3t^2-a_1t)(y+tx+w) + a_3(y+tx+w) = (x+3t^2-a_1t)^3 + a_4(x+3t^2-a_1t) + a_6.
\end{equation}
We terminate at this case if $\pi^4 \nmid -2tw -a_1w - a_1^2t^2 - a_3t + 3(3t^2-a_1t)^2 + a_4.$ From Step 8, we have that $\pi^3 \mid -2tw -a_1w - a_1^2t^2 - a_3t + 3(3t^2-a_1t)^2 + a_4.$ Therefore, we want $\pi \mid \pi^{-3}(-2tw -a_1w - a_1^2t^2 - a_3t + 3(3t^2-a_1t)^2 + a_4)$. For fixed $a_4$ modulo $\pi^3,$ $\pi^{-3}(-2tw -a_1w - a_1^2t^2 - a_3t + 3(3t^2-a_1t)^2 + a_4)$ forms a bijective map with the $q$ possible values of $a_4$ modulo $\pi^4.$ When $e=1,$ $\alpha_1, \alpha_3 \geq 1$ from Steps 1 and 2. Therefore, $\psi_{K,\mathfrak{p}}(III^*, 2; \geq 1, \geq 1) = \frac{q-1}{q^7}.$ When $e=2,$ we have from Steps 1, 2, and 5 that either $\alpha_1 = 1$ and $\alpha_3 \geq 1$ or $\alpha_1, \alpha_3 \geq 2.$ We thus conclude, $\psi_{K,\mathfrak{p}}(III^*, 2; 1, \geq 1) =\frac{q-1}{q^7}$ and $ \psi_{K,\mathfrak{p}}(III^*, 2; \geq 2, \geq 2) = \frac{q-1}{q^6}.$ Lastly, when $e \geq 3,$ we have that either $\alpha_1 = 1$ and $\alpha_3 \geq 1,$ $\alpha_1 =2$ and $\alpha_3 \geq 2,$ and $\alpha_1 \geq 3$ and $\alpha_3 \geq 3.$ We similarly conclude that $\psi_{K,\mathfrak{p}}(III^*, 2; 1, \geq 1) =\frac{q-1}{q^7},$ $\psi_{K,\mathfrak{p}}(III^*, 2; 2, \geq 2) =\frac{q-1}{q^6},$ and $ \psi_{K,\mathfrak{p}}(III^*, 2; \geq 3, \geq 3) = \frac{q-1}{q^5}.$ 

\noindent \textbf{Step 10.} $E$ terminates at Step 10 if $\pi^6 \nmid -w^2+a_1^2tw - a_3w + (3t^2-a_1t)^3 -a_1a_4t + a_6.$ From Step 9, we have that $\pi^5 \mid -w^2+a_1^2tw - a_3w + (3t^2-a_1t)^3 -a_1a_4t + a_6.$ Therefore, we want that $\pi \mid \pi^{-5}(-w^2+a_1^2tw - a_3w + (3t^2-a_1t)^3 -a_1a_4t + a_6)$. Fix $a_6$ modulo $\pi^5.$ Then, note that $\pi^{-5}(-w^2+a_1^2tw - a_3w + (3t^2-a_1t)^3 -a_1a_4t + a_6)$ modulo $\pi$ forms a bijective map with the $q$ possible values of $a_6$ modulo $\pi^6.$ For $q-1$ of the $q$ possible values of $a_6$ modulo $\pi^6,$ $E$ terminates at Step 10. When $e=1,$ $\alpha_1, \alpha_3 \geq 1$ from Steps 1 and 2. Therefore, $\psi_{K,\mathfrak{p}}(II^*, 1; \geq 1, \geq 1) = \frac{q-1}{q^8}.$ When $e=2,$ we have from Steps 1, 2, and 5 that either $\alpha_1 = 1$ and $\alpha_3 \geq 1$ or $\alpha_1, \alpha_3 \geq 2.$ We thus conclude, $\psi_{K,\mathfrak{p}}(II^*, 1; 1, \geq 1) =\frac{q-1}{q^8}$ and $ \psi_{K,\mathfrak{p}}(II^*, 2; \geq 2, \geq 2) = \frac{q-1}{q^7}.$ Lastly, when $e \geq 3,$ we have that either $\alpha_1 = 1$ and $\alpha_3 \geq 1,$ $\alpha_1 =2$ and $\alpha_3 \geq 2,$ and $\alpha_1 \geq 3$ and $\alpha_3 \geq 3.$ We similarly conclude that $\psi_{K,\mathfrak{p}}(II^*, 1; 1, \geq 1) =\frac{q-1}{q^8},$ $\psi_{K,\mathfrak{p}}(II^*, 1; 2, \geq 2) =\frac{q-1}{q^7},$ and $ \psi_{K,\mathfrak{p}}(II^*, 1; \geq 3, \geq 3) = \frac{q-1}{q^6}.$  

\noindent \textbf{Step 11.} For $E$ to reach Step 11, it must not have terminated at a previous step. Therefore, we check that when $e=1,$ $\alpha_1, \alpha_3 \geq 1,$ the proportion of non-minimal curves is $\frac{1}{q^8},$ when $e=2,$ $\alpha_1 =1,$ and $\alpha_3 \geq 1,$ the proportion of non-minimal curves is $\frac{1}{q^8}$ as well, and that when $e=2$ and $\alpha_1,\alpha_3 \geq 2,$ the proportion of non-minimal curves is $\frac{1}{q^7}.$ When $e \geq 3,$ the proportion of non-minimal curves equal $\frac{1}{q^8}$ when $\alpha_1=1$ and $\alpha_3 \geq 1,$ $\frac{1}{q^7}$ when $\alpha_1=2$ and $\alpha_3 \geq 2,$ and $\frac{1}{q^6}$ when $\alpha_1, \alpha_3 \geq 3.$ \end{proof}}

We now show how we reclassify the non-minimal models of one family as another family of curves. As in \Cref{TateP3}, we  first note that the models in $F(\geq e, \geq e)$ and $F(\infty, \infty)$ have the same local properties in the following sense. 

\begin{remark}
    Because linear transformations do not change the local data of an elliptic curve, without loss of generality, instead of computing the local density on short Weierstrass forms $F(\infty, \infty)$, we can compute the local density at the set 
\begin{equation}
    \left\{ \left(E : \left(y +  tx + s\right)^2 =  \left(x+\frac{t^2}{3}\right)^3 + a_4\left(x+\frac{t^2}{3}\right) + a_6 \right) : s, t, a_4, a_6 \ \text{integral} \right\}. \label{eq:reshiftp2}
\end{equation}
By the surjectivity of $a_4$ and $a_6$, the set in \cref{eq:reshiftp2} is precisely $F(\geq e, \geq e)$.  
\end{remark}

 Thus, moving forward, if $\alpha_1, \alpha_3 \geq e$, then we work with the curves in $F(\geq e, \geq e)$ instead of $F(\alpha_1, \alpha_3)$. Similarly, if $\alpha_1 \geq e > \alpha_3$, then we work with the curves in $F(\geq e, \alpha_3)$ and if $\alpha_3 \geq e > \alpha_1$, then we work with the curves in $F(\alpha_1, \geq e)$. We first show that, for $\alpha_3, \alpha_1 < e$, the local densities at the non-minimal models of a family $F(\alpha_1,\alpha_3)$ exactly match the local densities at the family $F(\alpha_1-1,\alpha_3-3)$. To do this, we establish a map which sends a non-minimal model in $F(\alpha_1,\alpha_3)$ to another isomorphic model in $F(\alpha_1-1,\alpha_3-3)$, induced by the transformation at Step $11$ followed by the shift $x \mapsto x - a_2/3$. Likewise, we show that for $\alpha_1 \geq e > \alpha_3$ (resp. $\alpha_3 \geq e > \alpha_1$ and $\alpha_1, \alpha_3 \geq e$), the local densities at the non-minimal models of a family $F(\geq e,\alpha_3)$ (resp. $F(\alpha_1, \geq e)$ and $F(\geq e, \geq e)$) exactly match the local densities at the family $F(\geq e-1,\alpha_3-3)$ (resp. $F(\alpha_1-1, \geq e-3)$ and $F(\geq e-1, \geq e-3)$).

\begin{lemma} \label{p=2_markov}
We have surjective, $q^4$-to-$1$ maps 
\begin{itemize}
    \item between the set of non-minimal models in $F(\alpha_1,\alpha_3)$ and the set $F(\alpha_1-1,\alpha_3-3)$ for each $\alpha_1, \alpha_3 < e$,
    \item between the set of non-minimal models in $F(\geq e, \alpha_3)$ and the set $F(\geq e-1,\alpha_3-3)$ for each $\alpha_1 \geq e > \alpha_3$
    \item between the set of non-minimal models in $F(\alpha_1,\geq e)$ and the set $F(\alpha_1-1,\geq e-3)$ for each $\alpha_3 \geq e > \alpha_1$
    \item between the non-minimal models in $F(\geq e, \geq e)$ and the set $F(\geq e-1,\geq e-3)$
\end{itemize}
that each sends $E$ to its transformation $E'$ after passing Step 11.
\end{lemma}

\begin{proof}
First, suppose that $\alpha_1, \alpha_3 < e$. Recall from Step 11 of Tate's algorithm that for non-minimal $E(a_1, a_2, a_3, a_4, a_6)$, Tate's algorithm produces a unique residue $t \pmod{\pi}$ and $w \pmod{\pi^3}$ for which
\begin{align*}
    \widehat{E}(\widehat{a_1}, \widehat{a_2}, \widehat{a_3}, \widehat{a_4}, \widehat{a_6}) := \  & (y+tx+w)^2 + a_1(x+3t^2-a_1t)(y+tx+w) + a_3(y+tx+w) \\ & = (x+3t^2-a_1t)^3 + a_4(x+3t^2-a_1t) + a_6
\end{align*}
has the coefficient of $y$ and $xy$ divisible by $\pi$ and $\pi^3$, respectively, and the coefficient of $x^i$ divisible by $\pi^i$ for $i=2,4,6$. Hence, each non-minimal model $E(a_1, a_2, a_3, a_4, a_6) \in F(\alpha_1, \alpha_3)$ is sent to
\begin{align*}
    & \widehat{E} \left(\frac{a_1+2t}{\pi}, \frac{8t^2-4a_1t}{\pi^2}, \frac{2w+3a_1t^2-a_1^2t+a_3}{\pi^3},\right. \\ &\qquad\qquad\frac{-2tw-a_1w-3a_1t^3+a_1^2t^2-a_3t+3(3t^2-a_1t^2)+a_4}{\pi^4}, \\ &\qquad\qquad\left.\frac{-w^2-a_1w(3t^2-a_1t)-a_3w+(3t^2-a_1t)^3 + a_4(3t^2-a_1t)+a_6}{\pi^6}\right),
\end{align*}
with $v_{\pi}(2t + a_1) = v_{\pi}(a_1) = \alpha_1$ and $v_{\pi}(2w+3a_1t^2 - a_1^2t +a_3) = v_{\pi}(a_3) = \alpha_3$. Now, we perform $x \to x -\frac{\widehat{a_2}}{3}$ to $\widehat{E}$ and transform $\widehat{E}$ to $E':$
\begin{equation*}
    E'\left(a_1', a_3', a_4', a_6'\right) : y^2 + \widehat{a_1} \left(x-\frac{\widehat{a_2}}{3}\right) y + \widehat{a_3} y = \left(x-\frac{\widehat{a_2}}{3}\right)^3 + \widehat{a_2}\left(x-\frac{\widehat{a_2}}{3}\right)^2 + \widehat{a_4}\left(x-\frac{\widehat{a_2}}{3}\right) + \widehat{a_6}
\end{equation*}
We now have 
\begin{align*}
    E'\left(\widehat{a_1},-\frac{\widehat{a_1}\widehat{a_2}}{3}+\widehat{a_3}, \frac{-\widehat{a_2}^2}{3}+\widehat{a_4}, \frac{2\widehat{a_2}^3}{27}-\frac{\widehat{a_2}\widehat{a_4}}{3}+\widehat{a_6}\right) \in F(\alpha_1-1, \alpha_3-3).
\end{align*}
The two transformations are well-defined, so the map is also well-defined.

Conversely, given a model $E'(a_1', a_3', a_4', a_6') \in F(\alpha_1-1,\alpha_3-3)$, pick a pair of residues $t \pmod{\pi}$ and $w \pmod {\pi^3}$. Then, there is a unique choice of $\widehat{a_2}$ for which
\[\widehat{E}(\widehat{a_1}, \widehat{a_2}, \widehat{a_3}, \widehat{a_4}, \widehat{a_6}) : y^2 + a_1' \left(x+\frac{\widehat{a_2}}{3}\right) y + a_3' y = \left(x+\frac{\widehat{a_2}}{3}\right)^3 + a_4'\left(x-\frac{\widehat{a_2}}{3}\right) + a_6'\]
and $(\widehat{a_1}, \widehat{a_2}) = \left( \frac{a_1+2t}{\pi}, \frac{8t^2 - 4a_1t}{\pi^2} \right)$ for some $a_1$; this ensures that $\widehat{E}$ has some preimage $E \in F(\alpha_1,\alpha_3)$. In fact, there is a unique preimage $E$ for which $E \mapsto \widehat{E}$ after Step 11. Hence, after varying $t$ and $w$ across a set of $q$ and $q^3$ representatives, respectively, we have shown the aforementioned map is $q^4$-to-$1$, as we had sought. The proof of the statement of the lemma when $\alpha_1 \geq e$ or $\alpha_3 \geq e$ follows in the same way and is thus omitted. 
\end{proof}

We now finish by using our lemmas to compute $\delta_{K,\pp}(c)$ by forming a Markov chain amongst families of curves. Lemmas~\ref{p=2_markov} establish the edges between these families, while Lemma~\ref{p=2_pipelines} establishes the local densities at each node. This information is collated in Figures~\ref{markovp2e1}, \ref{markovp2e2}, and \ref{markovp2e3}.

{
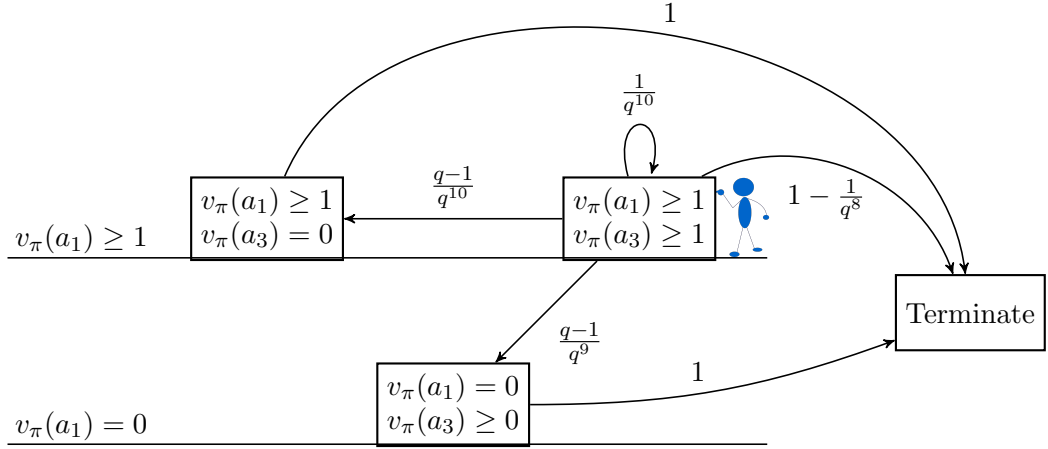
\begin{figure}[ht!]
\centering
\hspace*{1cm}
\begin{tikzpicture}[->, >=stealth', auto, semithick, node distance=3.5cm,  no arrow/.style={-,every loop/.append style={-}}]
\tikzstyle{every state}=[draw, rectangle, minimum height = 1cm, minimum width = 1cm, fill=white,draw=black,thick,text=black,scale=1,align=center]

\node[state]    (A) {Terminate};
\node[state]    (D)[below left of=A, xshift=-4.4cm, yshift=1.25cm]   {$v_\pi(a_1) = 0$\\ $v_\pi(a_3) \ge 0$};
\node[state]    (B)[above left of=D]   {$v_\pi(a_1) \ge 1$\\ $v_\pi(a_3) = 0 $};
\node[state]    (C)[above right of=D]   {$v_\pi(a_1) \ge 1$\\ $v_\pi(a_3) \ge 1$};

\path
(C) edge[loop above]    
    node{$\frac{1}{q^{10}}$}     (C)
    edge[swap]
    node{$\frac{q-1}{q^{10}}$}      (B)
    edge
    node{$\frac{q-1}{q^{9}}$}      (D)
    edge[bend left=50, swap]
    node[xshift=0.35cm]{$1 - \frac{1}{q^{8}}$}         (A)
(B) edge[bend left=75]
    node{$1$}         (A)
(D) edge[bend right=10]
    node{$1$}         (A);
\draw[no arrow] (-12.8,0.73) -- (-2.7,0.73);
\draw[no arrow] (-12.8,-1.75) -- (-2.7,-1.75);
\node[text width=2cm] at (-11.7,-1.5) {$v_\pi(a_1) = 0$};
\node[text width=2cm] at (-11.7,0.98) {$v_\pi(a_1) \ge 1$};
\end{tikzpicture}
\caption{The Markov Chain structure when $\pp \mid (2)$ for $e=1.$}
\label{markovp2e1}
\end{figure}

\begin{figure}[ht!]
\centering
\begin{minipage}{.5\textwidth}
\hspace*{-3cm}
\begin{tikzpicture}[->, >=stealth', auto, semithick, node distance=3.5cm,  no arrow/.style={-,every loop/.append style={-}}]
\tikzstyle{every state}=[draw, rectangle, minimum height = 1cm, minimum width = 1cm, fill=white,draw=black,thick,text=black,scale=1,align=center]

\node[auto=false]    (A) {};
\node[state]    (C)[left of=A, xshift=-3cm, yshift=-1cm]   {$v_\pi(a_1) = 1$\\ $v_\pi(a_3) \ge 1$};
\node[state]    (D)[below left of=C]   {$v_\pi(a_1) = 0$\\ $v_\pi(a_3) \ge 0$};
\node[state]    (B)[above left of=D]   {$v_\pi(a_1) = 1$\\ $v_\pi(a_3) = 0 $};
\node[state]    (E)[above left of=B]   {$v_\pi(a_1) \ge 2$\\ $v_\pi(a_3) = 0$};
\node[state]    (F)[above right of=B]   {$v_\pi(a_1) \ge 2$\\ $v_\pi(a_3) = 1$};
\node[state]    (G)[above right of=C]   {$v_\pi(a_1) \ge 2$\\ $v_\pi(a_3) \ge 2$};

\path
(G) edge[loop above]    
    node{$\frac{1}{q^{10}}$}     (G)
    edge[swap]
    node{$\frac{q-1}{q^{10}}$}      (F)
    edge[bend right=20, swap]
    node{$\frac{q-1}{q^{9}}$}      (E)
    edge
    node{$\frac{q-1}{q^{9}}$}      (C)
    edge[bend left=8, swap]
    node[xshift=0.25cm, yshift=-0.13cm]{$\frac{(q-1)^2}{q^{9}}$}      (B)
(C) edge node{$\frac{1}{q^8}$} (D);

\draw[no arrow] (-17.2,0.95) -- (-2.6,0.95);
\draw[no arrow] (-17.2,-1.5) -- (-2.6,-1.5);
\draw[no arrow] (-17.2,-4) -- (-2.6,-4);

\node[text width=2cm] at (-16,1.25) {$v_\pi(a_1) \ge 2$};
\node[text width=2cm] at (-16,-1.2) {$v_\pi(a_1) = 1$};
\node[text width=2cm] at (-16,-3.7) {$v_\pi(a_1) = 0$};

\end{tikzpicture}
\end{minipage}
\caption{The Markov Chain structure when $\pp \mid (2)$ for $e=2.$}
\label{markovp2e2}
\end{figure}}

\begin{figure}[ht!]
{\tiny 
\setlength{\tabcolsep}{6pt} 
\renewcommand{\arraystretch}{2.}
\begin{center}
\begin{tabular}{|c||c|c|c|c|c|c|c|}
 \hline
 Step & $v(a_1)=0$ & \makecell{$v(a_1) \geq 1$\\$v(a_3) = 0$} & \makecell{$v(a_1) = 1$\\$v(a_3) \geq 1$} & \makecell{$v(a_1) \geq 2$ \\ $v(a_3) = 1$} & \makecell{$v(a_1) = 2$ \\ $v(a_3) \geq 2$} & \makecell{$v(a_1) \geq 3$ \\ $v(a_3) = 2$} & \makecell{$v(a_1) \geq 3$ \\ $v(a_3) \geq 3$} \\ \hline \hline
 Color & \cellcolor{black} & \cellcolor{rgb:red,1;green,2;blue,5} & \cellcolor{ProcessBlue} & \cellcolor{magenta} & \cellcolor{LimeGreen} & \cellcolor{Orchid} & \cellcolor{Goldenrod} \\\hline
\end{tabular}
\smallskip
\end{center}
}
\vspace*{1cm}
\begin{center}
\hspace*{-2.5cm}
\scalebox{0.7}{\begin{tikzpicture}[->, >=stealth', auto, semithick, node distance=3.5cm,  no arrow/.style={-,every loop/.append style={-}}]
\tikzstyle{every state}=[draw, rectangle, minimum height = 1cm, minimum width = 1cm, fill=white,draw=black,thick,text=black,scale=1,align=center]

\node[state, fill=Goldenrod]    (I7) {};
\node[state, fill=Goldenrod]    (I8)[below of=I7] {};
\node[state, fill=Goldenrod]    (H7)[xshift=2.75cm, right of=I8] {};
\node[state, fill=Goldenrod]    (H6)[xshift=2.75cm, right of=I7] {};
\node[state, fill=ProcessBlue]    (B2)[xshift=0.75cm, right of=H7] {};
\node[state, fill=LimeGreen]    (C3)[xshift=0.75cm, right of=H6] {};
\node[state, fill=black]    (A)[yshift=-1cm, above of=I7] {};
\node[state, fill={rgb:red,1;green,2;blue,5}]    (B)[right of=A] {};
\node[state, fill=magenta]    (C)[right of=B] {};
\node[state, fill=Orchid]    (D)[right of=C] {};

\path
(B) edge[color=Gray]
    node{1} ([xshift=15mm, yshift=-8mm]B);

\path
(A) edge[color=Gray]
    node{1} ([xshift=15mm, yshift=-8mm]A);
    
\path
(C) edge[color=Gray]
    node{1} ([xshift=15mm, yshift=-8mm]C);
    
\path
(D) edge[color=Gray]
    node{1} ([xshift=15mm, yshift=-8mm]D);
    
\path
(B2)    edge[bend left=40, color=Sepia]
        node{\tiny$\frac{1}{q^8}$}  ([xshift=-18.25mm, yshift=-21.25mm]B2)
        edge[color=Gray]
        node{\tiny$1 - \frac{1}{q^8}$} ([xshift=15mm, yshift=-8mm]B2);

\path
(C3)    edge[bend left=40, color=MidnightBlue]
        node{\tiny$\frac{1}{q^8}$}  ([xshift=-18.25mm, yshift=-21.25mm]C3)
        edge[bend left=0, color=MidnightBlue]
        node[yshift=2mm,xshift=2mm]{\tiny$\frac{(q-1)}{q^8}$}  ([xshift=-36.5mm, yshift=-21.25mm]C3)
        edge[color=Gray]
        node{\tiny$1 - \frac{1}{q^7}$} ([xshift=15mm, yshift=-8mm]C3);
        
\path
(H7)    edge[bend left=40, color=RubineRed]
        node{\tiny$\frac{1}{q^8}$}  ([xshift=-18.25mm, yshift=-21.25mm]H7)
        edge[bend left=0, color=RubineRed]
        node[yshift=2mm,xshift=2mm]{\tiny$\frac{(q-1)}{q^8}$}  ([xshift=-36.5mm, yshift=-21.25mm]H7)
        edge[color=RubineRed, swap]
        node[yshift=-2mm]{\tiny$\frac{(q-1)}{q^7}$}  ([xshift=-54.75mm, yshift=-21.25mm]H7)
        edge[color=Gray]
        node{\tiny$1 - \frac{1}{q^6}$} ([xshift=15mm, yshift=-8mm]H7);
        
\path
(H6)    edge[bend left=0, color=RoyalPurple, swap]
        node[yshift=-1mm]{\tiny$\frac{1}{q^6}$}  ([xshift=-54.75mm, yshift=-21.25mm] H6)
        edge[color=Gray]
        node{\tiny$1 - \frac{1}{q^6}$} ([xshift=15mm, yshift=-8mm]H6);
        
\path
(I7)    edge[bend right, color=ForestGreen, swap]
        node{\tiny$\frac{1}{q^7}$}  ([xshift=-54.75mm, yshift=4.5mm] I7)
        edge[bend left=0, color=ForestGreen, swap]
        node[yshift=-2mm]{\tiny$\frac{(q-1)}{q^7}$}  ([xshift=-54.75mm, yshift=-21.25mm] I7)
        edge[color=Gray]
        node{\tiny$1 - \frac{1}{q^6}$} ([xshift=15mm, yshift=-8mm]I7);
        
\path
(I8)    edge[loop above, color=Bittersweet]
        node{\tiny$\frac{1}{q^{10}}$}     (I8)
        edge[bend left=40, color=Bittersweet]
        node{\tiny$\frac{(q-1)}{q^9}$}  ([xshift=-18.25mm, yshift=-21.25mm]I8)
        edge[bend left=0, color=Bittersweet]
        node[yshift=3mm,xshift=1mm]{\tiny$\frac{(q-1)^2}{q^9}$}  ([xshift=-36.5mm, yshift=-21.25mm]I8)
        edge[color=Bittersweet, swap]
        node[yshift=-2mm]{\tiny$\frac{(q-1)^2}{q^8}$}  ([xshift=-54.75mm, yshift=-21.25mm]I8)
        edge[bend right, color=Bittersweet]
        node{\tiny$\frac{1}{q^9}$}  ([xshift=-18.25mm, yshift=2.25mm] I8)
        edge[bend right, color=Bittersweet]
        node{\tiny$\frac{(q-1)}{q^9}$}  ([xshift=-36.5mm, yshift=4.5mm] I8)
        edge[bend right, color=Bittersweet, swap]
        node{\tiny$\frac{(q-1)}{q^8}$}  ([xshift=-54.75mm, yshift=4.5mm] I8)
        edge[color=Gray]
        node{\tiny$1 - \frac{1}{q^6}$} ([xshift=15mm, yshift=-8mm]I8);
\end{tikzpicture}}
\end{center}
\begin{center}
\hspace*{-3.5cm}
\scalebox{0.55}{\begin{tikzpicture}[->, >=stealth', auto, semithick, node distance=2.25cm,  no arrow/.style={-,every loop/.append style={-}}]
\tikzstyle{every state}=[draw, rectangle, minimum height = 1cm, minimum width = 1cm, fill=white,draw=black,thick,text=black,scale=1,align=center]

\node[state,fill=black]    (A)   {};
\node[state,fill={rgb:red,1;green,2;blue,5}]    (B1)[above of=A]   {};
\node[state,fill=ProcessBlue]    (B2)[right of=B1]   {};
\node[state,fill={rgb:red,1;green,2;blue,5}]    (C1)[above of=B1]   {};
\node[state,fill=magenta]    (C2)[above of=B2]   {};
\node[state,fill=LimeGreen]    (C3)[right of=C2]   {};
\node[state,fill={rgb:red,1;green,2;blue,5}]    (D1)[above of=C1]   {};
\node[state,fill=magenta]    (D2)[above of=C2]   {};
\node[state,fill=Orchid]    (D3)[above of=C3]   {};
\node[state,fill=Goldenrod]    (D4)[right of=D3]   {};
\node[state,fill={rgb:red,1;green,2;blue,5}]    (E1)[above of=D1, yshift=1cm]   {};
\node[state,fill=magenta]    (E2)[above of=D2, yshift=1cm]   {};
\node[state,fill=Orchid]    (E3)[above of=D3, yshift=1cm]   {};
\node[state,fill=Goldenrod]    (E4)[right of=E3, xshift=2cm]   {};
\node[state,fill={rgb:red,1;green,2;blue,5}]    (F1)[above of=E1]   {};
\node[state,fill=magenta]    (F2)[above of=E2]   {};
\node[state,fill=Orchid]    (F3)[above of=E3]   {};
\node[state,fill=Goldenrod]    (F4)[above of=E4]   {};
\node[state,fill=Goldenrod]    (F5)[right of=F4]   {};
\node[state,fill={rgb:red,1;green,2;blue,5}]    (G1)[above of=F1]   {};
\node[state,fill=magenta]    (G2)[above of=F2]   {};
\node[state,fill=Orchid]    (G3)[above of=F3]   {};
\node[state,fill=Goldenrod]    (G4)[above of=F4]   {};
\node[state,fill=Goldenrod]    (G5)[above of=F5]   {};
\node[state,fill=Goldenrod]    (G6)[right of=G5]   {};
\node[state,fill={rgb:red,1;green,2;blue,5}]    (H1)[above of=G1]   {};
\node[state,fill=magenta]    (H2)[above of=G2]   {};
\node[state,fill=Orchid]    (H3)[above of=G3]   {};
\node[state,fill=Goldenrod]    (H4)[above of=G4]   {};
\node[state,fill=Goldenrod]    (H5)[above of=G5]   {};
\node[state,fill=Goldenrod]    (H6)[above of=G6]   {};
\node[state,fill=Goldenrod]    (H7)[right of=H6]   {};
\node[state,fill={rgb:red,1;green,2;blue,5}]    (I1)[above of=H1]   {};
\node[state,fill=magenta]    (I2)[above of=H2]   {};
\node[state,fill=Orchid]    (I3)[above of=H3]   {};
\node[state,fill=Goldenrod]    (I4)[above of=H4]   {};
\node[state,fill=Goldenrod]    (I5)[above of=H5]   {};
\node[state,fill=Goldenrod]    (I6)[above of=H6]   {};
\node[state,fill=Goldenrod]    (I7)[above of=H7]   {};
\node[state,fill=Goldenrod]    (I8)[right of=I7]   {};

\path (D1) -- node[auto=false]{\Huge\mydots} (E1);
\path (D2) -- node[auto=false]{\Huge\mydots} (E2);
\path (D3) -- node[auto=false]{\Huge\mydots} (E3);
\path (E3) -- node[auto=false]{\Huge\myhdots} (E4);
\path (F3) -- node[auto=false]{\Huge\myhdots} (F4);
\path (G3) -- node[auto=false]{\Huge\myhdots} (G4);
\path (H3) -- node[auto=false]{\Huge\myhdots} (H4);
\path (I3) -- node[auto=false]{\Huge\myhdots} (I4);
\path (D4) -- node[auto=false, rotate=-35]{\Huge\mydots} (E4);

\path
(I8)    edge[loop above, color=Bittersweet]
        node{}     (I8)
        edge[bend right, color=Bittersweet]
        node{}  (I7)
        edge[bend right, color=Bittersweet]
        node{}  (I6)
        edge[bend right, color=Bittersweet]
        node{}  (I5)
        edge[bend left, color=Bittersweet]
        node{}  (H7)
        edge[bend left=0, color=Bittersweet]
        node{}  (H6)
        edge[color=Bittersweet]
        node{}  (H5)
(I7)    edge[bend right, color=ForestGreen]
        node{}  (I4)
        edge[bend left=0, color=ForestGreen]
        node{}  (H4)
(I6)    edge[bend right=40, color=ForestGreen]
        node{}  ([yshift=4mm]$ (I4) !.20! (I3) $)
        edge[bend left=2, color=ForestGreen]
        node{}  ([yshift=5mm]$ (H4) !.20! (H3) $)
(I5)    edge[bend right=60, color=ForestGreen]
        node{}  ([yshift=4mm]$ (I4) !.30! (I3) $)
        edge[color=ForestGreen]
        node{}  ([yshift=5mm]$ (H4) !.30! (H3) $)
(I4)    edge[bend right=80, color=ForestGreen]
        node{}  ([yshift=4mm]$ (I4) !.40! (I3) $)
        edge[color=ForestGreen]
        node{}  ([yshift=5mm]$ (H4) !.40! (H3) $)

(H7)    edge[bend left, color=RubineRed]
        node{}  (G6)
        edge[bend left=0, color=RubineRed]
        node{}  (G5)
        edge[color=RubineRed]
        node{}  (G4)
(H6)    edge[bend left=2, color=RoyalPurple]
        node{}  ([yshift=5mm]$ (G4) !.20! (G3) $)
        
(H5)    edge[bend left=0, color=RoyalPurple]
        node{}  ([yshift=5mm]$ (G4) !.30! (G3) $)
(H4)    edge[bend left=0, color=RoyalPurple]
        node{}  ([yshift=5mm]$ (G4) !.40! (G3) $)        

(G6)    edge[bend left, color=RubineRed]
        node{}  (F5)
        edge[bend left=0, color=RubineRed]
        node{}  (F4)
        edge[bend left=2, color=RubineRed]
        node{}  ([yshift=5mm]$ (F4) !.20! (F3) $)
(G5)    edge [color=RoyalPurple]
        node{}  ([yshift=5mm]$ (F4) !.30! (F3) $)
(G4)    edge[bend left=0, color=RoyalPurple]
        node{}  ([yshift=5mm]$ (F4) !.40! (F3) $)

(F5)    edge[bend left, color=RubineRed]
        node{}  (E4)
        edge[color=RubineRed]
        node{}  ([yshift=4mm]$ (E4) !.20! (E3) $)
        edge[bend right=5, color=RubineRed]
        node{}  ([yshift=4mm]$ (E4) !.30! (E3) $)
(F4)    edge[bend left=0, color=RoyalPurple]
        node{}  ([yshift=4mm]$ (E4) !.40! (E3) $)
        
($ (E4) !.55! (D4) $)   edge[bend left, color=RubineRed] 
                        node{} (D4)
([xshift=-3mm]$ (E4) !.55! (D4) $)   edge[bend left=0, color=RubineRed] 
                        node{} (D3)
([xshift=-8mm]$ (E4) !.55! (D4) $)   edge[bend right=5, color=RubineRed]
                        node{} (D2)
([yshift=-18mm]$ (E4) !.8! (E3) $)   edge[bend right=4, color=RoyalPurple] 
                        node{} (D1)
                        
([yshift=5mm]$ (I4) !.80! (I3) $)   edge[bend right=30, color=ForestGreen] 
                        node{} (I1)
                        
([yshift=5mm]$ (I4) !.70! (I3) $)   edge[bend right=40, color=ForestGreen] 
                        node{} (I2)
([yshift=5mm]$ (I4) !.60! (I3) $)   edge[bend right=60, color=ForestGreen] 
                        node{} (I3)

([yshift=-5mm]$ (I4) !.80! (I3) $)   edge[bend right=4, color=ForestGreen] 
                        node{} (H1)
([yshift=-5mm]$ (I4) !.70! (I3) $)   edge[bend right=7, color=ForestGreen] 
                        node{} (H2)
([yshift=-5mm]$ (I4) !.60! (I3) $)   edge[bend right=7, color=ForestGreen] 
                        node{} (H3)
([yshift=-5mm]$ (H4) !.80! (H3) $)   edge[bend right=4, color=RoyalPurple] 
                        node{} (G1)
([yshift=-5mm]$ (H4) !.70! (H3) $)   edge[bend right=7, color=RoyalPurple] 
                        node{} (G2)
([yshift=-5mm]$ (H4) !.60! (H3) $)   edge[bend right=7, color=RoyalPurple] 
                        node{} (G3)
([yshift=-5mm]$ (G4) !.80! (G3) $)   edge[bend right=4, color=RoyalPurple] 
                        node{} (F1)
([yshift=-5mm]$ (G4) !.70! (G3) $)   edge[bend right=7, color=RoyalPurple] 
                        node{} (F2)
([yshift=-5mm]$ (G4) !.60! (G3) $)   edge[bend right=7, color=RoyalPurple] 
                        node{} (F3)
([yshift=-5mm]$ (F4) !.80! (F3) $)   edge[bend right=4, color=RoyalPurple] 
                        node{} (E1)
([yshift=-5mm]$ (F4) !.70! (F3) $)   edge[bend right=7, color=RoyalPurple] 
                        node{} (E2)
([yshift=-5mm]$ (F4) !.60! (F3) $)   edge[bend right=7, color=RoyalPurple] 
                        node{} (E3)
(E4)    edge[bend left, color=RubineRed]
        node{}  ($ (E4) !.42! (D4) $)
        edge[bend left=0, color=RubineRed]
        node{}  ([yshift=-14mm]$ (E4) !.30! (E3) $)
        edge [color=RubineRed]
        node{}  ([yshift=-14mm]$ (E4) !.40! (E3) $)

(D4)    edge[bend left, color=RubineRed]
        node{} (C3)
        edge[bend left=0, color=RubineRed]
        node{} (C2)
        edge [color=RubineRed]
        node{} (C1)
        
(C3)    edge[bend left, color=MidnightBlue]
        node{} (B2)
        edge [color=MidnightBlue]
        node{} (B1)

(B2)    edge[bend left, color=Sepia]
        node{}  (A);

\end{tikzpicture}}
\caption{The Markov Chain structure when $\pp \mid (2)$ for $e\ge 3.$}
\label{markovp2e3}
\end{center}
\end{figure}

\begin{proposition}
\label{p=2_total}
For $\mathfrak{p} \mid (2)$ is a prime ideal in $K$ and $c \geq 1,$ let $q := N_{K/\mathbb{Q}}(\mathfrak{p}).$ If $e = 1$, we have
$$
\delta_{K,\mathfrak{p}}(c)=
{\small
\begin{cases}
\displaystyle\ 1 - \frac{(q - 1)(6q^{10} + 9q^{9} + 7q^{8} + 8q^7 + 7q^6 + 9q^5 + 6q^4 + 6q + 3)}{6q(q + 1)(q^{10} - 1)} \ \ \ \ &{\text if }\ c=1,\\\\[6pt]
\ \ \ \displaystyle\ \frac{(q - 1)(2q^{11} + 2q^{10} + q^9 + 2q^8 + q^7 + 2q^6 + 2q^5 + 2q^2 - 1)}{2q^2(q + 1)(q^{10}-1)}  &{\text if}\  c=2,\\\\[6pt]
\displaystyle\ \ \  \frac{(q - 1)(q^{10} + q^7 + q - 1)}{2q^3(q^{10}-1)}   \ \ \ \ &{\text if}\ c=3,\\\\[6pt]
\displaystyle\ \ \  \frac{(q - 1)(q^{10} + q^9 + 3q - 3)}{6q^4(q^{10}-1)}  \ \ \ \ &{\text if}\  c=4,\\[14pt]
\displaystyle \ \ \  \frac{(q - 1)^2}{2q^{c}(q^{10}-1)}   \ \ \ \ &{\text if }\ c\geq 5.\\
\end{cases}
}
$$

If $e = 2,$ we have
$$
\delta_{K,\mathfrak{p}}(c)=
{\small \begin{cases}
\displaystyle\ 1 - \frac{(q - 1)(6q^{18} + 10q^{17} + 8q^{16} + 7q^{15} + 9q^{14} + 6q^{13} + 6q^{10} + 9q^9)}{6q^9(q + 1)(q^{10} - 1)} + O\left(\frac{1}{q^{11}}\right)
\ \ \ \ &{\text if }\ c=1,\\[14pt]
 \ \ \ \\\ \displaystyle \frac{(q - 1)(2q^{19} + 3q^{18} + 2q^{17} + q^{16} + 2q^{15} + 2q^{14} + 2q^{11} + 2q^{10}))}{2q^{10}(q + 1)(q^{10}-1)} + O\left(\frac{1}{q^{11}} \right) \ \ \ \  &{\text if}\  c=2, \\\\ 
\displaystyle\ \ \  \frac{(q - 1)(q^2 + 1)(q^4 - q^2 + 1)(q^{10} + q - 1)}{2q^{11}(q^{10}-1)}   \ \ \ \ &{\text if}\ c=3, \\\\
\displaystyle\ \ \  \frac{(q - 1)(q^{19} + q^{18} + q^{10} - q^8 + 3q - 3)}{6q^{12}(q^{10}-1)}  \ \ \ \ &{\text if}\  c=4, \\\\
\displaystyle \ \ \  \frac{(q - 1)^2}{2q^{8+c}(q^{10}-1)}   \ \ \ \ &{\text if }\ c\geq 5.\\
\end{cases}}
$$
The exact proportions for $e=2$ and the proportions for $e\geq 3$ are given in the extended version of the paper \cite{choi2021tamagawa}. 

\end{proposition}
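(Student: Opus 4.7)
The plan is to follow the template of \Cref{p=5_total} and \Cref{p=3_total}, now running the Markov chains in Figures \ref{markovp2e1}, \ref{markovp2e2}, and \ref{markovp2e3} on top of the local densities catalogued in \Cref{p=2_pipelines}. Since a short Weierstrass model $E(a_4,a_6)$ has $\alpha_1=\alpha_3=\infty$, the appropriate starting node is $F(\geq e,\geq e)$, whose non-minimal members are identified with $F(\geq e-1,\geq e-3)$ by \Cref{p=2_markov4}; Lemmas \ref{p=2_markov}, \ref{p=2_markov2}, and \ref{p=2_markov3} then describe the reclassification of non-minimal curves in every other node. Together these lemmas make the Markov chain acyclic except for the self-loop at $F(\geq e,\geq e)$, whose weight is read from the non-minimality proportion in \Cref{p=2_pipelines} and which I will extract as a common geometric-series prefactor $\sum_{k\geq 0} q^{-10k} = q^{10}/(q^{10}-1)$.

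Next, for each Kodaira type $T$ and Tamagawa number $c$, I would compute the mass of curves terminating with that local data at a prescribed node $F(\alpha_1,\alpha_3)$ by multiplying (i) the probability of ever visiting $F(\alpha_1,\alpha_3)$ along the Markov chain (a product of the edge weights in the relevant figure) with (ii) the local termination density $\psi_{K,\mathfrak{p}}(T,c;\alpha_1,\alpha_3)$ from \Cref{p=2_pipelines}. Summing these contributions across all terminating nodes and then across all $T$ sharing the given $c$ yields $\delta_{K,\mathfrak{p}}(c)$. For $e=1$ the chain has only three terminating nodes (\Cref{markovp2e1}), and the closed form for $\delta_{K,\mathfrak{p}}(c)$ falls out directly; for $e=2$ (\Cref{markovp2e2}) there are six terminating nodes, which still produces a manageable expression.

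The main obstacle is the $e\geq 3$ case, where the shift $(\alpha_1,\alpha_3)\mapsto(\alpha_1-1,\alpha_3-3)$ from Lemmas \ref{p=2_markov}--\ref{p=2_markov3} forces non-minimal curves through a ladder of intermediate families whose length grows linearly in $e$, and whose transition weights depend both on $e$ and on the parity of the number of applied shifts. To keep the bookkeeping tractable, I would organize the computation by column in \Cref{markovp2e3}, first computing, for each column, the total probability that a curve leaving $F(\geq e,\geq e)$ ever lands in that column (a finite sum indexed by the number of $(-1,-3)$ shifts applied before termination), then summing the in-column termination weights $\psi_{K,\mathfrak{p}}(T,c;\alpha_1,\alpha_3)$ against the resulting distribution. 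Finally, I would pull out the self-loop factor $q^{10}/(q^{10}-1)$ and collect like terms in $q$. The error terms $O(q^{-k})$ appearing in the stated formulae for $e=2$ (and analogously for $e\geq 3$) correspond to the tails of these ladder sums, which are controlled by geometric series in $q^{-8}$ or $q^{-4}$ depending on the parity of $e$; the exact rational functions in $q$, being lengthy, are recorded in \Cref{egeq3} and \Cref{uglyformulae} rather than written out here.
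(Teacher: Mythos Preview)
Your proposal is correct and follows essentially the same approach as the paper: start at $F(\geq e,\geq e)$, traverse the Markov chains of Figures~\ref{markovp2e1}--\ref{markovp2e3}, multiply arrival probabilities by the $\psi_{K,\mathfrak{p}}$-values from \Cref{p=2_pipelines}, sum over nodes and over Kodaira types, and factor out the self-loop contribution $q^{10}/(q^{10}-1)$. The paper's own proof is in fact just a two-sentence sketch pointing to \Cref{p=3_total} as a template and to \Cref{egeq3}/\Cref{uglyformulae} for the bookkeeping, so you have actually given more detail than the paper does. One small correction: the $O(\cdot)$ terms in the $e=2$ statement are not tails of infinite ladder sums but merely truncations of the exact rational functions recorded in the appendix; for each fixed $e$ the chain is finite and all proportions are exact.
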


{
\begin{proof}
The proof is very similar to \Cref{p=3_total}: we compute the proportion of curves which reach each of the $(e+1)(e+2)/2$ non-terminal nodes, then sum and scale the proportions by $\frac{q^{10}}{q^{10}-1}$ to account for curves which initially loop back to $F(\geq e, \geq e)$. 
\end{proof}}

\section{Proofs of the Main Results}
\label{proofs}
In this section, we make use of the computed local densities to prove our main results.


\begin{proof}[Proof of \Cref{l-series}]
That 
\begin{equation} \label{product}
    \prod_{\mathfrak{p}} \left(\frac{\delta_{K,\mathfrak{p}}(1)}{1^s}+\frac{\delta_{K,\mathfrak{p}}(2)}{2^s}+\frac{\delta_{K,\mathfrak{p}}(3)}{3^s}+\dots\right) =\sum_{m=1}^{\infty}\frac{P_{\Tam}(K,m)}{m^s} 
\end{equation} follows directly from expansion of the product over $\pp$, but for $P_{\Tam}(K,m)$ to be well-defined, we must additionally show that its value given by the left hand side of \Cref{product} converges. From Corollary~\ref{p=5_total}, for a prime ideal $\pp$ such that $\pp \nmid (6)$, we have that $1 - \frac{1}{q^2} < \delta_{K,\mathfrak{p}}(1) < 1,$ where $q := N_{K/\Q}(\pp).$ Therefore, the convergence of $P_{\Tam}(K,m)$ follows from the convergence of $\zeta(2)$. 
\end{proof}

\begin{proof}[Proof of \Cref{cor}]
By \Cref{l-series}, 
\begin{equation}
    \prod_{\mathfrak{p}} \left(\frac{\delta_{K,\mathfrak{p}}(1)}{1^s}+\frac{\delta_{K,\mathfrak{p}}(2)}{2^s}+\frac{\delta_{K,\mathfrak{p}}(3)}{3^s}+\dots\right) =\sum_{m=1}^{\infty}\frac{P_{\Tam}(K,m)}{m^s}.
\end{equation} Therefore, the expansion of the left hand side gives $P_\Tam(K,1) = \prod_{\pp} \delta_{K,\pp}(1)$, which converges as shown in the proof of \Cref{l-series}. 

Now, setting $m = -1$ in \Cref{l-series}, the average Tamagawa number $L_{\Tam}(K, -1)$ is given as 
\begin{equation}
\label{avg}
    L_{\Tam}(K, -1)=\sum_{m=1}^{\infty}P_{\Tam}(K,m) m =\prod_{\mathfrak{p}} \left(\delta_{K,\mathfrak{p}}(1)+2\delta_{K,\mathfrak{p}}(2)+3\delta_{K,\mathfrak{p}}(3)+\dots\right).
\end{equation}
By Corollary~\ref{p=5_total}, for prime ideal $\pp$ with $\pp \nmid (6)$ and $q := N_{K/\Q}(\pp),$ we have that $\delta_{K,\mathfrak{p}}(1) = 1 - \frac{1}{q^2} + O(1/q^3)$ and $0 < c\delta_{K,\mathfrak{p}}(c) < \frac{c}{q^c}$ for $c \ge 2.$ Since $\sum_{c=2}^\infty \frac{c}{q^c} = \frac{2q-1}{(q-1)^2q} = \frac2{q^2} + O(\frac1{q^3})$, we obtain
\begin{equation}
\sum_{c=1}^\infty c\delta_{K,\mathfrak{p}}(c) = 1 + \frac{1}{q^2} + O\left(\frac{1}{q^3} \right).    
\end{equation}

From Propositions \ref{p=3_total} and \ref{p=2_total}, when $\pp \mid (6)$, letting $q := N_{K/\Q}(\pp)$, we have 
\begin{equation}
    \sum_{c\ge 1} c\delta_{K,\mathfrak{p}}(c) \le \delta_{K,\mathfrak{p}}(1) + \sum_{k = 1}^\infty (4+k)(1 - \delta_{K,\mathfrak{p}}(1))\frac{q-1}{q^{k}} = 5 - 4\delta_{K,\mathfrak{p}}(1) + \frac{(1 - \delta_{K,\mathfrak{p}}(1))}{q-1}.
\end{equation}
Therefore, $L_{\Tam}(K, -1)$ must converge as sought.
\end{proof}

{
\begin{proof}[Proof of \Cref{d_range}]
We begin by establishing bounds on $P_{\Tam}(K, 1)$ with respect to $d$. Recall from \Cref{cor} that 
\begin{equation} \label{exp}
    P_\Tam(K, 1) = \prod_{\mathfrak{p}} \delta_{K,\mathfrak{p}}(1).
\end{equation}

We first establish a lower bound on $P_{\Tam}(K, 1)$ with respect to $d$. From Propositions \ref{p=5_total}, \ref{p=3_total}, and \ref{p=2_total}, $\delta_{K, \pp}(1)$ is at least when each $\mathfrak{p}$ is unramified and has residue field degree 1. If $\mathfrak{p}$ is unramified and has residue field degree 1, then $q = N_{K/\Q}(\pp) = p$ and
\begin{equation} 
   P_\Tam(\Q, 1)^d \le P_\Tam(K, 1). 
\end{equation}
From \cite{og}, $P_\Tam(\Q, 1) = 0.5054\ldots$. Therefore, 
\begin{equation}\label{lower}
    (0.5054)^{d} < P_\Tam(\Q, 1)^{d} \le P_\Tam(K, 1). 
\end{equation} 

Next, we establish a lower bound on $P_{\Tam}(K, 1)$ with respect to $d$. Again, from Propositions \ref{p=5_total}, \ref{p=3_total}, and \ref{p=2_total}, $\delta_{K, \pp}$ is at most when each $p$ is inert. When $p$ is inert, $q := N_{K/\Q}(\pp) = p^d$. Furthermore, from Propositions \ref{p=5_total} and \ref{p=3_total}, when $\pp \nmid (2)$,
\begin{equation} \label{not2}
    \delta_{K, \pp}(1) \le 1 - \frac{1}{q^2} + \frac{1}{q^3} = 1 - \frac{1}{p^{2d}} + \frac{1}{p^{3d}} 
\end{equation} and from \Cref{p=2_total}, when $\pp \mid (2)$, 
\begin{equation} \label{yes2}
    \delta_{K, \pp}(1) \le 1 - \frac{1}{q}+ \frac{1}{q^2} = 1 - \frac{1}{2^d}+ \frac{1}{2^{2d}}.
\end{equation}

Now collecting \Cref{exp}, \Cref{not2}, and \Cref{yes2}, we have that 
\begin{equation} \label{pre}
     P_\Tam(K, 1) = \prod_{\mathfrak{p}} \delta_{K,\mathfrak{p}}(1) \leq \left(1 - \frac{1}{2^d} + \frac{1}{2^{2d}}\right)\prod_{p\ge 3 \text{ prime}}\left(1 - \frac{1}{p^{2d}} + \frac{1}{p^{3d}}\right). 
\end{equation}

Because $\prod_{p \text{ prime}}\left(1 - \frac{1}{p^{2d}} \right) = \frac{1}{\zeta(2d)}$ \cite{mazur_stein_2016}, the upper bound on $P_\Tam(K, 1)$ in the theorem statement follows from \Cref{pre} if 
\begin{equation} \label{sufficient}
    \left(1 - \frac{1}{2^d} + \frac{1}{2^{2d}}\right)\prod_{p\ge 3 \text{ prime}}\left(1 - \frac{1}{p^{2d}} + \frac{1}{p^{3d}}\right) \le \prod_{p \text{ prime}}\left(1 - \frac{1}{p^{2d}} \right).
\end{equation}

By combining the denominators of the terms within each parenthesis of \Cref{sufficient}, we can rewrite \Cref{sufficient} as 
\begin{equation} \label{combine}
    \left(\frac{2^{2d}-2^{d}+1}{2^{2d}}\right)\prod_{p\ge 3 \text{ prime}}\left(\frac{p^{3d}-p^{d}+1}{p^{3d}}\right) \le \prod_{p \text{ prime}}\left(\frac{p^{2d}-1}{p^{2d}} \right).
\end{equation}

After dividing each side of \Cref{combine} by $\left(\frac{2^{2d}-2^{d}+1}{2^{2d}}\right) \prod_{p \geq 3 \text{ prime}}\left(\frac{p^{2d}-1}{p^{2d}} \right)$, \Cref{combine} is equivalent to
\begin{equation} \label{show}
   \prod_{p\ge 3 \text{ prime}}\left(\frac{p^{3d}-p^{d}+1}{p^{d}(p^{2d}-1)} \right) = \prod_{p\ge 3 \text{ prime}}\left (1 + \frac{1}{p^{3d}-p^d} \right) \le \frac{2^{2d}-1}{2^{2d}-2^{d}+1} = 1 + \frac{2^d - 2}{2^{2d}-2^d + 1}.
\end{equation}

Now, because
\begin{equation}
    \prod_{p\ge 3 \text{ prime}}\left (1 + \frac{1}{p^{3d}-p^d} \right) \le \prod_{p \text{ prime}} \left (1 + \frac{1}{p^{3d-1}-1} \right) = \zeta(3d - 1),
\end{equation}
and 
\begin{equation} \label{zeta}
    \zeta(3d - 1) \le 1 + \frac{1}{2^{3d-1}} + \int_2^\infty \frac{1}{x^{3d-1}}\,dx = 1 + \frac{1}{2^{3d-1}} + \frac{1}{(3d-2)  2^{3d-2}}
\le 1 + \frac{2^d - 2}{2^{2d}-2^d + 1}, 
\end{equation}
\Cref{show} holds true. Therefore, \Cref{sufficient} holds true. Thus, combining \Cref{pre} and \Cref{sufficient}, we have that 
\begin{equation} \label{upper}
    P_\Tam(K, 1) \le \frac{1}{\zeta(2d)} = (-1)^{d+1}\frac{2(2d)!}{B_{2d}(2\pi)^{2d}},
\end{equation}
where the last equality follows from a well-known result by Euler \cite{euler}. 

We now establish bounds on $L_{\Tam}(K, -1)$ with respect to $d$. Recall from \Cref{cor} that 
\begin{equation} 
    L_\Tam(K, -1) =  \prod_{\substack{\mathfrak{p}}}  \sum_{m=1}^\infty \delta_{K,\mathfrak{p}}(m)m. 
\end{equation}

We begin by establishing an upper bound on $L_{\Tam}(K, -1)$ with respect to $d$. From Propositions \ref{p=5_total}, \ref{p=3_total}, and \ref{p=2_total}, $L_{\Tam}(K, -1)$ is at most when each $p$ splits completely. When $p$ splits completely, $q := N_{K/\Q}(\pp) = p$, and
\begin{equation}
    L_\Tam(K,-1) \le  L_\Tam(\Q, -1)^d.
\end{equation}
From \cite{og}, $ L_\Tam(\Q, -1) = 1.8184\ldots$. Therefore,
\begin{equation}
    L_\Tam(K,-1) \le L_\Tam(\Q, -1)^d < (1.8184)^d. 
\end{equation}

We now establish a lower bound on $L_{\Tam}(K, -1)$ with respect to $d$. Again, from Propositions \ref{p=5_total}, \ref{p=3_total}, and \ref{p=2_total}, $L_{\Tam}(K, -1)$ is at least when each $p$ is inert. When $p$ is inert, $q:=N_{K/\Q}(\pp) = p^d$. Now, because for each $\pp$,
\begin{equation}
 \delta_{K,\mathfrak{p}}(1) + 2(1 - \delta_{K,\mathfrak{p}}(1)) = 2 - \delta_{K,\mathfrak{p}}(1) \leq \sum_{m=1}^\infty \delta_{K,\mathfrak{p}}(m)m , 
\end{equation}
from Propositions \ref{p=5_total}, \ref{p=3_total}, and \ref{p=2_total}, we have that 
\begin{equation} \label{llower}
   \left(1 + \frac{1}{2^d} - \frac{1}{2^{2d}} \right)\prod_{p\ge3 \text{ prime}} \left(1 + \frac{1}{p^{2d}} - \frac{1}{p^{3d}} \right) \leq L_{\Tam}(K, -1).
\end{equation}

Now, because $\prod_{p \text{ prime}} \left(1 + \frac{1}{p^{2d}} \right) = \frac{\zeta(2d)}{\zeta(4d)}$, if we show that
\begin{equation} \label{prev}
   \prod_{p \text{ prime}} \left(1 + \frac{1}{p^{2d}} \right) \leq \left(1 + \frac{1}{2^d} - \frac{1}{2^{2d}} \right)\prod_{p\ge3 \text{ prime}} \left(1 + \frac{1}{p^{2d}} - \frac{1}{p^{3d}} \right),
\end{equation} then the statement of the theorem follows from \Cref{llower}. Now, combining the denominator across the terms within the parentheses of \Cref{llower}, we can rewrite \Cref{llower} as 
\begin{equation} \label{prev}
   \prod_{p \text{ prime}} \left( \frac{p^{2d}+1}{p^{2d}} \right) \leq \left(\frac{2^{2d}+2^{d}-1}{2^{2d}} \right)\prod_{p\ge3 \text{ prime}} \left(\frac{p^{3d}+p^{d} -1}{p^{3d}} \right). 
\end{equation}
Dividing each side by $\frac{2^{2d}+2^{d}-1}{2^{2d}}\prod_{p\ge3 \text{ prime}} \left(\frac{p^{2d}+1}{p^{2d}} \right)$, \Cref{prev} is equivalent to
\begin{equation} \label{showthis}
    \frac{2^{2d}+1}{2^{2d}+2^{d}-1} = 1 - \frac{2^d - 2}{2^{2d}+2^d-1} \leq \prod_{p\ge3 \text{ prime}} \left( \frac{p^{3d}+p^{d}-1}{p^{3d}+p^{d}}\right) = \prod_{p\ge3 \text{ prime}} \left(1 - \frac{1}{p^{3d}+p^{d}}\right).
\end{equation}
Now, because 
\begin{equation}
    \frac{1}{\zeta(3d)} = \prod_{p\ge3 \text{ prime}} \left(1 - \frac{1}{p^{3d}}\right) \leq  \prod_{p\ge3 \text{ prime}} \left(1 - \frac{1}{p^{3d}+p^{d}}\right),
\end{equation} \Cref{showthis} follows if we show that
\begin{equation} \label{showthiss}
     1 - \frac{2^d - 2}{2^{2d}+2^d-1} \leq \frac{1}{\zeta(3d)}. 
\end{equation}

Now, because 
\begin{equation} \label{simplezeta}
   \zeta(3d) \le 1 + \frac{1}{2^{3d}} + \int_2^\infty \frac{1}{x^{3d}}\,dx = 1 + \frac{1}{2^{3d}} + \frac{1}{(3d-1)2^{3d-1}}
\end{equation}
and
\begin{equation}
     1 - \frac{2^d - 2}{2^{2d}+2^d-1} \leq 1 - \frac{3d+1}{2^{3d}(3d-1)+3d+1}  = \frac{2^{3d}(3d-1)}{(2^{3d}+1)(3d-1) + 2} =  \frac{1}{1+\frac{1}{2^{3d}}+ \frac{1}{2^{3d-1}(3d-1)}},
\end{equation} \Cref{showthiss} holds true. Now, combining \Cref{llower} and \Cref{prev}, we have that
\begin{equation}
    \frac{\zeta(2d)}{\zeta(4d)} = (-1)^d\frac{B_{2d}(4d)!}{B_{4d}(2d)!(2\pi)^{2d}} \le L_\Tam(K, -1),
\end{equation}
where the equality follows from Euler \cite{euler}. 
\end{proof}
}

{
\begin{proof}[Proof of \Cref{d1d2}]
It suffices to explicitly construct a family of multiquadratic fields whose Tamagawa trivial proportions tend to $0$ and a family of cyclotomic fields whose proportions tend to $1$.

We begin by proving $\liminf_{d\to +\infty} t^{-}(d) = 0$. Let $p_1 < p_2 < p_3 < \dots$ be an infinite sequence of $1 \pmod{8}$ primes. Consider the sequence of multiquadratic field extensions
\begin{align*}
    K_1 &= \mathbb{Q}(\sqrt{p_1}), \\
    K_2 &= \mathbb{Q}(\sqrt{p_1}, \sqrt{p_2}), \\
    K_3 &= \mathbb{Q}(\sqrt{p_1}, \sqrt{p_2}, \sqrt{p_3}), \\
    &\vdots
\end{align*}
Fix a field $K_i$. Since $2$ is a quadratic residue modulo $p_i$ for $1 \leq j \leq i$, $(2)$ splits completely in the number fields $\mathbb{Q}(\sqrt{p_j})$. Hence the ideal $(2)$ also splits completely in the composite field $K_i$. In particular, there are $2^i$ distinct prime ideals above $(2)$, each with inertial degree and ramification index $1$. Recall from \Cref{p=2_total} that for norm 2 unramified prime ideals $\mathfrak{p}$ we have $\delta_{K,\mathfrak{p}}(1) = 241/396$. Hence, we have an upper bound
\begin{equation}
    t^-(2^i) \leq P_\Tam(K_i;1) = \prod_{\mathfrak{p}} \delta_{K_i,\mathfrak{p}}(1) \leq \prod_{\mathfrak{p} \mid (2)} \delta_{K_i,\mathfrak{p}}(1) = (241/396)^{2^i},
\end{equation}
which tends to $0$ as $i$ grows large. Thus, $\liminf_{d\to +\infty} t^-(d) = 0$.

In fact, this sequence of fields also show that $\limsup_{d\to+\infty}\mu^+(d) = \infty$. For each $\mathfrak{p}$ above 2, we may compute $\sum_{m=1}^\infty \delta_{K,\mathfrak{p}}(m)m > 1.49$, whence
\begin{equation}
    \mu^+(2^i) \geq \prod_{\mathfrak{p}} \sum_{m=1}^\infty \delta_{K_i,\mathfrak{p}}(m)m \geq \prod_{\mathfrak{p} \mid (2)} \sum_{m=1}^\infty \delta_{K_i,\mathfrak{p}}(m)m > (1.49)^{2^i}.
\end{equation}
The right-hand side tends to infinity as $i \to \infty$, from which the claim follows.

We now prove that $\limsup_{d\rightarrow +\infty} t^+(d) = 1$ for the sequence of fields $K = \mathbb{Q}(\xi_{a})$, where $a$ is some increasing sequence of odd primes and $\xi_a = e^{2\pi i/a}$. By Propositions~\ref{p=5_total} and \ref{p=3_total}, a prime ideal $\mathfrak{p} \nmid (2)$ satisfies the bound $\delta_{K,\mathfrak{p}}(1) \geq 1 - (N_{K/\mathbb{Q}}(\mathfrak{p}))^{-1.5}$. Hence, letting $\zeta_{\Q(\xi_a)}$ be the Dedekind zeta function of $\Q(\xi_a)$, we have the lower bound
\begin{align}
    t^+(a-1) & \geq P_\Tam(K;1) = \prod_{\substack{\mathfrak{p}}} \delta_{K,\mathfrak{p}}(1)     \\ & \geq \frac{\left(1 - 2^{- \operatorname{ord}_a(2)}\right)^{\frac{a-1}{\operatorname{ord}_a(2)}}}{\left(1 - 2^{- 1.5 \operatorname{ord}_a(2)}\right)^{\frac{a-1}{\operatorname{ord}_a(2)}} } \frac{1}{\zeta_{\mathbb{Q}(\xi_a)}(1.5)}   \\ & \label{6.33} \geq \frac{\left(1 - 2^{- \operatorname{log}_2(a)}\right)^{\frac{a-1}{\operatorname{log}_2(a)}}}{\left(1 - 2^{- 1.5 \operatorname{log}_2(a)}\right)^{\frac{a-1}{\operatorname{log}_2(a)}} } \frac{1}{\zeta_{\mathbb{Q}(\xi_a)}(1.5)},
\end{align}
where the last inequality follows from the naive bound $\operatorname{ord}_a(2) \geq \log_a(2)$. It now remains to show that as $a \to \infty$, \Cref{6.33} converges to $1$.

First, it is straightforward that 
\begin{equation} \label{2_bound} 
    1 \geq \lim_{a\to \infty} \left( \frac{1 - 2^{- \operatorname{log}_2(a)}}{1 - 2^{- 1.5 \operatorname{log}_2(a)}} \right)^{^{\frac{a-1}{\operatorname{log}_2(a)}} } =  
\lim_{a\to \infty} \frac{\left(1 - a^{-1}\right)^{\frac{a-1}{\operatorname{log}_2(a)}}}{\left(1 - a^{-1.5}\right)^{\frac{a-1}{\operatorname{log}_2(a)}} } = 1.
\end{equation}

Next,
\begin{align}
    \log{\zeta_{\mathbb{Q}(\zeta_{a})}(1.5)} &= -\sum_{\mathfrak{p}} \log{(1-N_{K/\mathbb{Q}}(\mathfrak{p})^{1.5})} = \sum_{\mathfrak{p}}  \sum_{k=1}^{\infty} \frac{1}{k N_{K/\mathbb{Q}}(\mathfrak{p})^{1.5k}} \\ & \leq \sum_{\mathfrak{p}}  \frac{a-1}{ N_{K/\mathbb{Q}}(\mathfrak{p})^{1.5}} + \sum_{\mathfrak{p}}  \sum_{k=2}^{\infty} \frac{1}{k N_{K/\mathbb{Q}}(\mathfrak{p})^{1.5k}}, 
\end{align}
where the last inequality follows from at most $a-1$ primes having the same norm in $\mathbb{Q}(\zeta_{a})$.

Now, taking the limit of each side, 
\begin{align}
    \limsup_{a \to \infty} \log{\zeta_{\mathbb{Q}(\zeta_{a})}(1.5)}  \leq \limsup_{a \to \infty} \sum_{\mathfrak{p}} \frac{a-1}{ N_{K/\mathbb{Q}}(\mathfrak{p})^{1.5}}
\end{align}

Now, by Theorem 2.13 in \cite{lawerence_washington}, $N_{K/\mathbb{Q}}(\mathfrak{p}) \equiv 1 \pmod{a}$ for all $\mathfrak{p}$ unless $\mathfrak{p}^{a-1} = (a)$. Therefore, 
\begin{align}
    \limsup_{a \to \infty} \log{\zeta_{\mathbb{Q}(\zeta_{a})}(1.5)}  & \leq \limsup_{a \to \infty} \sum_{\mathfrak{p}} \frac{a-1}{ N_{K/\mathbb{Q}}(\mathfrak{p})^{1.5}} \\ & \leq \limsup_{a \to \infty} \frac{a-1}{a^{1.5}} + \limsup_{a \to \infty} \sum_{\mathfrak{p}  \nmid (a) } \frac{a-1}{ N_{K/\mathbb{Q}}(\mathfrak{p})^{1.5}}  \\  & \leq \limsup_{a \to \infty} \sum_{k=1}^{\infty} \frac{a-1}{(ak)^{1.5}} \leq \limsup_{a \to \infty} \frac{a-1}{a^{1.5}} \frac{1}{1.5-1} = 0. 
\end{align}

Thus, 
\begin{equation} \label{eq1}
    \limsup_{a \to \infty} \zeta_{\mathbb{Q}(\zeta_{a})}(1.5)  =  1. 
\end{equation}
Now, from \Cref{2_bound}, and \Cref{eq1}, we have that 
\begin{equation} \label{mega}
    \limsup_{a \to \infty} \frac{\left(1 - 2^{- \operatorname{log}_2(a)}\right)^{\frac{a-1}{\operatorname{log}_2(a)}}}{\left(1 - 2^{- 1.5 \operatorname{log}_2(a)}\right)^{\frac{a-1}{\operatorname{log}_2(a)}} } \frac{1}{\zeta_{\mathbb{Q}(\xi_a)}(1.5)} = 1. 
\end{equation}

Thus, by \Cref{6.33}, it holds that $\limsup_{d\to +\infty} t^+(d) = 1$.

Next, we show that the same sequence of fields $K = \mathbb{Q}(\xi_a)$ satisfies $\liminf_{d\to+\infty}\mu^{-}(d) = 1$. By Propositions~\ref{p=5_total} and  \ref{p=3_total}, for $\mathfrak{p} \nmid (2)$, we have that 
\begin{equation} \label{eq45}
    \sum_{m=1}^\infty \delta_{K,\pp}(m)m \leq 1 + 2N_{K/\mathbb{Q}}(\pp)^{-1.95} \leq \frac{1}{1-N_{K/\mathbb{Q}}(\pp)^{-1.5}}.
\end{equation}
In addition, from \Cref{p=2_total}, for $\mathfrak{p} \mid (2)$, we have that 
\begin{equation} \label{eq46}
    \sum_{m=1}^\infty \delta_{K,\pp}(m)m \leq 1 + 2N_{K/\mathbb{Q}}(\pp)^{-0.95} \leq \frac{1}{1-N_{K/\mathbb{Q}}(\pp)^{-1}}. 
\end{equation}

Thus, by \Cref{eq45} and \Cref{eq46}, 
\begin{align}
    \mu^-(a-1) \leq \prod_{\mathfrak{p}} \sum_{m=1}^\infty \delta_{K,\mathfrak{p}}(m)m  & \leq \frac{\left(1 - 2^{- 1.5\operatorname{ord}_a(2)}\right)^{\frac{a-1}{\operatorname{ord}_a(2)}}}{\left(1 - 2^{- \operatorname{ord}_a(2)}\right)^{\frac{a-1}{\operatorname{log}_2(a)}} } \zeta_{\mathbb{Q}(\xi_a)}(1.5) \\ & \leq \frac{\left(1 - 2^{- 1.5\operatorname{log}_2(a)}\right)^{\frac{a-1}{\operatorname{log}_2(a)}}}{\left(1 - 2^{- \operatorname{log}_2(a)}\right)^{\frac{a-1}{\operatorname{log}_2(a)}} } \zeta_{\mathbb{Q}(\xi_a)}(1.5),
\end{align}
where the last inequality follows from the naive bound $\mathrm{log}_2(a) \leq \mathrm{ord}_{a}(2)$. 

Now, by \Cref{mega}, 
\begin{equation}
    \limsup_{a \to \infty} \mu^-(a-1) \leq \limsup_{a \to \infty} \frac{\left(1 - 2^{- 1.5\operatorname{log}_2(a)}\right)^{\frac{a-1}{\operatorname{log}_2(a)}}}{\left(1 - 2^{- \operatorname{log}_2(a)}\right)^{\frac{a-1}{\operatorname{log}_2(a)}} } \zeta_{\mathbb{Q}(\xi_a)}(1.5) = 1 
\end{equation}
as sought. 
\end{proof}

\section{Examples} \label{examples}

In this section, we offer numerical examples that illustrate the results in this paper.

\begin{example}
There are finitely many imaginary quadratic fields of class number $1$. As briefly discussed in the introduction, in \Cref{trivtable}, we calculated the following values for their proportion of Tamagawa trivial curves. Then we have similar Tables \ref{tamprop2} and \ref{tamprop3}, showing the convergence to $P_{Tam}(\Q(\sqrt{-D}), 2)$ and $P_{Tam}(\Q(\sqrt{-D}),3)$ respectively. Lastly, in \Cref{avgtable} we have average Tamagawa product for each of the imaginary quadratic fields, as shown below.

{\footnotesize

\begin{center}

\begin{table}[H]
\begin{tabular}{| c | c | c | c | c | c | c | c | c | c |}
\hline
\multicolumn{10}{|c|}{$\mathcal{N}_2(X, K)/\mathcal{N}(X, K)$} \\
\hline
 $X$ & $\sqrt{-1}$ & $\sqrt{-2}$ & $\sqrt{-3}$ & $\sqrt{-7}$ & $\sqrt{-11}$ & $\sqrt{-19}$ & $\sqrt{-43}$ & $\sqrt{-67}$ & $\sqrt{-163}$   \\ \hline

 $10^4$ & $0.353$ & $0.351$ & $0.248$ & $0.354$ & $0.255$ & $0.278$ & $0.260$ & $0.284$  & $0.283$ \\
 
 $10^5$ & $0.360$ & $0.387$ & $0.253$ & $0.369$ & $0.277$ & $0.231$ & $0.220$ & $0.218$ & $0.207$ \\ 
 
 $10^6$ & $0.377$ & $0.382$ & $0.266$ & $0.382$ & $0.298$ & $0.256$ & $0.228$ & $0.211$ & $0.215$ \\ 
 
 $\vdots$ & $\vdots$ &$\vdots$ &$\vdots$ &$\vdots$ &$\vdots$ &$\vdots$ &$\vdots$ &$\vdots$ &$\vdots$ \\
 
 $\infty$ & $0.378$ & $0.384$ & $0.264$ & $0.370$ & $0.299$ & $0.265$ & $0.226$ & $0.216$ & $0.206$ \\
 \hline

\end{tabular}

\caption{Convergence to $P_\Tam(\Q(\sqrt{-D}), 2)$.}
\label{tamprop2}
\end{table}
\end{center}
}

{\footnotesize
\begin{center}

\begin{table}[H]
\begin{tabular}{| c | c | c | c | c | c | c | c | c | c |}
\hline
\multicolumn{10}{|c|}{$\mathcal{N}_3(X, K)/\mathcal{N}(X, K)$} \\
\hline
 $X$ & $\sqrt{-1}$ & $\sqrt{-2}$ & $\sqrt{-3}$ & $\sqrt{-7}$ & $\sqrt{-11}$ & $\sqrt{-19}$ & $\sqrt{-43}$ & $\sqrt{-67}$ & $\sqrt{-163}$   \\ \hline

 $10^4$ & $0.008$ & $0.023$ & $0.015$ & $0.074$ & $0.018$ & $0.027$ & $0.019$ & $0.069$  & $0.100$ \\
 
 $10^5$ & $0.015$ & $0.026$ & $0.029$ & $0.078$ & $0.035$ & $0.029$ & $0.035$ & $0.040$ & $0.089$ \\ 
 
 $10^6$ & $0.017$ & $0.025$ & $0.027$ & $0.078$ & $0.035$ & $0.024$ & $0.021$ & $0.025$ & $0.031$ \\

 $\vdots$ & $\vdots$ &$\vdots$ &$\vdots$ &$\vdots$ &$\vdots$ &$\vdots$ &$\vdots$ &$\vdots$ &$\vdots$ \\
 
 $\infty$ & $0.018$ & $0.026$ & $0.032$ & $0.082$ & $0.038$ & $0.028$ & $0.024$ & $0.024$ & $0.024$ \\
 \hline
\end{tabular}

\caption{Convergence to $P_\Tam(\Q(\sqrt{-D}), 3)$.}
\label{tamprop3}
\end{table}
\end{center}
}

\end{example}

\begin{example}
\Cref{figrealquads} (resp. \Cref{figrealquadsavg}) displays the spread of the proportion of Tamagawa trivial curves (resp. the average Tamagawa product) across square-free real quadratic fields $\mathbb{Q}(\sqrt{D})$ for $2 \leq D < 10^4$.

\begin{figure}[H]
    \centering
    \includegraphics[height=6cm]{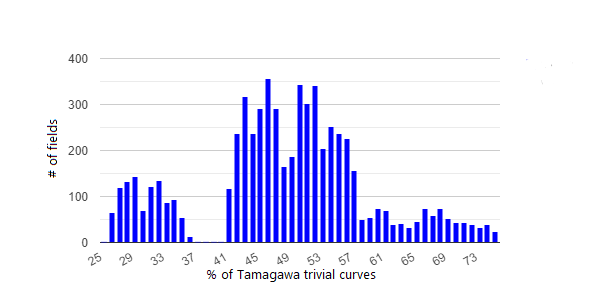}
    \caption{Distribution of $P_\Tam(K,1)$ for real quadratic number fields.}
    \label{figrealquads}
\end{figure}

\begin{figure}[H]
    \centering
    \includegraphics[height=6cm]{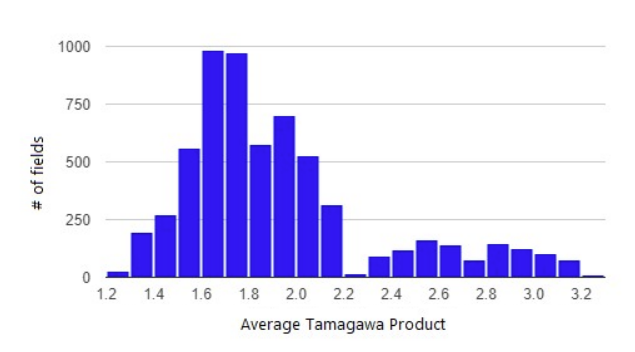}
    \caption{Distribution of $L_\Tam(K,-1)$ for real quadratic number fields.}
    \label{figrealquadsavg}
\end{figure}

In \Cref{figrealquads}, one can see that $P_{\Tam}(\Q(\sqrt{D}), 1)$ does not have a normal distribution, nor does it resemble a skewed normal distribution. Instead, there appear to be three distinct sections, from $\approx 0.26$ to $\approx 0.36$, from $\approx 0.41$ to $\approx 0.57$, and from $\approx 0.58$ to $\approx 0.75$. These three distinct regions correspond to how 2 behaves at any particular field. The left-most section corresponds to fields where 2 splits, the middle section corresponds to fields where 2 ramifies, and the right-most section corresponds to fields where 2 is inert. This leads us to many possible questions. Is the distribution uniform or random among each section? Are the proportions dense on any interval? How might these generalize over different degree fields? Are there always gaps between the sections? In \Cref{figrealquadsavg}, we see that $L_{\Tam}(Q(\sqrt{D}),-1)$ also does not have a normal distribution, but instead has two distinct sections. This leads to further questions, such as how do these sections relate to those for $P_{\Tam}(Q(\sqrt{D}),1)$? 
\end{example}

\begin{example}
From \Cref{d1d2} we have
\[\liminf_{d\rightarrow +\infty} t^-(d) = 0 \quad \text{and} \quad \limsup_{d\rightarrow +\infty} t^+(d) = 1,\] and
\[\liminf_{d\to+\infty} \mu^-(d) = 1 \quad \text{and} \quad \limsup_{d\to+\infty} \mu^+(d) = \infty.\]

In \Cref{multiquad}, we have an example of a sequence of fields with the proportion of Tamagawa trivial curves decreasing to zero, and the corresponding average Tamagawa products heading off to infinity. The sequence follows the sequence of fields constructed in the proof of \Cref{d1d2}.

\renewcommand{\arraystretch}{1.3}
\begin{center}
\begin{table}[H]
\begin{tabular}{| c || c | c | c | c | }
\hline
$K$ & $\Q(\sqrt{17})$ & $\Q(\sqrt{17}, \sqrt{41})$ & $\Q(\sqrt{17}, \sqrt{41}, \sqrt{73})$ & $\Q(\sqrt{17}, \sqrt{41}, \sqrt{73}, \sqrt{89})$ \\ \hline
     $P_{\Tam}(K, 1)$ & $0.35585$  & $0.13273$ & $0.01778$ & $0.00031$ \\ \hline
     $L_{\Tam}(K,-1)$ & $2.32335$ & $5.14423$ & $26.22779$ & $686.87874$ \\ \hline
\end{tabular}
\smallskip
\caption{The proportion of Tamagawa trivial curves in multiquadratic fields.}
\label{multiquad}
\end{table}
\end{center}

\renewcommand{\arraystretch}{2.1}

Similarly, in \Cref{zetas}, the sequence of fields $\Q(\zeta_p)$ for primes $p < 258$ excluding $p = 5, 7, 31,$ and $127$ have the proportion of Tamagawa trivial curves heading to $1$. 

\begin{figure}[H]
    \centering
    \includegraphics[height=6cm]{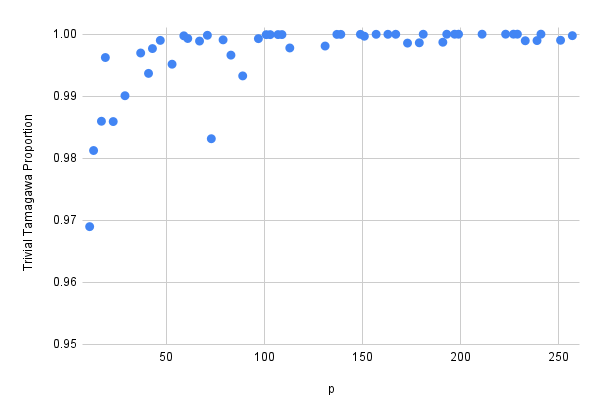}
    \caption{$P_{\Tam}(\Q(\zeta_p),1)$ for primes $p < 258$ excluding $p = 5, 7, 31,$ and $127$.}
    \label{zetas}
\end{figure}

\begin{figure}[H]
    \centering
    \includegraphics[height=6cm]{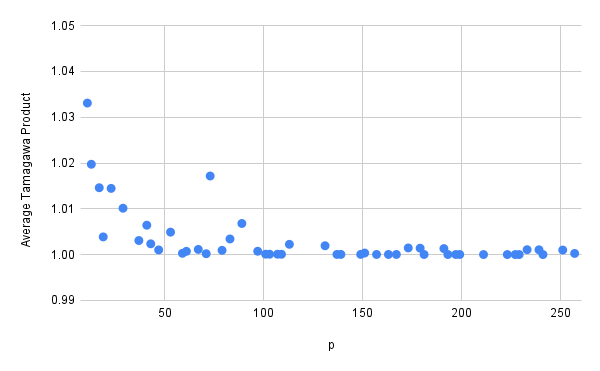}
    \caption{$L_{\Tam}(\Q(\zeta_p),-1)$ for primes $p < 258$ excluding $p = 5, 7, 31,$ and $127$.}
    \label{zetasavg}
\end{figure}

In \Cref{zetasavg}, we see that for the same sequence of fields, the average Tamagawa products converge to $1$. Note that as displayed in \Cref{tab:my_label}, the data points for $p = 5, 7, 31,$ and $127$ are outliers. For the latter three $p$, it is hinted in the proof of \Cref{d1d2} that the reason is small $\mathrm{ord}_{p}(2)$. 

\begin{table}[H]
    \centering
    \begin{tabular}{|c||c|c|c|c|}
    \hline
        $p$ & $5$ & $7$ & $31$ & $127$ \\ \hline
        $P_{\Tam}(\Q(\zeta_p),1)$ & $0.867\dots$ & $0.753\dots$ & $0.827\dots$ & $0.868\dots$ \\ \hline
         $L_{\Tam}(\Q(\zeta_p),-1)$ & $1.155\dots$ & $1.309\dots$ & $1.205\dots$ & $1.151\dots$ \\ \hline
    \end{tabular}
    \caption{Omitted values for $P_{\Tam}(\Q(\zeta_p),1)$ and $L_{\Tam}(\Q(\zeta_p),-1)$}
    \label{tab:my_label}
\end{table} 
\end{example}

\begin{example}
Many of the above examples are of simpler fields, however in our paper we are able to calculate trivial Tamagawa proportions and average Tamagawa products for all fields, regardless of class number or degree. Thus, even for a more complicated field such as $\Q(x^4+5x^2-6x+3)$ with Galois group $S_4$, we can even determine the trivial Tamagawa proportion. Note that $\Q(x^4+5x^2-6x+3)$ has $\Delta = 32880$, and thus the primes $2, 3, 5,$ and $137$ all ramify. More specifically, we have that $2 = (-8\alpha^3-\alpha^2-37\alpha+47)^2$, $3 = (-\alpha)^2(2\alpha^3+9\alpha-13)(\alpha-1)$, $5 = (-\alpha^3+\alpha-1)(-2\alpha^2+2\alpha-1)^2$, and $137 = (44\alpha^3+33\alpha^2 +235\alpha-100)(-\alpha^3-4\alpha^2-9\alpha-17)^2$. By knowing how these primes ramify, we can calculate that $P_{\Tam}(\Q(x^4+5x^2-6x+3), 1) = 0.526\dots$.
\end{example}

\appendix
\section{Classification of non-minimal models}
\label{appendix}

In this section, we classify the non-minimal short Weierstrass models at prime ideals $\mathfrak{p} \mid (3)$ and $\mathfrak{p} \mid (2)$. These results generalize the work of Griffin et al.~\cite[Lemmas 2.2, 2.3]{og}, who classify the non-minimal short Weierstrass rational elliptic curves for primes $p=2,3$. The conditions for non-minimality can be written as a set of modular equations for bounded powers of $\pi$, which allows for a parametrization for the non-minimal curves. Since we are working with primes modulo powers of $\pi$, our results depend on the size of $e$.

\begin{lemma} \label{p=3_lemma}
Let $\mathfrak{p} \subseteq K$ have ramification index $e$ over $(3)$. The curve $E(a_4,a_6)$ is not $\mathfrak{p}-$minimal if and only if there exist residues $r \pmod{\pi^{\min\{2,e\}}}$ and $w \pmod{\pi^2}$ for which
    \begin{equation*}
        a_4 \equiv -3\pi^{\max\{0,4-2e\}} r^2 + \pi^4 w \pmod{\pi^6}, \quad a_6 \equiv 2 \pi^{\max\{0,6-3e\}} r^3 - \pi^{\max\{4,6-e\}} rw \pmod{\pi^6}.
    \end{equation*}
Moreover, across $a_4$ modulo $\pi^4$ and $a_6$ modulo $\pi^6$ such that $E(a_4, a_6)$ is non-minimal, the choice of $(r,w)$ from their respective residue classes is unique, i.e., there are exactly $q^3$ (resp.~$q^4$) classes $(a_4,a_6) \pmod{\pi^6}$ of non-minimal models for $e=1$ (resp.~ $e\geq 2$).
\end{lemma}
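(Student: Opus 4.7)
The plan is to translate non-minimality into the existence of a suitable integral shift $x \mapsto x + s$, then extract the parametrization of non-minimal $(a_4, a_6)$ from the resulting divisibility conditions. Concretely, $E(a_4, a_6)$ is $\mathfrak{p}$-non-minimal if and only if there exists an integral $s$ such that the Weierstrass coefficients of the shifted curve satisfy $v_\pi(3s) \geq 2$, $v_\pi(3s^2 + a_4) \geq 4$, and $v_\pi(s^3 + a_4 s + a_6) \geq 6$, since these are precisely the conditions for the Step~11 scaling $(x, y) \mapsto (\pi^2 x, \pi^3 y)$ to produce an integral model.

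Next I would split into the cases $e = 1$ and $e \geq 2$. For $e = 1$, since $v_\pi(3) = 1$, the first condition forces $v_\pi(s) \geq 1$, so we may write $s = \pi r$ with $r$ integral. The second condition then reads $a_4 \equiv -3\pi^2 r^2 \pmod{\pi^4}$, which we parametrize by $a_4 = -3\pi^2 r^2 + \pi^4 w$ for some integral $w$. Substituting into the third condition and simplifying using the cancellation $\pi^3 r^3 - 3 \pi^3 r^3 = -2\pi^3 r^3$ yields $a_6 \equiv 2\pi^3 r^3 - \pi^5 rw \pmod{\pi^6}$. For $e \geq 2$ the first condition is automatic, so we may take $s = r$ directly, and the analogous substitution gives $a_4 \equiv -3r^2 + \pi^4 w \pmod{\pi^6}$ and $a_6 \equiv 2r^3 - \pi^4 rw \pmod{\pi^6}$, matching the stated exponents since $\max\{0, 4-2e\} = \max\{0, 6-3e\} = 0$ and $\max\{4, 6-e\} = 4$ for $e \geq 2$. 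The reverse implication in each case is then immediate: taking $s = \pi^{\min\{1, e-1\}} r$ and substituting verifies the three divisibilities, so $E(a_4, a_6)$ is non-minimal.

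The hard part will be proving uniqueness of the pair $(r, w)$ modulo $(\pi^{\min\{2,e\}}, \pi^2)$. The critical input is that $v_\pi(6) = e$, which follows from $v_\pi(3) = e$ and $\pi \nmid 2$. For $e = 1$, replacing $r$ by $r + \pi u$ alters $-3\pi^2 r^2$ by $-6\pi^3 ru - 3\pi^4 u^2$, both lying in $(\pi^4)$ since $v_\pi(6) = 1$; hence the class of $r$ modulo $\pi$ is uniquely recoverable from $a_4 \pmod{\pi^4}$. An analogous bookkeeping for $e \geq 2$, carrying an extra factor of $\pi$ in each term because $v_\pi(6) = e \geq 2$, shows that $r$ is uniquely determined modulo $\pi^2$. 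Once $r$ is fixed in its residue class, $w$ is uniquely determined modulo $\pi^2$ by $a_4 \pmod{\pi^6}$. Counting the pairs then gives $q \cdot q^2 = q^3$ non-minimal classes for $e = 1$ and $q^2 \cdot q^2 = q^4$ for $e \geq 2$, as claimed.
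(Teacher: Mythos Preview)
Your parametrization of non-minimal $(a_4,a_6)$ is correct and follows the paper's approach. There is one small unjustified step: you assert that non-minimality is equivalent to the existence of an $x$-shift $x\mapsto x+s$ alone satisfying the three divisibilities, but the converse direction requires knowing that the $y$-shifts in a general admissible change of variables can be discarded. The paper writes out all five conditions $\pi^i\mid a_i'$ for a shift $(x,y)\mapsto(x+R,\,y+Vx+U)$ and observes that, since $\pi\nmid 2$, the $a_1'$- and $a_3'$-conditions force $\pi\mid V$ and $\pi^3\mid U$, after which $V$ and $U$ disappear from the remaining three equations. This is routine but should be said. (Also, your formula $s=\pi^{\min\{1,e-1\}}r$ has the exponent reversed; it should be $\pi^{\max\{0,2-e\}}r$.)

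The genuine gap is in your uniqueness argument. You claim that $r\pmod{\pi}$ is recoverable from $a_4\pmod{\pi^4}$, but the computation you give only shows that the map $r\mapsto -3\pi^2 r^2\pmod{\pi^4}$ is \emph{well-defined} on residues modulo $\pi$, not that it is injective. In fact it is not injective: $a_4$ depends on $r$ only through $r^2$, so $r$ and $-r$ give the same $a_4$, and these are distinct modulo $\pi$ whenever $r\not\equiv 0$ (the residue field has characteristic $3\neq 2$). The same problem persists for $e\ge 2$. The paper instead recovers $r$ from the $a_6$-congruence: reducing modulo $\pi^4$ gives $a_6\equiv 2\pi^{\max\{0,6-3e\}}r^3$, and since $2$ is a unit and $x\mapsto x^3$ is the Frobenius on the characteristic-$3$ residue field, this determines $r\pmod{\pi}$; a short valuation argument (using $v_\pi(3)=e$) then upgrades this to $r\pmod{\pi^2}$ when $e\ge 2$. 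Only after $r$ is pinned down in this way do you read off $w\pmod{\pi^2}$ from $a_4\pmod{\pi^6}$.
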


\begin{proof}
Suppose that $E/K$ is not $\mathfrak{p}-$minimal. Throughout Step $1$ to $10$ of Tate's algorithm, we potentially translate $(x, y)$ in the original curve to $(x + R, y + V x + U).$ If the starting curve is non-minimal, we must reach Step $11$, and the new coefficients $a_i$ of the curve after Tate's algorithm must be divisible by $\pi^i$ for $i = 1,2,3,4,6$. Translating this into equations, the restrictions on $a_4, a_6, R, V, U$ are as follows:  
\begin{align}
    2V &\equiv 0 \pmod{\pi} \label{a1p3}\\ 
    3R - V^2 &\equiv 0 \pmod{\pi^2} \label{a2p3} \\
    2U &\equiv 0 \pmod{\pi^3} \label{a3p3} \\
    3R^2 + a_4 - 2UV &\equiv 0 \pmod{\pi^4} \label{a4p3} \\
    R^3 + a_4R + a_6 - U^2 &\equiv 0 \pmod{\pi^6}\label{a6p3}
\end{align}

Regardless of $\mathfrak{p}$, \Cref{a1p3} and \Cref{a3p3} imply that $V \equiv 0 \pmod{\pi}$ and $U \equiv 0 \pmod{\pi^3}$. As such, $U$ and $V$ vanish from the remaining equations.  

From \Cref{a2p3}, we have that $R \equiv 0 \pmod{\pi^{\max\{0,2-e\}} }.$ Therefore, suppose that $R = \pi^{\max\{0,2-e\}} r$ for some $\pi-$adic integer $r$. Then, from \Cref{a4p3}, we have $a_4 \equiv -3 \pi^{\max\{0,4-2e\}} r^2 \pmod{\pi^4}.$ We therefore write $a_4 = -3 \pi^{\max\{0,4-2e\}} r^2 + \pi^4 w.$ Then, \Cref{a6p3} is equivalent to $a_6 \equiv - R^3 - a_4 R \equiv 2\pi^{\max\{0,6-3e\}} r^3 - \pi^{\max\{4,6-e\}} rw \pmod{\pi^6}.$ 

To determine $(a_4 ,a_6)$ up to $\pmod{\pi^6},$ $r$ should be determined up to $\pmod{\pi^{\max\{3,e+1\}}}$ and $w$ should be determined up to $\pi^2$. Yet, we contend, in order for the map between $(a_4, a_6)$ $\pmod{\pi^6}$ and $(r, w)$ to be bijective, the residues $r$ and $w$ must be selected modulo $\pi^{\min\{2,e\}}$ and modulo $\pi^2$, respectively. To show injectivity, we note that the resulting $(a_4, a_6)$ $\pmod{\pi^6}$ from $(r,w)$ and $(r + k\pi^{\min\{2,e\}}, w + \frac{6}{\pi^{\min\{2,e\}}}\alpha k + 3k^2)$ are equivalent. To show surjectivity, suppose that for some $(r,w)$ and $(r', w'),$ the resulting $(a_4, a_6)$ are equivalent $\pmod{\pi^6},$ i.e.,  
    \begin{align}
        -3\pi^{\max\{0,4-2e\}} r^2 + \pi^4 w \equiv -3\pi^{\max\{0,4-2e\}} r'^2 + \pi^4 w' \pmod{\pi^6}; \label{e1p3cong1} \\
        2 \pi^{\max\{0,6-3e\}} r^3 - \pi^{\max\{4,6-e\}}rw \equiv 2 \pi^{\max\{0,6-3e\}} r'^3 - \pi^{\max\{4,6-e\}}r'w' \pmod{\pi^6}. \label{e1p3cong2}
    \end{align}
    
    From \Cref{e1p3cong2}, $r \equiv r' \pmod{\pi^{\min\{2,e\}}}.$ Then, from \Cref{e1p3cong1}, $w \equiv w' \pmod{\pi^2}$ as we had sought.  
\end{proof}

\begin{lemma} \label{p=2_lemma}
Let $\mathfrak{p}$ have ramification index $e$ over $(2)$. The curve $E(a_4,a_6)$ is not $\mathfrak{p}$-minimal if and only there exist residues $u \pmod{\pi^{\min\{3,e\}}}$, $v \pmod{\pi}$, and $w \pmod{\pi^2}$ for which
    \begin{equation*}
        a_4 \equiv 2\pi^{\max\{0,3-e\}} uv - 3v^4 + \pi^4 w \pmod{\pi^6}, \quad a_6 \equiv \pi^{\max\{0,6-2e\}} u^2 - v^6 - a_4 v^2 \pmod{\pi^6}.
    \end{equation*}
For each $(a_4,a_6)$, the choice of $(u,v,w)$ from their respective residue classes is unique, i.e., there are exactly $q^4$ (resp.~$q^5$ and $q^6$) classes $(a_4,a_6) \pmod{\pi^6}$ of non-minimal models for $e=1$ (resp.~ $e=2$ and $e\geq 3$).
\end{lemma}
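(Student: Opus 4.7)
The plan is to mirror the proof of Lemma \ref{p=3_lemma}. Specifically, $E(a_4, a_6)$ is non-$\mathfrak{p}$-minimal if and only if Tate's algorithm reaches Step 11, equivalently iff there exist $R, V, U \in \mathcal{O}_{K_v}$ such that under the translation $(x,y) \mapsto (x+R, y+Vx+U)$ the new Weierstrass coefficients $a_1' = 2V$, $a_2' = 3R - V^2$, $a_3' = 2U$, $a_4' = 3R^2 + a_4 - 2UV$, $a_6' = R^3 + a_4R + a_6 - U^2$ satisfy $\pi^i \mid a_i'$ for $i = 1, 2, 3, 4, 6$. This yields the system
\begin{equation*}
2V \equiv 0 \pmod{\pi}, \quad 3R - V^2 \equiv 0 \pmod{\pi^2}, \quad 2U \equiv 0 \pmod{\pi^3},
\end{equation*}
\begin{equation*}
3R^2 + a_4 - 2UV \equiv 0 \pmod{\pi^4}, \quad R^3 + a_4 R + a_6 - U^2 \equiv 0 \pmod{\pi^6}.
\end{equation*}

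The first step is to parametrize the admissible translations. Since $v_\pi(2) = e$ and $v_\pi(3) = 0$ at $\mathfrak{p} \mid (2)$, the first congruence is automatic, the third forces $U = \pi^{\max\{0, 3-e\}} u$ for some $u \in \mathcal{O}_{K_v}$, and the second gives $R \equiv 3^{-1} V^2 \pmod{\pi^2}$. Writing $V$ in terms of its residue $v$ modulo $\pi$ and $R = 3^{-1} v^2 + \pi^2 \rho$ (with $\rho$ tracking the remaining freedom), substitution into the fourth and fifth congruences expresses $a_4 \pmod{\pi^6}$ and $a_6 \pmod{\pi^6}$ as polynomial functions of $u, v$, and the residual parameters, where a final residue $w \pmod{\pi^2}$ captures the undetermined mod-$\pi^4$-to-$\pi^6$ digits of $a_4$. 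Carefully expanding $-3R^2$, $2UV$, $R^3$, and $U^2$ using $2 = \pi^e \sigma$ for a unit $\sigma$ should recover the claimed formulas $a_4 \equiv 2\pi^{\max\{0,3-e\}}uv - 3v^4 + \pi^4 w$ and $a_6 \equiv \pi^{\max\{0,6-2e\}} u^2 - v^6 - a_4 v^2$ modulo $\pi^6$.

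The final step is to verify bijectivity: distinct residue classes of $(u, v, w)$ modulo $(\pi^{\min\{3,e\}}, \pi, \pi^2)$ yield distinct $(a_4, a_6)$ modulo $\pi^6$, and every non-minimal class arises this way, giving the tally $q^{\min\{3,e\}} \cdot q \cdot q^2 \in \{q^4, q^5, q^6\}$ for $e = 1, 2, \geq 3$. The main obstacle is the case $e = 1$: since $\pi \mid 2$, the naive ansatz $R = v^2$ fails because $a_2' = 2v^2$ has insufficient $\pi$-valuation when $v$ is a unit. Instead, one uses $R \equiv 3^{-1} v^2 \equiv -v^2 \pmod{\pi^2}$ and tracks the cross-terms carrying extra factors of $\pi$ from the various $2 \cdot (\ldots)$ terms. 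Identities such as $8 \equiv 0 \pmod{\pi^{3e}}$ and $(u+\pi)^2 \equiv u^2 \pmod{\pi^{2e}}$ will be essential for confirming the well-definedness of the residue classes, and one must verify that the remaining mod-$\pi^6$ ambiguity is exactly absorbed by the $w$-parameter.
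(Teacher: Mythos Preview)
Your plan is essentially the paper's approach: write down the five divisibility conditions coming from $\pi^i \mid a_i'$ after a shift $(x,y)\mapsto(x+R,y+Vx+U)$, solve the first three for the shape of $(R,U,V)$, and read off $a_4,a_6$ from the last two. The one substantive difference is how $R$ is normalized. You parametrize $R = 3^{-1}v^2 + \pi^2\rho$ with $v \equiv V \pmod{\pi}$ and carry the residual parameter $\rho$ (and implicitly the higher $\pi$-adic digits of $V$) through the computation, planning to show they get absorbed into $u$ and $w$. The paper instead observes a gauge redundancy in the translation data: the triples $(R,U,V)$ and $(R+k\pi^2,\, U+k\pi^2 V,\, V)$ produce the same $(a_4,a_6)\pmod{\pi^6}$, so one may simply fix $R=-V^2$ exactly. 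This kills your $\rho$ outright and makes the formulas $a_4 \equiv 2UV - 3V^4 + \pi^4 w$, $a_6 \equiv U^2 + V^6 + a_4 V^2$ immediate, with no $3^{-1}$ and no cross-terms from $\rho$ to chase. Your route works, but the gauge-fixing step is the cleaner way to reach the same destination; both proofs then defer the bijectivity check (that only $V\bmod\pi$, $u\bmod\pi^{\min\{3,e\}}$, $w\bmod\pi^2$ matter) to an argument parallel to Lemma~\ref{p=3_lemma}.
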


\begin{proof}
Suppose that $E$ is not $\mathfrak{p}-$minimal in $K$. Following the same steps as in the proof of \Cref{p=3_lemma}, we have \Cref{a1p3}, \Cref{a2p3}, \Cref{a3p3}, \Cref{a4p3}, \Cref{a6p3} as restrictions on $R, U, V$ and $a_4$, $a_6$. From here, we check that $(R,U,V)$ and $(R+k\pi^2, U+k\pi^2V, V)$ give rise to the same $(a_4,a_6)$ modulo $\pi^6$. Hence, by choosing a suitable value of $k$, we assume $R = -V^2$.

 To begin, \Cref{a3p3} yields $U \equiv 0 \pmod{\pi^{\max\{0,3-e\}}}$. Therefore, we suppose that $U = \pi^{\max\{0,3-e\}}u$ for some $\pi$-adic integer $u$. From \Cref{a4p3}, we get $a_4 = 2\pi^{\max\{0,3-e\}} uV - 3V^4$, whence we write $a_4 = (2\pi^{\max\{0,3-e\}} uV - 3V^4) + \pi^4 w$ for some $\pi$-adic integer $w$. Finally, \Cref{a6p3} gives $a_6 \equiv \pi^{\max\{0,6-2e\}} u^2 - V^6 - V^2a_4 \pmod{\pi^6}$. 
    
By the analogous reasoning as in the proof of \Cref{p=3_lemma}, it can be shown that selecting $u, v, w$ as representatives modulo $\pi^{\min\{3,e\}}$, $\pi$, and $\pi^{2}$ respectively forms a bijective map between $u, v, w$ and $(a_4, a_6)$ as we had sought. 
\end{proof}

\nocite{*}
\bibliographystyle{acm} 
\bibliography{tamagawa} 
\medskip
}
  
\end{document}